\newfont{\bb}{msbm10 at 11pt}
\newfont{\bbsmall}{msbm8 at 8pt}
\def\cL{\cal{L}}
\def\ov{\overline}
\def\rth{\mathbb{R}^3}
\def\R{\mathbb{R}}
\def\B{\mathbb{B}}
\def\N{\mathbb{N}}
\def\C{\mathbb{C}}
\def\D{\mathbb{D}}
\def\Pe{\mathbb{P}}
\def\esf{\mathbb{S}}
\def\cM{\mathcal{M}}
\def\cD{\mathcal{D}}
\def\cC{\mathcal{C}}
\def\cH{\mathcal{H}}
\newcommand{\wh}{\widehat}
\newcommand{\Int}{\mbox{\rm Int}}
\def\a{{\alpha}}
\def\lc{{\cal L}}
\def\g{{\gamma}}
\def\G{{\Gamma}}
\def\de{{\delta}}
\def\be{{\beta}}
\def\ve{{\varepsilon}}
\newtheorem{theorem}{Theorem}[section]
\newtheorem{lemma}[theorem]{Lemma}
\newtheorem{proposition}[theorem]{Proposition}
\newtheorem{remark}[theorem]{Remark}
\newtheorem{definition}[theorem]{Definition}
\newtheorem{conjecture}[theorem]{Conjecture}
\newtheorem{assertion}[theorem]{Assertion}
\newenvironment{proof}{\smallskip\noindent{\it Proof.}\hskip \labelsep}
                          {\hfill\penalty10000\raisebox{-.09em}{$\Box$}\par\medskip}
\newcommand{\ed}{\end{document}}
\begin{document}
 \begin{title}
{Bounds on the topology and index of  minimal surfaces }
\end{title}

\vskip .5in

\begin{author}
{William H. Meeks III\thanks{This material is based upon
 work for the NSF under Award No. DMS -
  1309236. Any opinions, findings, and conclusions or recommendations
 expressed in this publication are those of the authors and do not
 necessarily reflect the views of the NSF.}
 \and Joaqu\'\i n P\' erez
\and Antonio Ros\thanks{Research of the second and third authors partially supported  by
MINECO/FEDER grants no. MTM2014-52368-P and MTM2017-89677-P.}}

\end{author}

\maketitle

\begin{abstract} We prove that for every nonnegative integer $g$, there
exists a bound on the number of ends of a complete, embedded minimal surface
$M$ in $ \R^3$ of genus $g$ and finite topology. This bound on the finite number of
ends when $M$ has at least two ends  implies that $M$ has finite stability index
which is  bounded by a constant that only depends on its genus.
\vspace{.17cm}

\noindent{\it Mathematics Subject Classification:} Primary 53A10,
 Secondary 49Q05, 53C42

\noindent{\it Key words and phrases:} Minimal surface,
index of stability, curvature estimates, finite total curvature, minimal
  lamination, removable singularity.
 \end{abstract}

\section{Introduction}
\label{secintrod} Let ${\cal M}$ be the space of connected, properly
embedded minimal surfaces in $\R^3$. The focus of this paper is to
prove the existence of an upper bound on the number of ends for a
surface $M\in {\cal M}$ having finite topology, solely in terms of
the genus of $M$. In the case that $M$ has more than one end, this topological bound
also produces an upper bound for the index of stability of $M$.

There are three classical
conjectures that attempt to describe the topological types of the surfaces
occurring in ${\cal M}$.

\begin{conjecture}[Finite Topology Conjecture I, Hoffman-Meeks] \label{c1.1}
A non-compact orientable surface with finite genus $g$ and a finite
number of ends $k>2$ occurs as the topological type of an example in
${\cal M}$ if and only if $k \leq g+2$.  A minimal surface in ${\cal
M}$ with finite genus and two ends has genus zero and is a catenoid.
\end{conjecture}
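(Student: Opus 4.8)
The plan is to split the conjecture into its two-ended assertion and its assertion for $k>2$ ends, since these rest on different tools. For the two-ended case: by Collin's theorem, a properly embedded minimal surface in $\R^3$ of finite topology with more than one end has finite total curvature, so a two-ended $M\in\mathcal{M}$ of finite genus is a complete embedded minimal surface of finite total curvature with exactly two ends; Schoen's characterization of the catenoid (equivalently, a L\'opez--Ros-type deformation argument) then forces $M$ to be a catenoid, which has genus zero. So this half of the statement follows from results already in the literature.

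For $k>2$ ends, Collin's theorem again gives finite total curvature, so $M=\overline{M}\setminus\{p_1,\dots,p_k\}$ with $\overline{M}$ compact of genus $g$, all ends embedded and parallel, and total curvature $-4\pi(g+k-1)$ by the Jorge--Meeks formula; everything therefore reduces to bounding $k$ in terms of $g$. The first step is the \emph{qualitative} bound $k\le\mathcal{C}(g)$ established in this paper: were it false, there would be a sequence $M_n\in\mathcal{M}$ of fixed genus $g$ with $k_n\to\infty$ ends, and after normalizing the scale one would pass to a limit using the curvature estimates for embedded minimal surfaces, minimal-lamination convergence, and removable-singularity results; the heuristic to be made precise is that each additional end is, up to scale, a catenoidal neck that must be accompanied by a definite amount of topology or of curvature concentration nearby, while the genus is a fixed global budget spendable at only finitely many scales --- a contradiction. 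Granting $\mathcal{C}(g)<\infty$, sharpening it to $g+2$ demands a finer analysis of the limit configuration: one must use the fluxes and the signs of the logarithmic growths of the $k$ parallel ends to convert the combinatorics of a ``stack'' of catenoidal ends into a Gauss--Bonnet/linking lower bound of $k-2$ on the number of handles --- the value realized by the Costa--Hoffman--Meeks surfaces and their generalizations.

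For the converse (``if'') direction one must exhibit, for every $g\ge 1$ and every $k$ with $3\le k\le g+2$, a surface in $\mathcal{M}$ of genus $g$ with exactly $k$ ends; I would construct these by desingularization or gluing, fusing $k$ nearly-coaxial catenoids through $g$ suitably placed handles in the spirit of the Costa--Hoffman--Meeks and Kapouleas constructions, and then verify embeddedness and the topological count.

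The main obstacle lies squarely in the $k>2$ case: first, ruling out in the degeneration argument a curvature blow-up that ``does not consume genus''; and second, upgrading the resulting finite bound $\mathcal{C}(g)$ to the sharp value $g+2$ by extracting quantitative topological information from the asymptotic geometry of the ends. That is the step I expect to resist a short proof; the existence half, while technical, is comparatively well understood.
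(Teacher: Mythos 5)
The statement you are asked about is a \emph{conjecture} in this paper, not a theorem: the paper does not prove it, and neither does your proposal. What the paper actually establishes (Theorem~\ref{thm1}) is only the qualitative statement that the number of ends $k$ of a finite-topology surface in ${\cal M}$ is bounded by some constant depending on the genus $g$; the sharp inequality $k\leq g+2$ and the existence of examples for every pair $(g,k)$ with $3\leq k\leq g+2$ remain open. Your treatment of the two-ended sentence is fine (Collin plus Schoen/L\'opez--Ros does give the catenoid), and your sketch of the qualitative bound is in the spirit of the paper's argument, but it is far too compressed to stand on its own: the paper's proof is not a one-step compactness argument in a single scale, but an inductive procedure that repeatedly rescales at the smallest scale of non-trivial topology, identifies the limit via the lamination/parking-garage structure theorem (Theorem~\ref{structurethm}), performs surgery replacing compact pieces by disks, and only after the fixed genus has been exhausted concludes that all later limits are catenoids; the final contradiction then comes from showing many of the resulting waist circles separate $M(n)$, forcing vertical fluxes, and applying the L\'opez--Ros deformation to a planar subdomain. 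None of the mechanisms that rule out, e.g., parking-garage degeneration or non-separating necks appear in your outline.

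The two decisive gaps are exactly the steps you flag as ``to be made precise.'' First, your proposed upgrade from a finite bound ${\cal C}(g)$ to the sharp bound $g+2$ via ``fluxes, logarithmic growths, and a Gauss--Bonnet/linking lower bound of $k-2$ on the number of handles'' is not a known argument and no such proof exists; the Jorge--Meeks formula and the asymptotics of parallel ends simply do not yield a handle count of this kind, which is why the sharp inequality is still conjectural. Second, the ``if'' direction is also open: the paper explicitly notes that the Weber--Wolf construction falls short of proving embeddedness, and the Kapouleas/Traizet-type gluings do not realize all topological types with $3\leq k\leq g+2$, so ``fusing $k$ nearly-coaxial catenoids through $g$ handles and verifying embeddedness'' is a program, not a proof. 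In short, your proposal conflates the theorem this paper proves with the much stronger conjecture it is motivated by.
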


\begin{conjecture}[Finite Topology Conjecture II, Meeks-Rosenberg] \label{c1.2}
For every positive integer $g$, there exists
a $\Sigma _g\in {\cal M}$ with one end
and genus $g$, which is unique up to congruences
and homotheties. Furthermore, if $g=0$,
such a $\Sigma _g$ is a plane or a helicoid.
\end{conjecture}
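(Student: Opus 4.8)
The plan is to split Conjecture \ref{c1.2} into three tasks and handle them very differently: (i) for each $g\ge 1$, produce a properly embedded, one-ended minimal surface $\Sigma_g\in\mathcal{M}$ of genus $g$; (ii) establish the genus-zero characterization; and (iii) prove \emph{uniqueness} up to ambient isometry and homothety. Tasks (i) and (ii) I would settle by assembling known machinery; the genuine work, and essentially all of the difficulty, is in (iii).

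For (ii): a complete, properly embedded, genus-zero surface with one end is conformally a once-punctured sphere, hence simply connected, so it suffices to show that a complete, properly embedded, simply connected, non-planar minimal surface $M$ in $\R^3$ is a helicoid. Here I would follow Meeks and Rosenberg. Since $M$ is properly embedded and simply connected it is recurrent for Brownian motion, hence conformally parabolic and therefore conformally $\C$, so its Weierstrass data consist of an entire Gauss map $g$ and a holomorphic height differential $dh$. The Colding-Minicozzi theory of embedded minimal disks --- the description of such disks with large curvature as double spiral staircases, together with the one-sided curvature estimate and a removable-singularity theorem for minimal laminations --- then forces $dh$ to be nowhere zero and $g$ to omit $0$ and $\infty$; analyzing the behaviour at the puncture pins $g$ and $dh$ down to be exactly those of a helicoid, and integrating the Weierstrass representation finishes the case.

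For (i): when $g=1$ I would invoke the Hoffman-Weber-Wolf construction of the embedded genus-one helicoid, obtained by deforming a singly periodic Karcher-type surface and solving the resulting finite-dimensional period problem via the implicit function theorem while monitoring embeddedness along the deformation; for general $g$ I would invoke the Hoffman-Traizet-White construction, which realizes a genus-$g$ helicoid as a limit of compact embedded minimal surfaces in flat $3$-tori (equivalently, by desingularizing a stack of $g$ catenoidal necks threaded along a helicoid), with the prescribed genus controlled by a degree/transversality argument.

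The main obstacle is (iii), and for $g\ge 1$ it remains open. The approach I would pursue is a moduli-space argument. First, use the structure theory for complete embedded minimal surfaces of finite genus with one end --- such a surface is conformally a once-punctured compact genus-$g$ Riemann surface, has finite type, and, after a rigid motion, is asymptotic to the standard helicoid and invariant under rotation by $\pi$ about a horizontal line --- to normalize every such surface; call $\mathcal{K}_g$ the resulting set. Second, show $\mathcal{K}_g$ is a finite-dimensional real-analytic set with local coordinates given by the Weierstrass data on the genus-$g$ surface subject to the period, puncture-asymptotics, and embeddedness constraints. Third, prove $\mathcal{K}_g$ is connected --- joining an arbitrary element to the Hoffman-Traizet-White example through a path of embedded genus-$g$ helicoids --- and prove that the constraint map is everywhere submersive, i.e. that such a $\Sigma$ carries no nontrivial bounded Jacobi field compatible with the normalization; connectedness together with this local rigidity then forces $\mathcal{K}_g$ to be a single point. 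The hard part will be exactly this non-degeneracy of the linearized period/embeddedness problem --- a Shiffman-type rigidity for genus-$g$ helicoids --- together with the connectedness of $\mathcal{K}_g$; note that the genus-zero instance of this rigidity is precisely what the Colding-Minicozzi argument in (ii) provides.
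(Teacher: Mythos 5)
There is a genuine gap, and it is unavoidable: the statement you are addressing is stated in the paper as a \emph{conjecture}, and the paper itself offers no proof of it. The paper only records that the existence part has recently been settled by Hoffman, Traizet and White, and that the works of Bernstein--Breiner, Meeks--P\'erez, Weber--Traizet, Hoffman--White and Hoffman--Weber--Wolf ``provide the theory on which a proof of the uniqueness part \emph{might} be based.'' Your proposal mirrors this state of affairs rather than resolving it: parts (i) and (ii) are indeed assemblies of known results (Hoffman--Traizet--White for existence of genus-$g$ helicoids, and the Meeks--Rosenberg uniqueness of the plane and helicoid among properly embedded simply connected surfaces for the $g=0$ clause, using that a one-ended genus-zero surface is simply connected), but part (iii), which is the actual content of the conjecture for $g\ge 1$, is explicitly left open in your own write-up. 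A program is not a proof: the two pillars you identify --- connectedness of the normalized moduli space $\mathcal{K}_g$ and the Shiffman-type non-degeneracy (no nontrivial bounded Jacobi field compatible with the normalization) --- are precisely the unresolved difficulties that make this a conjecture, and nothing in your sketch indicates how either would be established.

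There is also a smaller but real issue inside your setup for (iii): in normalizing $\mathcal{K}_g$ you assume that every properly embedded, one-ended, genus-$g$ minimal surface is, after a rigid motion, invariant under rotation by $\pi$ about a horizontal line. The structure results you may legitimately invoke (Bernstein--Breiner, Meeks--P\'erez) give conformality to a once-punctured compact Riemann surface and asymptotics to a helicoid, but the $\pi$-rotational symmetry is a feature of the constructed examples, not a known property of an arbitrary surface in this class; building it into the definition of $\mathcal{K}_g$ silently strengthens the hypotheses. In summary: your proposal correctly locates the known ingredients and the open core, but it does not constitute a proof of the statement, and indeed no proof exists in the paper to compare it against.
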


\begin{conjecture}[Infinite Topology Conjecture, Meeks]  \label{c1.3}
\mbox{}\\
A non-compact orientable surface of infinite topology occurs as the
topological type of an example in ${\cal M}$ if and only if it has
at most two limit ends, and when it has one limit end, then its
limit end has infinite genus.
\end{conjecture}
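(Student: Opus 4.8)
\medskip
\noindent\textbf{Towards Conjecture \ref{c1.3}.} Since the conjecture is an ``if and only if'' characterization of which infinite-topology surfaces are realized in ${\cal M}$, I would treat the two implications by entirely different machinery: structure theory for the \emph{necessity} of the topological restrictions, and gluing/desingularization for their \emph{sufficiency}. It should be said at the outset that a complete proof is not within reach of current tools; what follows is the strategy and where I expect it to break down.

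For necessity, let $M\in{\cal M}$ have infinite topology. First I would bound the number of limit ends by two: by the Ordering Theorem the ends of $M$ carry a natural linear order (relative heights, read off from the asymptotic behavior together with a maximum principle/flux argument), and the theorem that there are no ``middle'' limit ends forces every limit end to be the top or the bottom element of this order. Next, suppose toward a contradiction that $M$ has exactly one limit end $e$ and \emph{finite} genus. Then outside a large ball $M$ has zero genus, so the component $E$ representing $e$ is an infinitely connected planar-domain end. Rescaling smaller and smaller neighborhoods of $e$ and extracting a limit produces a minimal lamination of a halfspace or slab in $\R^3$ whose singular set is controlled by curvature estimates; its leaves, being complete and (eventually) stable or of bounded curvature, are flat, and feeding this back together with the classification of properly embedded minimal planar domains shows that $E$ is asymptotic to a half of a Riemann minimal example. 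But the Riemann examples have \emph{two} limit ends, not one --- a contradiction. Hence a one-limit-end surface in ${\cal M}$ must carry infinite genus near that end.

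For sufficiency, given a non-compact orientable surface $T$ of infinite topology satisfying the two constraints, I would realize it by a construction adapted to its end structure. If $T$ has two limit ends and genus zero it is the topological type of a Riemann minimal example. If $T$ has two limit ends and positive (finite or infinite) genus, desingularize a Riemann example along an appropriate periodic or slowly spreading configuration of handles, in the spirit of Kapouleas/Traizet gluing. If $T$ has finitely many ends but infinite genus, build it as a limit of genus-$g$ helicoids (or genus-$g$ two-ended surfaces) as $g\to\infty$. Finally, if $T$ has exactly one limit end --- necessarily of infinite genus by hypothesis --- attach an infinite tower of handles to a helicoid, running the parametric gluing in exponentially weighted H\"older spaces so that the accumulated errors from infinitely many handles remain summable and the handles neither escape to infinity nor collapse onto the limit end.

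The main obstacle appears in both directions and is the same phenomenon: the \emph{interaction of infinite genus with a limit end}. On the necessity side, the stability and area-growth estimates that cleanly bound limit ends in the finite-genus setting degrade when arbitrarily many handles are present, since handles can absorb total curvature; the crux is to prove that accumulating infinite genus near the end is the \emph{only} way to evade these estimates --- equivalently, to rule out a finite-genus surface with exactly one limit end. On the sufficiency side, the difficulty is the non-compactness of the gluing: there is no a priori scale on which the handles must be placed, so the usual contraction-mapping argument has to be carried out on an unbounded domain with carefully tuned weights, and one must separately preclude the handle necks from degenerating in the limit.
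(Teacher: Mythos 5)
The statement you are asked about is not a theorem of this paper at all: it is stated as an open conjecture (the Infinite Topology Conjecture), and the paper offers no proof of it --- it only records the known partial results, namely that a surface in ${\cal M}$ cannot have more than two limit ends (Collin--Kusner--Meeks--Rosenberg) and that a surface in ${\cal M}$ with exactly one limit end cannot have finite genus (the authors' own earlier work), together with the classification of the finite-genus, two-limit-end case in terms of Riemann minimal examples. Your ``necessity'' half is essentially a sketch of these already-published theorems, and as such it is acceptable in outline (the ordering/no-middle-limit-ends argument for the two-limit-end bound, and a lamination/flux analysis for excluding finite genus with one limit end), though your specific route through ``the end is asymptotic to half of a Riemann example'' is looser than the actual arguments in the cited papers and would need the full machinery there to be made rigorous.

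The genuine gap is the ``sufficiency'' half, which is precisely the open content of the conjecture, and your proposal does not close it --- as you yourself acknowledge. Saying ``desingularize a Riemann example along a configuration of handles'' or ``attach an infinite tower of handles to a helicoid, running the gluing in exponentially weighted H\"older spaces'' names a hoped-for construction, not an argument: no existing gluing scheme produces complete embedded minimal surfaces with infinitely many handles accumulating at a limit end, and the obstructions you flag (non-compactness of the perturbation problem, absence of a forced scale for the handles, possible neck degeneration, loss of a uniform spectral gap for the linearized operator on an unbounded domain with unbounded genus) are exactly the reasons the conjecture is still open. Moreover, even the finite-genus sufficiency cases you invoke (two limit ends with positive finite genus, finitely many ends with infinite genus) are only partially realized in the literature (e.g.\ Hauswirth--Pacard give some finite-genus, two-limit-end examples), so the claimed realization of \emph{every} admissible topological type is unsupported. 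In short: there is no paper proof to compare against, and your text is a research program with an openly admitted missing core, not a proof of the statement.
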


For a detailed discussion of these conjectures and related results,
we refer the reader to the survey by Meeks and P\'erez~\cite{mpe2}.
However, we make here
a few brief comments on what is known concerning
these conjectures and which will be used in the proof of the main
theorem of this paper.

Regarding Conjecture~\ref{c1.1}, a theorem by Collin~\cite{col1}
states that
if $M\in{\cal M}$ has finite topology and at least two ends, then
$M$ has finite total Gaussian curvature. This result implies that
such a surface is conformally a compact Riemann surface
$\overline{M}$ punctured in a finite number of points and $M$ can be
defined in terms of meromorphic data on its conformal
compactification $\overline{M}$ (Osserman~\cite{os1}). Collin's Theorem reduces the
question of finding topological obstructions for surfaces in~${\cal M}$ of finite
topology and more than one end to the question of finding topological
obstructions for complete, embedded minimal surfaces of finite total
curvature in~$\R^3$. For example, if $M$ is a complete, embedded
minimal surface in $\R^3$ with finite total curvature, genus $g$ and
$k$ ends, then $M$ is properly embedded in $\R^3$ and the
Jorge-Meeks formula~\cite{jm1} calculates its total curvature to be
$-4\pi (g+k-1)$. The first topological obstructions for complete,
embedded minimal surfaces $M$ of finite total curvature were given
by Jorge and Meeks~\cite{jm1}, who proved that if $M$ has genus
zero, then $M$ does not have $3,4$ or $5$ ends. Later this result
was generalized by L\'opez and Ros~\cite{lor1} who proved that the
plane and the catenoid are the only genus-zero minimal surfaces of
finite total curvature in ${\cal M}$. At about the same time,
Schoen~\cite{sc1} proved that a complete, embedded minimal surface
of finite total curvature and two ends must be a catenoid.

The existence theory for properly embedded minimal surfaces with
finite total curvature was begun by Costa~\cite{co2} and by Hoffman and
Meeks~\cite{hm7}, with important theoretical advances by Kapouleas~\cite{kap4}
and  Traizet~\cite{tra1}.  A paper by Weber and
Wolf~\cite{ww1} makes the existence assertion in Conjecture~\ref{c1.1}
 seem likely to hold, although their results actually fall short of giving a
proof of embeddedness for their examples.

Concerning Conjecture~\ref{c1.2}, a theorem by Meeks and Rosenberg~\cite{mr8}
states that the plane and the helicoid are the only properly
embedded, simply connected minimal surfaces in $\R^3$. They also
claimed that if $M\in {\cal M}$ has finite positive  genus and just
one end, then it is asymptotic to the end of the helicoid and can be
defined analytically in terms of meromorphic data on its conformal
completion, which is a closed Riemann surface; the proof of this result
was given
by Bernstein and Breiner~\cite{bb2} and a  related
more general result was proved by
Meeks and P\'erez~\cite{mpe3}. These theoretical
results together with theorems developed by Weber and
Traizet~\cite{tw1}, by Hoffman and White~\cite{hofw1}
and by Hoffman,
Weber and Wolf~\cite{hweb1} provide the theory on which a proof of
the uniqueness part of Conjecture~\ref{c1.2} might be based.
The existence part of Conjecture~\ref{c1.2} has been
recently solved by Hoffman,
Traizet and White~\cite{htw1}.

In relation to Conjecture~\ref{c1.3},
there are two important topological obstructions for
surfaces in ${\cal M}$ with infinite topology:
Collin, Kusner, Meeks and Rosenberg~\cite{ckmr1} proved that an
example in ${\cal M}$ cannot have more than two limit ends,
and in~\cite{mpr4}
we proved that an example in ${\cal M}$ with one limit end cannot
have finite genus. This last result depends on Colding-Minicozzi
theory as well as on our previous
results in~\cite{mpr3} where we presented
a descriptive theorem for minimal surfaces in ${\cal M}$ with two limit
ends and finite genus. The study of two limit end minimal surfaces is motivated by
a one-parameter family of periodic examples of genus zero discovered by
Riemann~\cite{ri1}, which are defined in terms of elliptic functions on rectangular
elliptic curves. In~\cite{mpr6} we showed
that if $M\in \cM$ has  genus zero and infinite topology, then $M$ is one
of the Riemann minimal examples, and if $M$ has finite genus and infinite topology,
then  each of its two  limit ends
are in a natural sense asymptotic to the end of a Riemann minimal example.
It is worth mentioning that Hauswirth and Pacard~\cite{hauP1} have
produced examples in ${\cal M}$ with finite genus and infinite topology.

{\it A priori,} one procedure to obtain surfaces in ${\cal M}$ with finite genus and
infinite topology might be to take  limits of sequences of
finite total curvature examples in ${\cal M}$ with a bound
on their genus but with a strictly increasing number of ends. Our results
in~\cite{mpr11,mpr3,mpr4,mpr10} are crucial in
understanding that such sequences cannot exist,
and they will lead us to
a proof of the
following main theorem of this manuscript.

\begin{theorem}
\label{thm1} A properly embedded minimal surface in $\R^3$ with finite topology has
a bound on the number of its ends that only depends on its genus.
\end{theorem}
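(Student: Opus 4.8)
The plan is to argue by contradiction: reduce to complete embedded minimal surfaces of finite total curvature via Collin's Theorem, extract a limit lamination of a hypothetical sequence of counterexamples, and then contradict the known topological obstructions for surfaces in ${\cal M}$ of finite genus and infinite topology.

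\emph{Reduction.} If $M\in{\cal M}$ has finite topology and a single end, then $k=1$ and there is nothing to prove, so it suffices to bound the number of ends when $M$ has at least two ends. In that case Collin's Theorem \cite{col1} gives that $M$ has finite total curvature, and the Jorge--Meeks formula \cite{jm1} identifies this total curvature as $-4\pi(g+k-1)$, where $g$ is the genus and $k$ the number of ends. Hence Theorem~\ref{thm1} is equivalent to the statement that there is a function $f\colon\Z_{\ge0}\to\Z_{\ge0}$ such that every complete, embedded minimal surface in $\R^3$ of finite total curvature and genus $g$ has at most $f(g)$ ends. Suppose this fails for some $g$. Then there is a sequence $M_n\in{\cal M}$ of complete, embedded minimal surfaces of finite total curvature, of genus $g_n\le g$ and with $k_n\to\infty$ ends; passing to a subsequence we may assume $g_n$ is a constant $g'\le g$, so the total curvature of $M_n$ equals $-4\pi(g'+k_n-1)\to-\infty$. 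Since the ends of a complete embedded minimal surface of finite total curvature are asymptotic to a common plane or to half-catenoids with a common axis, after a rotation every $M_n$ has all of its ends horizontal; in particular, the flux vector of every $1$-cycle in $M_n$ is vertical, being a sum of the vertical flux vectors of the ends it separates.

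\emph{Extracting the limit.} This is the heart of the argument. Normalizing each $M_n$ by a translation and a homothety, we arrange that a fixed positive amount of total curvature is contained in the unit ball $B(1)$ while the region of $M_n$ carrying topology stays within a ball of controlled radius. Using the local picture theorems on the scales of curvature and of topology from \cite{mpr10,mpr11}, together with the curvature estimates and quasiperiodicity for minimal surfaces of finite genus from \cite{mpr3} (which bound, in terms of $g$, how the geometry can concentrate), one shows that a subsequence of the normalized surfaces converges, away from a locally finite singular set which is removable by the removable-singularity theorem for minimal laminations, to a nonflat minimal lamination ${\cal L}$ of $\R^3$ whose leaves have genus at most $g$. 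Because $k_n\to\infty$ and the genus is bounded, the normalized surfaces acquire an unbounded number of catenoidal necks inside $B(1)$, so the limit object carries infinite topology; taking the leaf $M_\infty$ of ${\cal L}$ through the normalization point, $M_\infty$ is a properly embedded minimal surface of finite genus at most $g$ and infinite topology. Moreover $M_\infty$ is a smooth limit of the $M_n$ on compact sets, so every $1$-cycle of $M_\infty$ has vertical flux, being a limit of vertical flux vectors.

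\emph{Closing the argument.} Now apply the obstructions. By Collin--Kusner--Meeks--Rosenberg \cite{ckmr1}, $M_\infty$ has at most two limit ends; by \cite{mpr4} it cannot have exactly one limit end, since it has finite genus; hence $M_\infty$ has exactly two limit ends and finite genus, and by \cite{mpr3} (see also \cite{mpr6}) each of its limit ends is asymptotic to an end of a Riemann minimal example. Such a surface carries $1$-cycles whose flux is not vertical --- a defining feature of the Riemann examples that is inherited from the asymptotic description --- contradicting the vertical-flux conclusion of the previous paragraph. This contradiction shows that no sequence $M_n$ as above exists, and proves Theorem~\ref{thm1}. The main obstacle is the middle step: producing and controlling the limit lamination. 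One must rule out uncontrolled degeneration of the rescaled $M_n$ --- identifying the blow-up limits along sequences of points of large curvature (planes with multiplicity, helicoids, catenoids, or Riemann-type surfaces), showing the singular set of the limit lamination is removable, and, crucially, checking that the bound $g$ on the genus survives in the limit while the divergence $k_n\to\infty$ forces infinite topology on the limit leaf. This is exactly where the substantive input of \cite{mpr10,mpr11,mpr3,mpr4} --- local pictures on the scales of curvature and topology, curvature estimates and quasiperiodicity for finite genus, and the structure theory of two-limit-ended examples of finite genus --- is needed.
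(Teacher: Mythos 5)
Your reduction via Collin's theorem matches the paper, but the heart of your argument --- the ``Extracting the limit'' step --- contains a genuine gap, and it is precisely the step the paper spends most of its length on. You assert that one can normalize by a single translation and homothety so that ``a fixed positive amount of total curvature is contained in the unit ball while the region carrying topology stays within a ball of controlled radius,'' and that since $k_n\to\infty$ the normalized surfaces ``acquire an unbounded number of catenoidal necks inside $B(1)$,'' yielding a limit leaf $M_\infty$ of finite genus and infinite topology. Nothing in \cite{mpr10,mpr11,mpr3} gives this: as $k_n\to\infty$ the extra topology typically disperses over unboundedly many disjoint scales and locations, so no single rescaling confines it to a fixed ball, and if curvature/topology did concentrate in $B(1)$ the limit could just as well be a foliation by planes with a parking-garage singular set rather than a leaf of infinite topology. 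Indeed, the paper's analysis shows the opposite of what you assume: after rescaling at the smallest scale of nontrivial topology, the parking-garage case and the two-limit-end case are \emph{ruled out} (Lemmas~\ref{lemma2} and~\ref{lemma3}), and the limit at each scale is a surface of finite topology (helicoidal one-ended or finite total curvature). The actual proof then proceeds by an inductive hierarchy: surgery replacing the captured topology by disks, re-rescaling at the next smallest scale, extracting infinitely many limits $L_k$ which for large $k$ must be catenoids (since the genus $g$ can only be spent finitely often), producing arbitrarily many pairwise disjoint almost-catenoidal necks in $M(n)$, proving via Plateau-barrier arguments that most of their waist circles separate $M(n)$ (Lemma~\ref{lemma6}), and concluding with the L\'opez--Ros deformation. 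None of this machinery is replaced by anything in your sketch; you acknowledge the obstacle but do not overcome it.

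A second, independent error: your claim that ``the flux vector of every $1$-cycle in $M_n$ is vertical'' is false. Only cycles homologous to a union of end cycles (e.g.\ separating curves) have flux equal to a sum of end fluxes; the $2g$ handle generators of $H_1(M_n)$ need not have vertical flux. This matters for your closing argument: the non-vertical-flux cycles of a two-limit-end surface of finite genus are separating curves of the limit, and a curve separating the limit $M_\infty$ need not be (or be homologous in $M_n$ to) a separating curve of $M_n$, so you cannot conclude its flux is a limit of vertical vectors. Establishing that suitable approximating curves do separate $M(n)$ is exactly the content of Assertion~\ref{ass3.2} and Lemma~\ref{lemma6} in the paper (via properly embedded barrier surfaces built from least-area surfaces and disks), and it is where the real work lies; your proposal assumes it for free. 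So while your closing flux obstruction is in the same spirit as the paper's use of Theorem~6 of \cite{mpr3} in Lemma~\ref{lemma2}, both the existence of the infinite-topology limit and the verticality of the relevant fluxes are unproven in your outline.
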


Colding and Minicozzi~\cite{cm35} proved that a
complete, embedded minimal surface of finite topology in $\R^3$ is
properly embedded. In particular,
the conclusion of Theorem~\ref{thm1} remains valid if we weaken the
hypothesis of properness to the hypothesis of completeness.

By a theorem of Fischer-Colbrie~\cite{fi1}, a complete, immersed, orientable minimal
surface $M$ in $\R^3$ has finite index of stability if and only if it has finite total
curvature. The index of such an $M$ is equal to the index of the Schr\"{o}dinger
operator $L=\Delta +\| \nabla N\| ^2$ associated to the meromorphic extension of the
Gauss map $N$ of $M$ to the compactification of $M$ by attaching its ends.
Grigor'yan, Netrusov and Yau~\cite{gny1}
made an in depth study of the relation between the
degree of the Gauss map and the index of a
complete minimal surface of finite total curvature.
In particular, they proved that the index of a
complete, embedded minimal surface with $k$ ends is bounded from below
 by $k-1$.
On the other hand, Tysk~\cite{ty} proved
that the stability index of $L$ can be explicitly
bounded from above in terms of the degree of $N$.
By the Jorge-Meeks formula for such an
embedded $M$, the degree of $N$ equals $g+k-1$,
where $g$ is the genus and $k$ is the
number of ends. Hence by Theorem~\ref{thm1}, if $g$ is
fixed, then $k$ is bounded for an
embedded $M$. Thus, one obtains the following
consequence to Theorem~\ref{thm1}.

\begin{theorem}
\label{thm2} If $M\subset \R^3$ is a complete, connected, embedded
minimal surface with finite index of stability, then
the index of $M$ can be bounded
by a constant that only depends on its (finite) genus.
In the case of genus
zero, the surface $M$ is a plane or catenoid,  and so this  index
upper bound is $1$.
\end{theorem}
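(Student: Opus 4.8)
\noindent\textbf{Proof strategy for Theorem~\ref{thm2}.}
The plan is to turn Theorem~\ref{thm1} into an index bound by routing it through finite total curvature and the degree of the Gauss map. Let $M\subset\R^3$ be complete, connected, embedded and of finite index of stability. The first step is to observe, using the theorem of Fischer-Colbrie~\cite{fi1} recalled in the introduction, that finiteness of the index forces $M$ to have finite total curvature; hence, by Osserman~\cite{os1}, $M$ is conformally a closed Riemann surface $\overline M$ of some finite genus $g$ with finitely many punctures, is properly embedded (also by Colding--Minicozzi~\cite{cm35}), and its Gauss map $N$ extends meromorphically to $\overline M$. (If one is worried about $M$ being one-sided, I would first pass to its orientable double cover, whose index and total curvature are then also finite, and then use the resulting finite-total-curvature structure to reduce to the two-sided case.)

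Write $k$ for the number of ends of $M$. The cases $k\le 1$ can be dispatched directly: $k=0$ is impossible (a compact minimal surface in $\R^3$ contradicts the maximum principle applied to a coordinate function), and if $k=1$ then $M$ is a plane --- for if $g\ge 1$ then by~\cite{mr8,bb2} the end of $M$ is asymptotic to that of a helicoid, forcing infinite total curvature and contradicting the previous step, while if $g=0$ then $M$ is simply connected, hence a plane or a helicoid by~\cite{mr8}, and the helicoid is again excluded by its infinite total curvature. So I may assume $k\ge 2$; then $M$ has finite topology, Theorem~\ref{thm1} gives a function $e(\cdot)$ depending only on the genus with $k\le e(g)$, and the Jorge--Meeks formula~\cite{jm1} yields $\deg N=g+k-1\le g+e(g)-1$.

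For the final step I would invoke the analytic input: by Fischer-Colbrie~\cite{fi1} the index of $M$ equals the index of the Schr\"{o}dinger operator $\Delta+\|\nabla N\|^2$ on $\overline M$, and by Tysk~\cite{ty} this index is bounded above by an explicit function of $\deg N$; composing with the bound from the previous step shows the index of $M$ is bounded by a function of $g$ alone, which is the first assertion. For the sharp genus-zero statement I would not rely on Tysk's (generally non-optimal) bound but instead on L\'opez--Ros~\cite{lor1}: a complete, embedded, genus-zero minimal surface of finite total curvature is a plane or a catenoid; the plane is stable and the catenoid has index exactly $1$ (at least $k-1=1$ by Grigor'yan--Netrusov--Yau~\cite{gny1}, at most $1$ by Tysk~\cite{ty}), so the optimal bound for genus zero is $1$, attained by the catenoid.

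I do not expect a serious obstacle here: the entire depth of the result lies in Theorem~\ref{thm1} and in the cited theorems of Fischer-Colbrie, Tysk and L\'opez--Ros, and what remains is essentially bookkeeping. The only points that will need a little care are the reduction from finite index to finite total curvature (including the possibly one-sided case), the elimination of the one- and zero-ended cases, and checking that the general argument can be sharpened to the bound $1$ in genus zero.
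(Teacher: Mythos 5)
Your proposal is correct and follows essentially the same route as the paper: Fischer-Colbrie~\cite{fi1} to pass from finite index to finite total curvature, the meromorphic extension of the Gauss map and the Jorge--Meeks formula~\cite{jm1} to get $\deg N=g+k-1$, Theorem~\ref{thm1} to bound $k$ by the genus, Tysk~\cite{ty} for the upper bound on the index in terms of $\deg N$, and L\'opez--Ros~\cite{lor1} for the genus-zero case; your extra bookkeeping (double cover, $k\le 1$) is fine. The one small slip is the claim that Tysk gives the catenoid index at most $1$: his bound is roughly $7.68\,\deg N$, so the sharp value $1$ for the catenoid is instead the classical computation (cf.\ Montiel--Ros~\cite{mro1}, Ejiri--Kotani~\cite{ek2}, cited in the paper), which does not affect the correctness of your conclusion.
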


For any integer $k\geq 2$, the $k$-noid defined by
Jorge and Meeks~\cite{jm1} has genus zero, $k$
catenoid type ends and index $2k-3$ (Montiel-Ros~\cite{mro1}
and Ejiri-Kotani~\cite{ek2}). Also,
there exist examples of complete, immersed minimal surfaces of genus zero with a
finite number of parallel catenoidal ends
but which have arbitrarily large index of stability. These
examples demonstrate the necessity  of the embeddedness hypothesis in
Theorem~\ref{thm2}.

The proof of Theorem~\ref{thm1} depends heavily
on results developed in our previous
papers~\cite{mpr11,mpr3,mpr4,mpr10}. These papers,
as well as the present one, rely
on a series of deep works by Colding and
Minicozzi~\cite{cm21,cm22,cm24,cm23,cm25} in which
they describe the local geometry of a complete, embedded minimal
surface in a Riemannian three-manifold, where there
is a local bound on the genus of the surface.

\section{Preliminaries.}
\label{secprelim}
Throughout the paper,  we will denote by $\B (x,r)$ the open ball in
$\R^3$ with center $x\in \R^3$ and radius $r>0$,
and by $\overline{\B }(x,r)$ its closure. When $x$ is the origin,
we will simply write $\B (r),\overline{\B }(r)$
respectively for these particular open and closed balls.
We let $\D (r)$ denote the open disk of radius $r$ centered at the origin in $\R^2$.
For a surface $\Sigma \subset \R^3$, $K_{\Sigma }$ will
denote its Gaussian curvature function.

\begin{definition}
{\rm
Let $A$ be an open subset of $\rth$. A sequence of surfaces $\{ \Sigma _n\} _n$ in  $A$
 is said to have {\it locally bounded curvature in $A$}, if for every compact
 ball $B\subset A$, the sequence of functions $\{ K_{\Sigma _n\cap B}\} _n$ is uniformly
 bounded. A sequence $\{ \Sigma _n\} _n$ of
properly embedded surfaces in  $A$ is called {\it
locally simply connected}, if for every $q\in A$,
there exist $\ve _q>0$ and $n_q\in \N $ such that for $n>n_q$,  the
components of $\Sigma _n\cap  \B (q,\ve _q)$
are disks with their boundaries in the boundary
 of $ \B (q,\ve _q)$.

 Proposition 1.1 in Colding and Minicozzi~\cite{cm35}
ensures that the property that a sequence of embedded minimal surfaces $\{ M(n)\} _n$ is
locally simply connected in $A$ is equivalent to the
property that $\{ M(n)\} _n$ has {\it locally positive injectivity
radius in $A$}, in the sense that for every $q\in A$,
there exist $\ve _q>0$ and $n_q\in \N $ such that for $n>n_q$, the
restriction to  $M(n)\cap \B (q,\ve _q)$ of the
injectivity radius function $I_{M(n)}$ of $M(n)$ is
greater than a positive constant independent of $n$.
In particular, if the $M(n)$ also have boundary, then for
any $p\in A$ there exist $\de _p>0$ and $n_p\in \N$ such that
$\partial M(n)\cap \B( p,\de _p)=\varnothing$ for all $n>n_p$, i.e.,
points in the boundary of
$M(n)$ must eventually diverge in space or converge to a subset of $\R^3\setminus A$.

We will call a sequence of surfaces $\{ \Sigma _n\}_n$ in $\rth$
 {\it uniformly locally simply connected},
if there exists $\ve >0$ such that for every $x\in \R^3$,
$\Sigma_n\cap \B (x,\ve )$ consists
of disks with boundary in $\partial \B (x, \ve)$ for all $n$
sufficiently large (depending on $x$).
}
\end{definition}

In the proof of our main Theorem~\ref{thm1} we will
use repeatedly the following statement, which is an application of
Theorem~1.6 in~\cite{mpr11}.

\begin{theorem}
\label{structurethm}
Suppose $W$ is a closed countable subset of $\R^3$. Let
$\{M_n\}_n$ be a sequence of smooth, connected, properly embedded minimal surfaces in $A=\R^3\setminus W$
such that:
\begin{enumerate}[i.]
\item $\{M_n\}_n$
has locally positive injectivity
radius  in $A$.
\item For each $n\in \N$, $M_n$ has compact boundary (possibly empty) and  genus
at most $g\in \N\cup\{0\}$, for some $g$ independent of $n$.
\end{enumerate} Then, after replacing $\{M_n\}_n$ by a
subsequence and composing the surfaces with a fixed
 rotation, there exists a minimal lamination $\cL$ of $A$ and a closed subset
${\cal S}({\cal L})\subset {\cal L}$  such that
$\{M_n\}_n$ converges $C^\a$, for all $\a \in (0,1)$, on compact subsets
of $A\setminus S({\cal L})$ to ${\cal L}$; here ${\cal S}(\cal L)$ is the singular
set of convergence\footnote{This means that for every $x\in S({\cal L})$
and for all $r>0$, we have $\limsup |K_{M_n\cap \B (x,r)}|=\infty $.}
of the $M_n$ to ${\cal L}$.

Furthermore:
\begin{enumerate}
\item The closure
$\overline{\lc}$ of $\lc$ in $\R^3$ has the structure
of a minimal lamination of $\R^3$.
\item \label{itt2} If $S({\cal L})\neq \mbox{\rm \O}$,
then the convergence of $\{M_n\}_n$ to  $\ov{\cal L}$
has the structure of a horizontal limiting parking garage structure
in the following sense:
\begin{enumerate}[2.1.]
\item  $\ov{\cal L}$  is a foliation of $\R^3$ by horizontal
planes and $\ov{S({\cal L})}$ consists of one or two
 vertical straight lines
(called columns of the limiting parking garage structure).
\item As $n \to \infty$, a pair of highly sheeted
multivalued graphs are forming inside $M_n$ around each
of the lines in $\ov{S({\cal L})}$, and if  $\ov{S({\cal L})}$ consists of two lines,
 then these pairs of
multivalued graphs inside the $M_n$ around different lines are
oppositely handed.
\end{enumerate}
\item \label{it5}
If  $\ov{\cL}$ contains
a non-flat leaf, then $S({\cal L})=\varnothing$ and
$\ov{\cal L}$ consists of a single leaf $L_1$,
which is properly embedded in $\R^3$ and the genus of $L_1$ is
at most $g$. Furthermore, 
$\{ M_n\} _n$ converges smoothly on compact sets in $\rth$ to $L_1$ with multiplicity~$1$ and one
of the following three cases holds for $L_1$.
\begin{enumerate}
\item $L_1$ has one end  and it is asymptotic to a helicoid. 
\item $L_1$ has non-zero finite total curvature.
\item $L_1$ has two limit ends.
    \end{enumerate}
\end{enumerate}
\end{theorem}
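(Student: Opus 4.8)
The plan is to obtain this theorem as a specialization of~\cite[Theorem~1.6]{mpr11}. So the first task is to match hypotheses: condition~(i) here is the ``locally simply connected in $A$'' condition of that result by~\cite[Proposition~1.1]{cm35}, and condition~(ii) is its local genus bound; the ambient space is the open set $A=\R^3-W$. The second task is to transcribe its conclusion into the present formulation: a minimal lamination $\cL$ of $A$ together with its closed singular set of convergence $S({\cal L})$, a closure $\overline{\cL}$ that is a minimal lamination of all of $\R^3$, and the dichotomy between the flat and non-flat cases together with the structure listed in each. The one point that needs care is the closure statement: passing from a lamination of $A$ to a lamination of $\R^3$ uses the countability of $W$ in an essential way --- so that $\cL$ extends across the countable set $W\cup(\R^3-A)$ by the local removable singularity theorem for minimal laminations --- together with the bookkeeping that the genus cannot increase under $C^\a$ convergence, so the bound $g$ persists and, in the non-flat case, passes to the unique limit leaf.

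For a reader who prefers to see the mechanism rather than quote~\cite{mpr11}, the skeleton is as follows. Define $S({\cal L})\subset A$ to be the set of points $x$ for which $\limsup_n\sup_{M_n\cap\B(x,r)}|K_{M_n}|=\infty$ for every $r>0$; this set is closed in $A$, and on compact subsets of $A-S({\cal L})$ the curvatures $|K_{M_n}|$ are uniformly bounded after passing to a subsequence, directly from the definition. Standard elliptic estimates then give $C^\a$ (in fact $C^\infty$) subconvergence for every $\a\in(0,1)$, and the maximum principle forces the limit sheets to be pairwise disjoint, so they form a minimal lamination $\cL$ of $A-S({\cal L})$. The Colding--Minicozzi curvature estimates and one-sided curvature estimate~\cite{cm21,cm22,cm24,cm23,cm25} enter at the next stage, to describe the $M_n$ near $S({\cal L})$ and to force the dichotomy.

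The substance is this dichotomy. If $\overline{\cL}$ has a non-flat leaf, then a sheet of multiplicity $\ge 2$ in the convergence would produce a nonzero Jacobi field of one sign on that leaf, forcing it to be stable and hence flat (do~Carmo--Peng, Fischer-Colbrie--Schoen, Pogorelov), a contradiction; so the convergence is smooth of multiplicity one, and by the structure of minimal laminations of $\R^3$ containing a non-flat leaf (cf.~\cite{mr8}) $\overline{\cL}$ is a single properly embedded leaf $L_1$ with $S({\cal L})=\emptyset$ and genus at most $g$. Cases~(a)--(c) are then read off from the classification of properly embedded minimal surfaces by their ends: Meeks--Rosenberg~\cite{mr8} and Bernstein--Breiner~\cite{bb2} give~(a) when $L_1$ has finite topology and one end, Collin~\cite{col1} gives~(b) when $L_1$ has finite topology and at least two ends (with nonzero total curvature, since $L_1$ is non-flat), and~\cite{ckmr1} together with~\cite{mpr4} give~(c) when $L_1$ has infinite topology (at most two limit ends by the former result, and not exactly one by the latter). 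If instead every leaf of $\overline{\cL}$ is flat and $S({\cal L})\neq\emptyset$, then near $S({\cal L})$ the Colding--Minicozzi theory applies to the disk components of $M_n\cap\B(x,\ve_x)$: after a rotation $\overline{\cL}$ is the foliation of $\R^3$ by horizontal planes, the $M_n$ contain highly sheeted double multivalued graphs converging to it, $\overline{S({\cal L})}$ is a union of Lipschitz curves transverse to the planes, and the one-sided curvature estimate together with the rigidity of the limiting multigraphs (as packaged in~\cite{mpr10,mpr11}) upgrades these to finitely many vertical straight lines, one or two of them and oppositely handed when there are two --- the limiting parking garage structure of item~\ref{itt2}.

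The step I expect to be the genuine obstacle is precisely the one that~\cite[Theorem~1.6]{mpr11} provides off the shelf: extending $\cL$ across $S({\cal L})\cup W$ to a minimal lamination of $\R^3$, and proving that $\overline{S({\cal L})}$ consists of finitely many \emph{straight} vertical lines in the flat case. This rests on the deep Colding--Minicozzi description of the limit foliations together with the local removable singularity theorem for minimal laminations; with those in hand, the rest of the argument is bookkeeping --- matching hypotheses and transcribing conclusions.
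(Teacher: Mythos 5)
Your overall strategy coincides with the paper's: the result is deduced from Theorem~1.6 of~\cite{mpr11}, and your treatment of item~3 (instability of a two-sided non-flat limit leaf forcing multiplicity one, the genus bound by lifting curves, and the identification of cases (a)--(c) via Bernstein--Breiner/Meeks--P\'erez, Collin, and~\cite{mpr4}) is essentially the argument the authors give. However, there is a genuine gap in your handling of item~2, the limiting parking garage structure. You treat this as a conclusion that can simply be ``transcribed'' from~\cite{mpr11} (or recovered by a soft appeal to the one-sided curvature estimate and ``rigidity of the limiting multigraphs''), but the statement in~\cite{mpr11} that actually yields the foliation by planes with \emph{one or two} columns that are \emph{oppositely handed} when there are two (item~7.3 of Theorem~1.6 there) is applied in the paper to \emph{compact} embedded minimal surfaces of bounded genus. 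The surfaces $M_n$ here are non-compact, live only in $A=\R^3-W$, and may have boundary, so the paper must first reduce to the compact case: it chooses radii $R_k\in(k,k+1)$ with $\partial\B(R_k)$ disjoint from $W$ (this is where the countability of the closed set $W$ is used, together with Sard's theorem for transversality), excises small smooth neighborhoods $\Delta_{n,k}$ of $W\cap\B(R_k)$, forms the doubly indexed compact surfaces $M_{n,k}=[M_n-\Int(\Delta_{n,k})]\cap\ov{\B}(R_k)$, and passes to a diagonal sequence $\{M_{k,k}\}_k$ of compact surfaces of genus at most $g$ converging to the same $\cL$ with the same singular set; only then is item~7.3 invoked.

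So your bookkeeping misplaces where the countability of $W$ does its work: you use it only for the removable-singularity/extension step (which the paper handles via item~7.1 of Theorem~1.6 in~\cite{mpr11}, again thanks to the genus bound), whereas the paper also needs it for the sphere-and-excision construction above. Without that reduction, your claim that the singular set consists of at most two vertical lines with opposite handedness is exactly the point left unproved: nothing in your sketch rules out, say, more columns or same-handed columns for a sequence of non-compact surfaces with boundary in $\R^3-W$. Filling this in requires reproducing the compact truncation and diagonal argument, not merely quoting the theorem.
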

\begin{remark}
		\label{remark1}
{\rm
	Before giving the proof of Theorem~\ref{structurethm}, we will illustrate its statement with some examples.
\begin{enumerate}
	\item The limit of homothetic shrinkings $M_n=\frac{1}{n}C$ of a vertical catenoid $C=\{ \cosh^2(x^2+y^2)=z^2\} $
	is a particular case of Theorem~\ref{structurethm} with  $W=\{ \vec{0}\} $ and $g=0$.
	In this example, $\mathcal{L}$ is the punctured plane $\{ z=0\} \setminus \{ \vec{0}\}$ and
	$S(\mathcal{L})=\varnothing$.
	\item The classical Riemann minimal examples $R_t$, $t>0$, form a 1-parameter family of properly embedded,
	singly-periodic minimal surfaces with genus zero and
	infinitely many planar ends asymptotic to horizontal planes.
	As a particular case of Theorem~\ref{structurethm}, one can
	consider the limit of the $R_t$ when the flux vector of
	$R_t$ along a compact horizontal section converges to $(2,0,0)$. In this example, $W=\varnothing$, $g=0$
	and item 2.2 of Theorem~\ref{structurethm} holds.
	\item Theorem~0.9 in Colding-Minicozzi~\cite{cm25}
	can be viewed as a particular case of
	Theorem~\ref{structurethm} when $\{ M_n\} _n$ is a locally
	simply connected sequence in $\R^3$ of compact planar domains with $\partial M_n\subset \partial \B (R_n)$ and
	$R_n\to \infty $ (hence $W=\varnothing$ and $g=0$), with the
	additional assumptions:
	\begin{itemize}
		\item $\sup |K_{M_n\cap \B(y,r)}|\to \infty $ for some $y\in \R^3$ and for all $r>0$
		(hypothesis (0.2) in~\cite{cm25}). This implies the $S(\mathcal{L})\neq \varnothing $, and thus, item~2 of
		Theorem~\ref{structurethm} holds.
		\item There exists $R>0$ such that each $M_n$ intersects $\B (R)$ in a component that is not a disk
		(hypothesis (0.5) in~\cite{cm25}). This allows us to discard the case of a single column for the limiting
		parking garage structure in item~2.1 of Theorem~\ref{structurethm}, and item~2.2 holds.
	\end{itemize}
	\item Take a properly embedded minimal surface $M\subset \R^3$ with infinite total curvature. By the Dynamics Theorem
	(Theorem~2 in~\cite{mpr20}), the set $D_1(M)$ of non-flat properly embedded minimal surfaces $\Sigma \subset \R^3$ which are
	obtained as $C^2$-limits of a divergent sequence $\{ \lambda _n(M-p_n)\}_n$	of dilations of $M$ (i.e., the translational
	part $p_n\in \R^3$ of the dilations diverges) such that $\vec{0}\in \Sigma$, $|K_{\Sigma }|\leq 1$ on $\Sigma $ and
	$|K_{\Sigma}|(\vec{0})=1$, is non-empty. Given $\Sigma \in D_1(M)$ obtained as the limit of $M_n=\lambda _n(M-p_n)$, the fact
	that $\Sigma$ has locally positive injectivity radius in $\R^3$ implies that $\{ M_n\}_n$ also has locally positive injectivity
	radius in $\R^3$ (i.e., $W=\varnothing$ in Theorem~\ref{structurethm}). Now assume that $\Sigma $ has finite genus $g$. After
	possibly replacing $M_n$ by $[\lambda _n(M-p_n)]\cap \B(R_n)$ for an appropriate sequence of radii
	$R_n\to \infty $, we can assume
	that hypothesis ii of Theorem~\ref{structurethm} holds for $\{ M_n\}_n$. In this example, $\mathcal{L}=\Sigma$,
	$S(\mathcal{L})=\varnothing$ and item~3 of Theorem~\ref{structurethm} holds.
\end{enumerate}
}
\end{remark}
\begin{proof}[of Theorem~\ref{structurethm}]
The existence of the minimal lamination ${\cal L}$ of
$A$ such that
the $M_n$ converge to ${\cal L}$  (after passing to a subsequence)
in $A\setminus S({\cal L})$ follows directly from the main statement of
Theorem~1.5 in~\cite{mpr11}; note that
singularities of ${\cal L}$ are ruled out in our
setting by item~7.1 of Theorem~1.5 in~\cite{mpr11}
because the $M_n$ have uniformly bounded genus.
The same argument using  item~7.1 of Theorem~1. 5 in~\cite{mpr11} ensures that item~1 of
Theorem~\ref{structurethm} holds.

Now assume that $S({\cal L})\neq \varnothing$ and we will prove that  item~2 holds.
To accomplish this, we first define a sequence of auxiliary compact minimal surfaces.
For each $k\in \N$, choose an $R_k\in (k,k+1)$ such that the sphere
$\partial \B(R_k)$ is disjoint from $W$,
which is possible since $W$ is a countable set.  Since
$W \cap \ov{\B}(k+1)$ is a compact set, then $\partial \B(R_k)$ is at a positive distance $2d_k$
from $W$; define $W_k=W\cap \B(R_k)$ and for any $\ve>0$ let $W_k(\ve)$
be the open $\ve$-neighborhood of $W_k$ in $\rth$.
As the sequence $\{M_n\}_n$ is locally simply connected
 in $A$ and $W$ is at a positive
 distance from $\partial \B(R_k)$, then for $k$ fixed and $n$ sufficiently large,
 $\partial M_n$ is disjoint
 from $\partial \B(R_k)$ and by Sard's Theorem, we can also assume that
$R_k$ is chosen so that $\partial \B(R_k)$  is transverse to all of the surfaces $M_n$.
Also by Sard's Theorem, there exist smooth, compact,
possibly disconnected three-dimensional domains $\Delta_{n,k}\subset \B(R_k)$
satisfying $W_k\subset \Int(\Delta_{n,k})\subset W_k(d_k/n)$ and
such that $\partial \Delta_{n,k}$ is transverse to $M_n$ for all $k$ and for $n$ sufficiently large.
It follows that
$M_{n,k}=[M_n \setminus  \Int(\Delta_{n,k})] \cap \ov{\B}(R_k)$ is a  doubly indexed sequence
of smooth compact surfaces such that, after
replacing $M_n$ by a subsequence, $\{M_{k,k}\}_k$ is a sequence of
smooth, compact embedded minimal surfaces that
satisfy the hypotheses of Theorem~\ref{structurethm}. By construction,
the sequence $\{M_{k,k}\}_k$  also converges to $\cL$ with
the same singular set of convergence $S(\cL)$.
However, since the  minimal
surfaces in the sequence $\{M_{k,k}\}_k$ are compact with  genus
at most $g$, then item~7.3 in Theorem~1.5 of~\cite{mpr11}
implies that  item~\ref{itt2} of Theorem~\ref{structurethm}
holds.

Finally we demonstrate item~\ref{it5} of Theorem~\ref{structurethm}.
Suppose that the non-singular minimal lamination $\overline{\cal L}$ contains a non-flat
leaf $L_1$. By item~6 of Theorem~1.5 in~\cite{mpr11}, $L_1$ is properly
embedded in a simply connected
domain in $\rth$, which implies that  it is two-sided. In particular,
$L_1$ is not stable and thus, the convergence of portions of the
$\{M_{k,k}\}_k$ to $L_1$ must have multiplicity $1$ (see e.g., Lemma~3 in~\cite{mpr20}).
This last property and a standard curve lifting argument ensure that the
genus of $L_1$ is at most $g$.
By item~6 of Theorem~1.5 in~\cite{mpr11}, $L_1$ is proper in $\R^3$ and it is
the unique leaf of $\overline{\cal L}$ (that is, possibility~6.1 in that
theorem holds, because case 6.2 in the same
theorem cannot occur due to the finiteness of the genus of $L_1$). In particular, $S({\cal L})
=\varnothing$ since through every point in $S({\cal L})$ there passes a planar
leaf of $\overline{\cal L}$ by item~4 of Theorem~1.5 in~\cite{mpr11}).
This proves the main statement in item~\ref{it5} of
Theorem~\ref{structurethm}. Item~3(a) follows from work of
Bernstein and Breiner~\cite{bb2} or Meeks and P\'erez~\cite{mpe3}. Item~3(b) occurs
when the number $k$ of ends of $L_1$ satisfies $2\leq k<\infty$, as follows
from Collin~\cite{col1}. Since $L_1$ has finite genus, then item~3(c) occurs when $k=\infty $ by
Theorem~1 in~\cite{mpr4}. Now the proof is complete.
\end{proof}

\section{The proof of Theorem~\ref{thm1}.}
\label{secproofthm1}
By Collin~\cite{col1} and  L\'opez-Ros~\cite{lor1},
the catenoid is the only properly embedded, connected
genus-zero minimal surface with at least two ends and finite topology,
and so Theorem~\ref{thm1} holds for genus-zero surfaces. Arguing by contradiction,
suppose that for some positive integer $g$, there exists
an infinite sequence $\{ M(n)\} _{n\in \N }$ of
properly embedded minimal surfaces in $\R^3$ of
genus $g$ such that for every $n$, the
number of ends of $M(n)$ is finite and strictly less
than the number of ends of $M(n+1)$ and
the number of ends of $M(1)$ is at least three. By Collin's Theorem~\cite{col1}, each
of these surfaces has finite total curvature with planar and catenoidal ends, where all ends
can be assumed to be horizontal for all $n$ after a suitable rotation.

\subsection{Sketch of the argument.}
The argument to find the desired contradiction to prove Theorem~\ref{thm1} is based on
an inductive procedure, each of whose stages starts by finding a scale of smallest
 non-trivial topology for the sequence of surfaces; next we will identify the limit $L$
of the sequence in this scale (after passing to a subsequence) as being a
properly embedded minimal surface in $\R^3$ with controlled geometry; this control
will allow us to perform a surgery on the surfaces of the sequence (for $n$ sufficiently
large) that simplifies their topology. This simplification of the topology will allow us
to find another smallest scale of non-trivial topology in the subsequent stage of the process
and then to repeat the arguments. The fact that all surfaces in the original sequence have
fixed genus insures that after finitely many stages in the procedure, the simplification
of the topology of the surfaces cannot be by lowering their genus. This fact will be used to prove
that after some stage in the process, all limit surfaces $L$ that we obtain with this
procedure will be catenoids. In turn, this fact will allow us to find compact subdomains
$\Lambda (n)$ with boundary inside the $M(n)$ (for $n$ sufficiently large), that
reproduce almost perfectly formed, large compact pieces of the limit catenoids
suitably rescaled, and we will show that we can find as many of these
subdomains $\Lambda (n)$ as we like and whose associated almost waist circles $\G _n
\subset \Lambda (n)$
separate $M(n)$ and such that these domains form a pairwise disjoint collection.
This separation property of the waist circles will let us control the
flux vector of $M(n)$ along $\G_n$. The final contradiction will follow from an
application the L\'opez-Ros deformation to a suitable non-compact, genus-zero subdomain in
$M(n)$ bounded by two of these separating curves $\G _n$.

\subsection{Rescaling non-trivial topology in the first stage.}
The asymptotic behavior
of $M(n)$ implies that for each $n\in \N$,
there exists a positive number $r_{1,n}$
such that every open ball in $\R^3$ of radius $r_{1,n}$ intersects the surface
$M(n)$ in simply connected components and there is some point $T_{1,n}\in \R^3$ such that
$\overline{\B }(T_{1,n},r_{1,n})$ intersects $M(n)$ in at
least one component that is not simply connected.
Then, the rescaled and translated minimal surfaces
\[
M_{1,n}=\frac{1}{r_{1,n}}(M(n)-T_{1,n})
\]
of finite total curvature have horizontal ends and satisfy the following uniformly locally simply connected
property for all $n\in \N $:
\par
\vspace{.2cm}
\noindent
$(\star )$ Every open ball of radius $1$ intersects $M_{1,n}$ in
disk components, and the closed unit ball $\overline{\B }(1)$ intersects
$M_{1,n}$ in a component $\Omega _n$ that is not simply connected.
\vspace{.2cm}

By property $(\star)$, the compact semi-analytic set $M_{1,n}\cap \overline{\B }(1)$
with $M_{1,n}\cap \partial{\B }(1)$ being analytic,
which admits a triangulation by~\cite{lo1},  contains a
 non-simply connected component $\Omega _n$. It is straightforward to prove that
$\partial \Omega_n$ contains a piecewise-smooth simple closed curve $\G_{n}$ that does not
bound a disk in $M_{1,n}\cap \overline{\B }(1)$; for example, see~\cite{my1}
for similar constructions.
Thus, property $(\star)$ implies the next one:
\par
\vspace{.2cm}
\noindent
$(\star \star)$
There exists a piecewise-smooth simple closed curve $\beta(n)\subset \partial\Omega_n\subset  M_{1,n}\cap\partial\B(1)$
that is not the boundary of a disk in $M_{1,n}$, where $\Omega_n$ is defined in property $(\star)$.

\subsection{Controlling the limit of the rescaled surfaces in the first stage.}
\label{limit1stage}
By the main statement in Theorem~\ref{structurethm} applied to the $M_{1,n}$ with
$W=\varnothing$, after passing to a subsequence, the surfaces in the sequence $\{M_{1,n}\}_n$
converge to a minimal lamination ${\cL}_1$ of $\rth$ with related singular set of
convergence ${S({\cal L}_1)}$.
Assume for the moment that $S({\cal L}_1)=\varnothing$ (this property will be
proven in Lemma~\ref{lemma3} below).
\begin{lemma}
\label{lemanew}
If $S({\cal L}_1)=\varnothing$, then ${\cal L}_1$ consists of a single leaf $L_1$ that
is not simply connected,  has genus at most $g$ and is properly embedded in $\rth$.
\end{lemma}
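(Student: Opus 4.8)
The plan is to apply Theorem~\ref{structurethm} to the sequence $\{M_{1,n}\}_n$ with $W=\mbox{\O }$ and $A=\rth$, and then to use the uniform non-triviality of the topology recorded in properties $(\star)$ and $(\star\star)$ to identify the limit lamination. Since $\{M_{1,n}\}_n$ is uniformly locally simply connected with genus at most $g$, Theorem~\ref{structurethm} produces (after passing to a subsequence and composing with a rotation) the minimal lamination ${\cal L}_1$ of $\rth$, and here, as $A=\rth$, we have ${\cal L}_1=\ov{{\cal L}_1}$. The standing hypothesis $S({\cal L}_1)=\mbox{\O }$ rules out item~\ref{itt2} of the theorem, so item~\ref{it5} leaves exactly two alternatives: (a) every leaf of ${\cal L}_1$ is a plane, and then, the leaves being pairwise disjoint and hence parallel, ${\cal L}_1$ is a lamination of $\rth$ by parallel planes; or (b) ${\cal L}_1$ contains a non-flat leaf, and then ${\cal L}_1$ consists of a single leaf $L_1$ which is properly embedded in $\rth$, has genus at most $g$, and to which $\{M_{1,n}\}_n$ converges smoothly and with multiplicity one on compact subsets of $\rth$. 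Thus it suffices to discard (a) and to prove that in case (b) the leaf $L_1$ is not simply connected.

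To discard (a), rotate so that the planar leaves of ${\cal L}_1$ are horizontal. Because $S({\cal L}_1)=\mbox{\O }$, the $M_{1,n}$ have uniformly bounded second fundamental form on $\ov{\B }(2)$, and a standard compactness argument shows that their Gauss maps converge uniformly to $\pm e_3$ along $M_{1,n}\cap\ov{\B }(1)$. Hence, for $n$ large, $M_{1,n}\cap\ov{\B }(1)$ is a disjoint union of nearly horizontal graphs, each of which projects vertically onto a planar domain $C^1$-close to a round disk; in particular every component of $M_{1,n}\cap\ov{\B }(1)$ is a disk. This contradicts the existence of the non-simply-connected component $\Omega_n$ in property $(\star)$, so case (a) cannot occur and ${\cal L}_1=\{L_1\}$ with $L_1$ as in (b).

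It remains to show that $L_1$ is not simply connected. Suppose it were. Since $L_1$ is non-flat and properly embedded in $\rth$, the theorem of Meeks and Rosenberg~\cite{mr8} forces $L_1$ to be a helicoid. A direct computation shows that a helicoid meets every open ball of $\rth$ in simply connected pieces: writing the helicoid as the image of $(u,v)\mapsto (u\cos v,u\sin v,v)$, one finds that for each fixed $v$ the preimage of a ball is either empty or a bounded interval in the $u$-variable, so the preimage of the ball is a disjoint union of ``tubes'' over intervals of the $v$-axis, each of which is a topological disk. Now choose $r\in (1,\frac{3}{2})$ with $\partial \B (r)$ transverse to $L_1$. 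Since $\{M_{1,n}\}_n$ converges to $L_1$ smoothly and with multiplicity one on $\ov{\B }(2)$, for $n$ large the components of $M_{1,n}\cap\ov{\B }(r)$ are diffeomorphic to those of $L_1\cap\ov{\B }(r)$, hence are disks. In particular the component of $M_{1,n}\cap\ov{\B }(r)$ containing the curve $\beta(n)\subset M_{1,n}\cap\partial \B (1)$ is a disk, so $\beta(n)$ bounds a disk in $M_{1,n}$, contradicting property $(\star\star)$. Therefore $L_1$ is not simply connected, which together with the previous paragraph completes the proof.

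The step I expect to be the main obstacle is the one just used in case (b): converting the statement ``$\beta(n)$ does not bound a disk in $M_{1,n}$'', which concerns the \emph{non-compact} surface $M_{1,n}$, into a statement about the compact picture near $\ov{\B }(1)$, since \emph{a priori} a disk in $M_{1,n}$ bounded by $\beta(n)$ could escape every fixed compact set. This is precisely why one must invoke the full classification (Meeks--Rosenberg) to pin down $L_1$ as a helicoid and then exploit the scale-invariant geometry of the helicoid, rather than argue purely from the convergence. The remaining points — the uniform convergence of the Gauss maps and the matching of topology under $C^1$-convergence together with transversality to the spheres $\partial \B (r)$, and, in case (a), the possibility of higher-multiplicity planar leaves — are routine consequences of the curvature bound furnished by $S({\cal L}_1)=\mbox{\O }$ and of the embeddedness of the $M_{1,n}$.
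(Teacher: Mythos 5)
Your proposal is correct and follows essentially the same route as the paper: reduce via item~3 of Theorem~\ref{structurethm} to showing ${\cal L}_1$ has a non-simply connected leaf, identify a simply connected non-flat leaf as a helicoid via Meeks--Rosenberg, and contradict property $(\star\star)$ by showing the component of $M_{1,n}$ containing $\beta(n)$ in a slightly larger ball is a disk (the paper treats the plane and helicoid cases together by observing that this component $\widetilde{\Omega}_n\subset M_{1,n}\cap\overline{\B}(\delta)$ is an almost-flat disk or a small normal graph over the helicoid, while you argue the plane case via the Gauss map and the helicoid case via the explicit fact that helicoids meet balls in disks). Incidentally, the ``main obstacle'' you flag is not really one: since the disk is produced inside the local component containing $\beta(n)$, no disk needs to be tracked through non-compact parts of $M_{1,n}$.
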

\begin{proof} By item~\ref{it5} of Theorem~\ref{structurethm}, it suffices to prove that
${\cal L}_1$ contains a non-simply connected leaf.

Reasoning by contradiction, suppose that ${\cal L}_1$ consists entirely of leaves that
are complete minimal surfaces
that are simply connected.  Theorem~\ref{structurethm}
implies that either  ${\cal L}_1$ has a single leaf that is non-flat and properly
embedded in $\rth$ or else
${\cal L}_1$ consists of planar leaves. The uniqueness of the helicoid~\cite{mr7}
demonstrates that if ${\cal L}_1$ has a single leaf $L_1$, then this leaf is a plane or a helicoid.
By  Sard's Theorem, we can take $\de \in (1,2)$ such that
each $M_{1,n}$ intersects the sphere $\partial \B (\de )$ transversely.
Let
$\widetilde{\Omega }_n$ be the  component of $M_{1,n}\cap \overline{\B }(\de )$ that
contains the homotopically non-trivial simple closed curve $\beta(n)\subset \partial \Omega_n$,
where  $\beta(n)$, $ \Omega_n$ are given in property $(\star \star)$.

By the convex hull property for minimal surfaces,
$\widetilde{\Omega }_n$  is a compact minimal surface whose (smooth) boundary lies
in the sphere $\partial \B (\de )$. As the (smooth) limit ${\cal L}_1$ of $\{ M_{1,n}\} _n$
is a collection of planes or it is a helicoid $\cH$ that is a
smooth, multiplicity-one limit of  $\{ M_{1,n}\} _n$,
then for $n$ large $\widetilde{\Omega }_n$ is an almost-flat disk (in the case that  ${\cal L}_1$
is a collection of planes) or $\widetilde{\Omega }_n$ is a disk that is a small normal graph
over its orthogonal projection to the limit helicoid $\cH$. By the convex hull property for minimal surfaces,
the simple closed curve $\beta(n)$ is the boundary  a disk
in $ \Omega _n\subset M_{1,n}$, which is a contradiction that proves the lemma.
\end{proof}

By item~\ref{it5} of Theorem~\ref{structurethm}, then ${\cal L}_1$ consists of a single
leaf $L_1$ which is a properly embedded minimal surface in $\R^3$ with genus at most $g$,
the convergence of $\{ M_{1,n}\} _n\to L_1$ has multiplicity 1 and exactly one of the cases
3(a), 3(b) or 3(c) hold.

\begin{lemma}
\label{lemma2}
If $S({\cal L}_1)=\varnothing$, then case~\ref{it5}(c)
of Theorem~\ref{structurethm} cannot hold.
\end{lemma}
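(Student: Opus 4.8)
The plan is to rule out case~\ref{it5}(c) by exploiting the defining minimality property of the scale $r_{1,n}$ in property $(\star)$ together with the descriptive structure of minimal surfaces with two limit ends and finite genus from~\cite{mpr3,mpr4}. Suppose for contradiction that $S({\cal L}_1)=\mbox{\O}$ and that $L_1$ has two limit ends. Since $L_1$ has finite genus, our results in~\cite{mpr6} (building on~\cite{mpr3,mpr4}) show that each limit end of $L_1$ is asymptotic to the end of a Riemann minimal example; in particular, as one goes out to either limit end of $L_1$, the surface looks, at larger and larger scales, like a highly sheeted collection of almost-flat horizontal multivalued graphs glued along almost-vertical necks. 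Concretely, there are points $p_j \in L_1$ diverging to a limit end and radii $\rho_j \to \infty$ (with $\rho_j$ small compared to the local ``period'' of the Riemann-type neck at that scale) such that every ball of radius $\rho_j$ centered near $p_j$ intersects $L_1$ in simply connected pieces, while some ball of radius comparable to $\rho_j$ (capturing a full neck) meets $L_1$ in a non-simply-connected component. The key point is that $\rho_j \to \infty$: the scale at which non-trivial topology first appears, when measured near a limit end of a two-limit-ended surface, is unbounded.

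The contradiction then comes from transferring this back to the $M_{1,n}$. First I would fix $j$ large, so that within a ball $\B(p_j,2\rho_j)\subset\R^3$ the surface $L_1$ exhibits the behavior above: simply connected intersections with every radius-$\rho_j$ ball, but a non-disk component in some radius-$(\rho_j/2)$ ball, say. Because $S({\cal L}_1)=\mbox{\O}$ and the convergence $M_{1,n}\to L_1$ is smooth with multiplicity one on compact subsets of $\R^3$, for $n$ sufficiently large $M_{1,n}$ is a normal graph of small $C^2$-norm over $L_1$ inside $\B(p_j,2\rho_j)$. Normal graphs of sufficiently small norm preserve both the ``every radius-$(\rho_j/1.5)$ ball meets the surface in disks'' property and the ``some radius-$(\rho_j/1.5)$ ball meets the surface in a non-disk'' property; hence for $n$ large, $M_{1,n}$ has a ball of radius strictly bigger than $1$ whose intersection with $M_{1,n}$ is still in simply connected pieces everywhere, yet some slightly larger ball meets $M_{1,n}$ in a non-disk. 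Rescaling back by $r_{1,n}$, this says that every open ball of radius $(\rho_j/1.5)\, r_{1,n} > r_{1,n}$ intersects $M(n)$ in simply connected components — contradicting the minimality in the choice of $r_{1,n}$, since $r_{1,n}$ was defined to be (essentially) the \emph{largest} such radius.

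The main obstacle, which I would need to treat carefully, is making precise and quantitative the claim that near a limit end of a Riemann-type (finite genus, two limit ends) surface the ``first scale of non-trivial topology'' diverges, and that it does so uniformly enough that the normal-graph perturbation argument goes through. This requires invoking the asymptotic description from~\cite{mpr3,mpr6}: as one moves out toward a limit end, the necks become arbitrarily far apart relative to their size after rescaling, so a ball must have radius growing without bound (relative to the ambient unit scale) before it can enclose a neck while a ball of half that radius cannot. Once this geometric input is in hand, the perturbation step is a routine consequence of smooth multiplicity-one convergence, and the final contradiction with the defining property of $r_{1,n}$ is immediate. I should also note that the argument only uses that $L_1$ is non-compact with a limit end and finite genus together with the structure theory, so it does not interfere with cases~\ref{it5}(a) and~\ref{it5}(b), which will be handled separately.
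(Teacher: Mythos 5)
Your proposal has a genuine gap, in fact two. First, the geometric input you rely on is false: by the very structure results you cite (\cite{mpr3}, \cite{mpr6}), a properly embedded minimal surface of finite genus with two limit ends has each limit end asymptotic to the end of a Riemann minimal example, which is singly periodic. Hence, going out along a limit end, the necks recur with asymptotically \emph{constant} size and spacing; the ``first scale of non-trivial topology'' measured near a limit end stays bounded, it does not diverge. There are no radii $\rho_j\to\infty$ with the property you describe, and nothing in the asymptotic description gives necks that become ``arbitrarily far apart relative to their size.'' Second, even granting such radii, the concluding step is a non sequitur. The number $r_{1,n}$ is (essentially) the largest radius such that \emph{every} open ball of that radius meets $M(n)$ in simply connected components, and the failure at larger radii is witnessed at $T_{1,n}$, i.e.\ at the closed unit ball about the origin after rescaling (property $(\star)$). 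Exhibiting some balls of radius larger than $1$, located far from the origin in a limit-end region of $L_1$, whose intersections with $M_{1,n}$ are simply connected tells you nothing about balls near the origin, where $\overline{\B}(1)$ already meets $M_{1,n}$ in a non-simply-connected component; so no contradiction with the choice of $r_{1,n}$ can arise this way. (Indeed, any properly embedded surface of finite topology admits arbitrarily large balls, far out along its ends, meeting it only in disks, so an argument of this shape would ``exclude'' every possible limit.) The scale normalization alone is perfectly compatible with $L_1$ being a two-limit-end surface whose necks occur at scale roughly $1$ near the origin; some additional obstruction is needed.

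The paper's proof supplies exactly such an obstruction, and it is of a different nature: using Theorem~1 in~\cite{mpr3} and Theorem~8.1 in~\cite{mpr6}, one places $2g+2$ consecutive planar middle ends of $\Sigma=A(L_1)$ in a horizontal slab, transfers this picture to $\Sigma(n)=A(M_{1,n})$, and a counting argument for planar subdomains of a genus-$g$ surface produces a level curve $\gamma(i,n)$ that \emph{separates} $\Sigma(n)$. A least-area surface $\Delta(n)$ spanning $\gamma(i,n)$ in the complement, combined with curvature estimates for stable surfaces and a Gauss map/stability argument, forces the ends of $\Sigma(n)$ to be horizontal, so the flux of $\Sigma(n)$ along the separating curve $\gamma(i,n)$ is vertical; letting $n\to\infty$, $\Sigma$ would have vertical flux along $\gamma(i)$, contradicting Theorem~6 in~\cite{mpr3}. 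It is this flux obstruction for two-limit-end surfaces, not the maximality of the rescaling radius, that rules out case~3(c); if you want to salvage your approach you would need to replace your scale argument by something playing the role of that flux computation.
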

\begin{proof}
Assume that $L_1$ has two limit ends and we will find a contradiction.
In this proof we will use the general description of such a minimal surface that is
given in Theorems~1 in~\cite{mpr3} and 8.1 in~\cite{mpr6},
as well as the terminology in those theorems.
After a fixed rigid motion
$A \colon \R^3\rightarrow \R^3$ we may assume that
\begin{equation}
\label{eq:A1}
\Sigma  = A(L_1)
\end{equation}
has an infinite number of middle ends that are horizontal planar ends.
Furthermore, there is a
representative $E \subset \Sigma  $ for the top limit end
of $\Sigma $ that is conformally $\esf ^1
\times [t_0, \infty )$ punctured in an infinite set of points $\{e_1, e_2, \ldots,
e_k, \ldots\}$ that correspond to the planar ends of $E$; here,
$\esf ^1$ is a circle of circumference equal to the vertical
component of the  flux vector of $E$,
which is the integral of the inward pointing conormal of $E$ along its boundary.
In this conformal representation of $E$, we also have
\[
x_3 (\theta , t)
= t,\quad x_3(e_k) < x_3(e_{k+1})
\ \mbox{ for all $k \in \N $ and }\; \lim_{k
  \rightarrow \infty} x_3(e_k) = \infty.
  \]

Given $i\in \N $, let $t_i = \frac{1}{2}[x_3(e_i) + x_3(e_{i+1})]$.
Consider the simple closed curves
\[
\gamma(i) = x_3{^{-1}}(\{ t_i\} ) \subset E,
\qquad
\g (0)=x_3^{-1}(\{ t_0\} )= \partial E.
\]
 Let $S$ be the closed horizontal slab in $\R^3$ between
the heights $t_0$ and $t_{2g+2}$, where $g$ is the genus of $M_{1,n}$.
Observe that $\Sigma \cap S\subset E$ is a connected minimal surface
 whose boundary consists of the two simple closed planar curves
$\g (0),\g (2g+2)$,
and that $\Sigma \cap S$ has $2g+2$ horizontal planar ends.

Given $R>0$, let ${\cal C}(R) =
\{(x_1, x_2, x_3) \mid x_1^2 + x_2^2 \leq R^2\}$ be the solid cylinder of radius $R$.
For any fixed $\ve > 0$ small, there exists an $R_1$ large such that
$\g (i)$ is contained in ${\cal C}(R_1)$ for $0\leq i\leq 2g+2$, and
$(\Sigma \cap S) \setminus {\cal C}(R_1)$ consists of
$2g+2$ annular graphs which
are $\ve/2$-close  in the $C^2$-norm to the set of planes
\[
{\cal P} = \{x_3^{-1}(\{ x_3(e_1)\} ), \ldots, x_3^{-1}(\{
x_3(e_{2g+2})\} )\} .
\]

We next transport the above structure from the limit surface $\Sigma $ to the sequence of surfaces
\begin{equation}
\label{eq:A2}
\Sigma (n)=A(M_{1,n}), \quad n\in \N,
\end{equation}
that converge smoothly to $\Sigma $ as $n\to \infty $ (observe that {\it a priori,}
the $\Sigma (n)$ might fail to have horizontal ends).

     For every $R_2 > R_1$ there exists an $N=N(R_2)\in \N$
such that for $n \geq N$,
\[
\Sigma _S(n,R_2) = \Sigma(n) \cap S \cap {\cal C}(R_2)
\]
 is $\ve$-close to the planar domain
 $\Sigma _S(R_2) = \Sigma \cap S \cap {\cal C}(R_2)$
in the $C^2$-norm; in particular $\Sigma _S(n,R_2)$ is also a connected
planar domain for all $n\geq N$.
By initially choosing $\ve$ sufficiently small,
the following properties also hold for every $n\geq N$:
\begin{enumerate}[(B1)]
\item $\partial \Sigma _S(n,R_2)$ consists of $2g+4$ simple
closed curves which are arbitrarily close to $\partial \Sigma _S(R_2)$
 if $n$ is sufficiently large.
\item We can approximate the curves $\gamma(i)$  by simple
closed planar curves $\gamma(i,n)$ in $\Sigma _S(n, R_2)\cap
x_3^{-1}(\{ t_i\} )$, for all $i=0,\ldots ,2g+2$.
\item $\partial \Sigma _S(n,R_2)$ has two components $\g(0,n),\g(2g+2,n)$
lying on $\partial S $ and $2g+2$
simple closed curves
$\a _1(n),\ldots $, $\a _{2g+2}(n)\subset \partial {\cal C}(R_2)$,
which are graphs over the circle
$\partial {\cal C}(R_2)\cap \{ x_3=0\} $, ordered by their
relative heights, see Figure~\ref{fig3}.
\end{enumerate}
\begin{figure}
\begin{center}
\includegraphics[width=14.5cm]{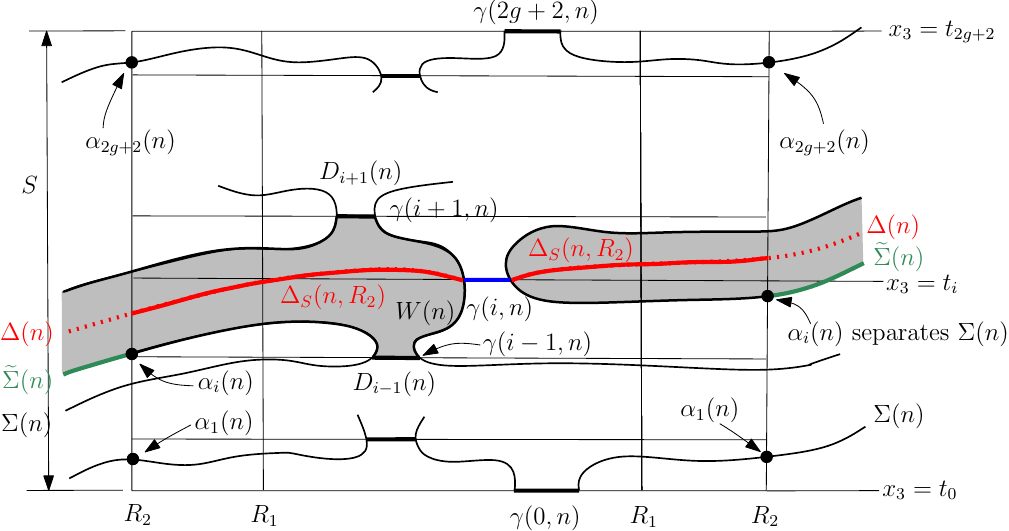}
\caption{Producing the stable minimal surface $\Delta (n)$ in $W$.}
\label{fig3}
\end{center}
\end{figure}

An elementary argument shows that in a compact surface with genus $g$ and empty boundary,
any connected planar subdomain with $2g+4$ boundary
components has at least four boundary components
that separate the surface. Since $\Sigma (n)$
has genus $g$ and $\Sigma _S(n,R_2)$ is a connected
compact planar domain with $2g+4$ boundary components,
there exists an $i\in \{1,\ldots ,2g+1\}$
such that the corresponding curve $\a _i(n)$ separates $\Sigma (n)$.

\begin{assertion}
\label{ass3.2}
The simple closed curve $ \g (i,n)$ separates $\Sigma (n)$.
\end{assertion}
{\it Proof of the assertion.}
Let $\widetilde{\Sigma }(n)$ be the component of $\Sigma (n)\setminus \a _i(n)$ that is disjoint
from $\Sigma _S(n,R_2)$.
  Note that there is a disk in ${\cal C}(R_2)$ bounded by $\a _i(n)$
which only intersects $\Sigma (n)$ along $\a _i(n)\cup \g (i,n)$. The union of this
disk with $\widetilde{\Sigma }(n)$ is a properly embedded, piecewise smooth
surface in $\R^3$. After a
slight perturbation of this surface in a small neighborhood of $\widetilde{\Sigma }(n)$,
we obtain a connected properly embedded surface $\Omega (n)\subset \R^3$ which intersects
$\Sigma (n)$ only along $\g (i,n)$. Since properly embedded surfaces in $\R^3$
separate $\R^3$, we deduce that  $ \g (i,n)$ separates $\Sigma (n)$, which proves the assertion.

Note that we can assume that $\Omega(n)\setminus \g(i,n)$ consists of two components, one of which is
a planar disk contained in $S\cap \cC(R_1)$.
  {\hfill\penalty10000\raisebox{-.09em}{$\Box$}\par\medskip}

Consider the surface $\Omega (n)$ defined in the proof of Assertion~\ref{ass3.2}.
Let $W(n)$ be the closed complement of $\Sigma (n)$ in $\R^3$ that intersects
$\Omega (n)$ in a non-compact connected surface 
with boundary $\g (i,n)$. Denote by
\[
D_{i-1}(n)\subset x_3^{-1}(\{ t_{i-1}\} ),\quad D_{i+1}(n)\subset x_3^{-1}(\{
t_{i+1}\} ),
\]
 the planar disks bounded respectively by $\g (i-1,n),\g (i+1,n)$.
Observe that
  $\partial W(n)\cup D_{i-1}(n)\cup D_{i+1}(n)$ is a
good barrier for solving Plateau problems in the abstract
Riemannian piecewise smooth three-manifold $N(n)$ obtained as the metric completion of the interior of
$W(n)$ (in particular, the planar horizontal disks $D_{i-1}(n),D_{i+1}(n)$ could appear twice in
the boundary of $N(n)$), see~\cite{my2}.
Also note that $\g (i,n)$ separates the boundary of $N(n)$ since
the surface $\Omega (n)\cap N(n)$ separates $N(n)$ and $\partial
[\Omega (n)\cap N(n)]=\g (i,n)$.
Thus, a standard argument
(see e.g., Lemma~4 in Meeks, Simon and Yau~\cite{msy1})
using a compact exhaustion of  the closure of one of the components of $\partial N(n)\setminus \g (i,n)$
 implies that we can find a connected,
orientable, non-compact, properly
embedded least-area surface
\[
\Delta (n)\subset W(n)\setminus [D_{i-1}(n)\cup
D_{i+1}(n)]\subset \rth, \ \mbox{with}\ \ \partial \Delta (n)=\g (i,n).
\]
 By the main result in Fischer-Colbrie~\cite{fi1},
 $\Delta(n)$ has finite total curvature and hence it has
a positive finite number of planar and catenoidal ends that lie in $W(n)$. By the
definition and the uniqueness of the limit tangent plane at infinity~\cite{chm3},
we deduce
that the limiting normal vectors to the ends of $\Delta (n)$ are parallel to
the limiting normal vectors to the ends of $\Sigma(n)$.

Let $\Delta_S(n, R_2)$ be the component of $
 \Delta(n) \cap S \cap {\cal C}(R_2)$ whose boundary contains $\g (i,n)$. Note that the
 other boundary components of $\Delta _S(n,R_2)$
 all lie on $\partial {\cal C}(R_2)\cap W(n)$. For $R_1$ large and $R_2\gg R_1$,
  curvature estimates for stable minimal surfaces~\cite{sc3}
imply that $\Delta _S(n, R_2) \setminus  {\cal C}(R_1)$ consists of almost-horizontal annular graphs,
for $n$ sufficiently large. By the
area-minimizing property of $\Delta (n)$, we have that
for $R_2$ much larger than
$R_1$ and $n$ sufficiently large, there is only one
such almost-horizontal graph, which can be assumed to be oriented by the upward
pointing normal.

Let $\widetilde{\Delta}(n)$ be the closure of $\Delta(n) \setminus  \Delta_S(n,R_2)$. Since
$\Delta (n)$ has finite total curvature, then $\widetilde{\Delta}(n)$ compactifies after
attaching its ends to a compact Riemann surface $\overline{\widetilde{\Delta }(n)}$ with
boundary, and the Gauss map $G_n\colon \widetilde{\Delta }(n)\to \esf^2$ extends
smoothly across the ends to $G_n\colon \overline{\widetilde{\Delta }(n)}\to \esf^2$,
with values at the ends that lie in a pair of antipodal points $\pm a\in \esf^2$.
Observe that $\widetilde{\Delta }(n)$ is strictly stable (because $\Delta (n)$
is stable and $\Delta (n,R_2)$ has positive area), and so, $\overline{\widetilde{\Delta }(n)}$
is also strictly stable.  Since
$\widetilde{\Delta}(n)$ is almost-horizontal along its boundary, then
$G_n(\partial \widetilde{\Delta}(n))$ is contained in a small
neighborhood $Q(n,R_2)$ of $(0, 0,1)$ in $\esf ^2$.
Since the Gaussian image of the ends of
$\widetilde{\Delta}(n)$ is contained in $\{ \pm a\} $ and $G_n$ is either constant or
an open map, then we deduce that either $G_n(\overline{\widetilde{\Delta}(n)})\subset Q(n,R_2)$,
or else the interior of $G_n(\overline{\widetilde{\Delta}(n)})$ contains the horizontal equator $\esf^2\cap \{ z=0\} $
for $n$ large enough. The last possibility contradicts that $\overline{\widetilde{\Delta }(n)}$ is strictly stable,
 as the inner product of $G_n$ with $(0,0,1)$ is a Jacobi function on
 $\overline{\widetilde{\Delta }(n)}$ whose zero set does not intersect
the boundary $\partial \overline{\widetilde{\Delta }(n)}$.
Therefore, $G_n(\widetilde{\Delta}(n))\subset Q(n,R_2)$. Note that as $n$ and $R_2$ approach
$\infty $, $Q(n,R_2)$ limits to be $(0,0,1)$.
Applying Lemma~1.4 in~\cite{mr8}, it follows that $\widetilde{\Delta}(n)$ is
a connected graph over its projection to the $(x_1,x_2)$-plane and the ends of
$\Sigma(n)$ are horizontal.  This implies that the rigid motion $A$
that appears in equations (\ref{eq:A1}), (\ref{eq:A2})
can be taken to be the identity map; in particular, $L_1$
has horizontal limit tangent plane at infinity.

Since $\g (i,n)$ separates $\Sigma (n)$ (by Assertion~\ref{ass3.2}) and the
ends of $\Sigma (n)$ are horizontal, then
$\g (i,n)$ has vertical flux
vector, in the sense that the integral of the unit conormal vector to
$\Sigma (n)$ along $\g (i,n)$ is a vertical vector.
As $\Sigma (n)$ converges
$C^2$ to $\Sigma $ as $n\to \infty $, and $\g (i,n)$ limits to
$\g (i)$, then $\g (i)$ also has vertical flux vector. But Theorem~6 in~\cite{mpr3}
implies that $\Sigma $
does not have vertical flux vector along such a separating curve. This contradiction
finishes the proof of the lemma.
\end{proof}

\begin{lemma}
\label{lemma3}
$S({\cal L}_1) =\varnothing$.
\end{lemma}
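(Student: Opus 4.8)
The plan is to argue by contradiction: suppose $S(\mathcal{L}_1)\neq\emptyset$. By item~\ref{itt2} of Theorem~\ref{structurethm}, the convergence of $\{M_{1,n}\}_n$ to $\overline{\mathcal{L}_1}$ then has the structure of a horizontal limiting parking garage structure, so $\overline{\mathcal{L}_1}$ is the foliation of $\R^3$ by horizontal planes and $\overline{S(\mathcal{L}_1)}$ consists of one or two vertical straight lines (the columns). The first order of business is to rule out the one-column case. Here I would invoke the analogue of the mechanism in Remark~\ref{remark1}: property $(\star\star)$ supplies, for every $n$, a homotopically nontrivial simple closed curve $\beta(n)\subset M_{1,n}\cap\partial\B(1)$ in the non-simply-connected component $\Omega_n$ of $M_{1,n}\cap\overline{\B}(1)$. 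A single-column limiting parking garage structure forces the portion of $M_{1,n}$ in a fixed ball to be a double multigraph winding around one axis, hence simply connected in $\B(1+\epsilon)$ for large $n$ — so $\beta(n)$ would bound a disk there, contradicting $(\star\star)$. Concretely I would cite hypothesis~(0.5)-type reasoning from~\cite{cm25} exactly as flagged in Remark~\ref{remark1}. So $\overline{S(\mathcal{L}_1)}$ consists of exactly two vertical lines, with oppositely-handed pairs of multigraphs forming inside $M_{1,n}$ around each.

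Next I would extract the topological contradiction from the two-column picture together with the fixed genus bound. Fix a large $R>0$ whose boundary sphere is transverse to all $M_{1,n}$ (Sard), and consider $M_{1,n}\cap\overline{\B}(R)$ for large $n$. Away from small tubular neighborhoods of the two columns, $M_{1,n}\cap\overline{\B}(R)$ converges smoothly to a stack of horizontal flat disks; inside each of the two solid-cylinder neighborhoods of the columns, $M_{1,n}$ looks like a highly-sheeted double multigraph. The key quantitative input is that the number of sheets in these multigraphs tends to $\infty$ as $n\to\infty$ (this is part of what "highly sheeted" in item~2.2 means, via the Colding–Minicozzi one-sided/lamination machinery underlying Theorem~1.6 of~\cite{mpr11}). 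A double multigraph with $m$ sheets glued to the horizontal disks outside contributes on the order of $m$ to the first Betti number of the compact surface $M_{1,n}\cap\overline{\B}(R)$ (each pair of consecutive sheets, joined through the disk region and through the multigraph spine, produces a handle or an extra boundary circle linking). Since $M_{1,n}$ has finite total curvature with genus $\le g$ and a *fixed* number-independent-of-$R$ genus, but the number of ends of $M(n)$ is allowed to grow, I must be careful: the growth I want to exploit is the genus contribution, not the end contribution. The precise statement I would use is that a parking garage column with unboundedly many sheets forces either unbounded genus or unbounded number of ends *accumulating in a fixed ball*, and the latter is impossible because $M_{1,n}$ has finite total curvature so all its ends are at infinity (they are planar/catenoidal horizontal ends, hence eventually graphical over a fixed-size annulus — they cannot cluster inside $\overline{\B}(R)$). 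Therefore the sheet count inside $\overline{\B}(R)$ is controlled by the genus, which is $\le g$; but the parking garage structure says this count $\to\infty$. This contradiction proves $S(\mathcal{L}_1)=\emptyset$.

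The main obstacle I anticipate is making the middle step — "$m$ sheets in the column $\Rightarrow$ first Betti number $\gtrsim m$ inside a fixed ball" — genuinely rigorous rather than heuristic, because $M_{1,n}$ is a priori only embedded, not a graph, in the transition region between the flat-disk regime and the multigraph regime, and one needs the Colding–Minicozzi structure theory to control exactly how the sheets connect up. I would handle this by applying the doubly-indexed compact truncation trick already used in the proof of Theorem~\ref{structurethm}: replace $\{M_{1,n}\}_n$ by the sequence of compact surfaces $M_{1,n}\cap\overline{\B}(R_n)$ with a slowly growing $R_n$, so that item~7.3 of Theorem~1.6 in~\cite{mpr11} applies directly and delivers the parking garage structure together with its sheet-counting/genus-counting bookkeeping. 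A second, more minor point requiring care is the exclusion of the one-column case when $\beta(n)$ might a priori sit in a sheet that does *not* wind all the way around the axis; I would rule this out by choosing the radius $\delta\in(1,2)$ as in Lemma~\ref{lemanew} and arguing, via the convex hull property and the graphical nature of the sheets over the punctured disk, that any simple closed curve in $M_{1,n}\cap\partial\B(1)$ bounds a disk in $M_{1,n}\cap\overline{\B}(\delta)$ once $n$ is large — precisely the contradiction with $(\star\star)$ that closes the one-column case.
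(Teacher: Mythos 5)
Your proposal diverges from the paper's proof in both halves, and the half that carries most of the weight does not work. For the two-column case, your key step is the dichotomy ``a column with unboundedly many sheets forces either unbounded genus or unboundedly many ends accumulating in a fixed ball,'' and you then exclude the second alternative because the ends of $M_{1,n}$ are at infinity. That dichotomy is false: a highly sheeted double multigraph inside $\overline{\B}(R)$ is perfectly compatible with genus zero, because the sheets need not close up inside the ball --- they simply produce many boundary circles of $M_{1,n}\cap\overline{\B}(R)$ on $\partial \B(R)$, which connect outside the ball to ends located at infinity. Suitably rescaled Riemann minimal examples (and indeed the surfaces $M_{1,n}$ themselves, whose number of ends diverges while the genus stays $\leq g$) realize exactly this picture: sheet count in a fixed ball tending to infinity, genus zero, no ends clustering in any compact set. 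So no contradiction with the genus bound can be extracted from sheet counting alone. The paper's two-column argument is of a completely different nature: it exploits the oppositely handed columns to reproduce the two-limit-end geometry of Lemma~\ref{lemma2} --- connection loops $\gamma(i,n)$ joining the columns inside a slab, a connected planar subdomain with $2g+4$ boundary curves forcing one of them to separate $M_{1,n}$, a Plateau/stability argument producing an almost-horizontal graphical least-area surface, and finally a flux contradiction (the separating curve has vertical flux, contradicting Theorem~6 in~\cite{mpr3} in the limit). None of this is replaceable by Betti-number bookkeeping of the parking garage.

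For the one-column case there is also a gap, though of a different kind: you assert that a single-column limiting parking garage structure forces $M_{1,n}\cap\B(1+\epsilon)$ to consist of disks for $n$ large, so that $\beta(n)$ from property $(\star\star)$ bounds a disk. But the convergence to the foliation is smooth only away from the column, and Theorem~\ref{structurethm} gives no a priori control of how the two multivalued graphs connect along the column inside $\overline{\B}(1)$; ruling out concentration of nontrivial loops there is precisely what must be proved, and citing hypothesis~(0.5) of~\cite{cm25} via Remark~\ref{remark1} does not supply the argument (that theorem concerns compact planar domains with boundary on large spheres, which is why the paper reproves the exclusion here). The paper's mechanism is concrete: bound the injectivity radius of $M_{1,n}$ along $\beta(n)$ above and below (the upper bound via Proposition~8 of~\cite{cm35}, since otherwise $\beta(n)$ lies in a disk component), produce geodesic loops $\G_n$ of uniformly bounded length based on $\beta(n)$, show their limit set must lie in the single singular line (a limit segment in a planar leaf would have to meet $S({\cal L}_1)$ twice), and then contradict the chord-arc bound of Theorem~0.5 in~\cite{cm35}. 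Your sketch contains no substitute for this step, so as written both cases of the proof remain open.
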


\begin{proof}
Reasoning by contradiction, assume that
$S({\cal L}_1)\neq \varnothing$. By item~2 of Theorem~\ref{structurethm},
${\cal L}_1$ is a foliation of $\R^3$ by parallel planes and
$S({\cal L}_1)$ consists of one or two  lines that are orthogonal to ${\cal L}_1$.

Suppose that $S({\cal L}_1)$ consists of a single line. By property $(\star \star)$,
$M_{1,n}\cap \overline{\B }(1)$ contains a piecewise-smooth simple closed curve $\beta(n)$
that does not bound a disk in $M_{1,n}$. We claim that the restriction of the
injectivity radius function $I_{M_{1,n}}$ of $M_{1,n}$
to $\beta(n)$ is bounded from below by some
$I_0>0$ and from above by some $I_1>0$, where both constants do not depend on $n$.
Note that $I_0\geq 1$ since any
open  ball in $\rth$ of radius 1 intersects $M_{1,n} $ in disks
of non-positive Gaussian curvature with their boundaries in
the boundary of the ball.  If $I_1$ fails to exist, then, after replacing by a subsequence,
there exist points $x_n\in \beta(n)$ such that
$I_{M_{1,n}}(x_n)>n$.
In particular,
by Proposition~1.1 in Colding and Minicozzi~\cite{cm35}
 for $n$ sufficiently
large, $x_n$ is contained in a component of
$M_{1,n}\cap \B(x_n,3)$ that is an open disk $D_n$ with boundary in the boundary of $\B(x_n,3)$.
By the triangle inequality,
the boundary of $\B(x_n,3)$ lies outside of $\ov{\B}(1)$. Therefore, the simple closed
curve $\beta(n)\subset D_n \subset M_{1,n}$
is the boundary of a disk in $M_{1,n}$, which is a contradiction.
This contradiction proves the claim that
the values of $I_{M_{1,n}}$ restricted to $\beta(n)$ lie in an interval $[I_0,I_1]$.

A standard argument (see, for instance, Proposition~2.12, Chapter~13 of~\cite{doc2})
produces, for each $n\in \N$, a
 non-trivial geodesic loop $\G _n\subset  M_{1,n}$ parameterized by arc length,
possibly not smooth at $\G _n(0)$, such that $\G _n(0)=
\G _n(2d_n)\in \beta(n)$ (here $2d_n=2I_{M_{1,n}}(\G_n(0))$
is the length of $\G _n$),  and both arcs $\G _n|_{[0,d_n]}$,
$\G _n|_{[d_n, 2d_n]}$ minimize length among curves in $M_{1,n}$ with their extrema.
Hence, the length of $\G _n$ lies in the interval
$[2I_0,2I_1]$.
We claim that the limit set $\mbox{Lim}(\{ \G _n\} _n)$ satisfies
$\mbox{Lim}(\{ \G _n\} _n)\subset S({\cal L}_1)$. If on the contrary there exists a point
$x\in \mbox{Lim}(\{ \G _n\} _n)\setminus S({\cal L}_1)$, then the $\G _n$ converge
locally around $x$ (after extracting a subsequence) to an open straight line segment $\G(x)$
passing through $x$ and contained in one of the planes of the limit foliation ${\cal L}_1$.
As we are assuming that $S({\cal L}_1)$ consists of a single line, then the straight line
that contains $\G(x)$ cannot intersect $S({\cal L}_1)$ at two points. This contradicts that
the length of $\G _n$ is bounded from above by $2I_1$,
and proves our claim
that $\mbox{Lim}(\{ \G _n\} _n)\subset S({\cal L}_1)$. In particular,
there exist points $a_n,b_n\in \G _n$ at intrinsic distance in $M_{1,n}$ bounded away from zero,
such that $|a_n-b_n|\to 0$. This contradicts the  chord arc bound given by Theorem~0.5
of~\cite{cm35}. This contradiction proves
that $S({\cal L}_1)$ cannot  consist of a single line.

Since $S({\cal L}_1)$ consists of two disjoint lines $l_1,l_2$
that are orthogonal to ${\cal L}_1$, then item~2.2 of Theorem~\ref{structurethm}
implies that the pairs of multivalued graphs forming inside the $M_{1,n}$
around $l_1,l_2$  are
oppositely handed.
From this point on, the proof of this lemma is similar to the proof of Lemma~\ref{lemma2}.
We now outline the argument along the lines of the previous proof.

Recall that the Riemann minimal examples $\{ {\cal R}_t\}
_{t>0}$ form a one-parameter family of properly embedded, singly periodic, minimal
planar domains in $\R^3$, each one with infinitely many horizontal planar ends (see
e.g., Section~2 of~\cite{mpr6} for a precise description of these classical surfaces).
To identify the natural limit objects for each of the two ends of this one-parameter family,
one normalizes each ${\cal R}_t$ suitably:
after a certain normalization,
the surfaces ${\cal R}_t$ converge as $t\to 0$ to a vertical catenoid,
while as $t\to \infty$, one can normalize the ${\cal R}_t$
so that under two different sequences of translations,
each translated sequence converges to a vertical helicoid,
with the two forming helicoids inside
the ${\cal R}_t$ for $t$ large being symmetric by
reflection in the vertical plane of symmetry of
${\cal R}_t$; hence these helicoids have opposite handedness.
This last property implies that after shrinking
the ${\cal R}_t$ suitably and taking $t\to \infty $, the ${\cal R}_t$ limit to a
foliation ${\cal F}$ of $\R^3$ by horizontal planes, with
singular set of convergence being two vertical lines. With
our language in item~2 of Theorem~\ref{structurethm}, this limit configuration
is a limiting parking garage structure with two
oppositely oriented columns; see Figure~\ref{ULSC1}.
\begin{figure}
\begin{center}
\includegraphics[width=12cm]{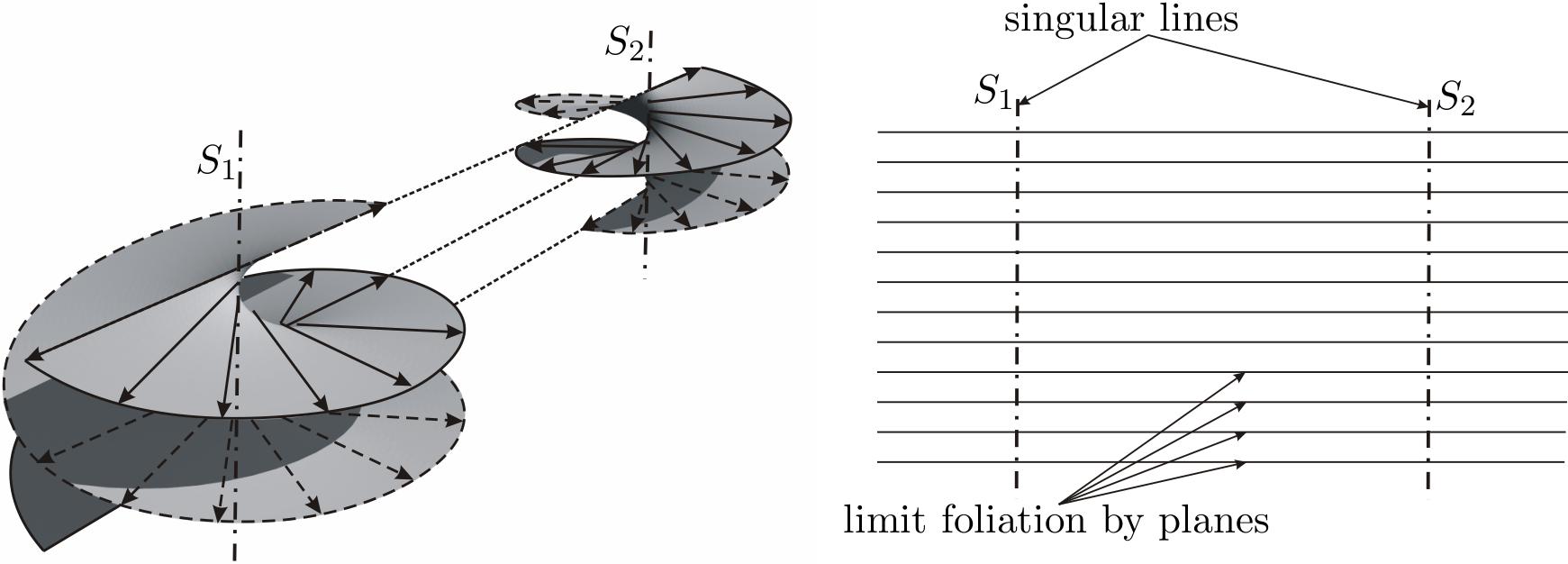}
\caption{$S_1$ and $S_2$ are the columns of the limiting parking garage structure.}
\label{ULSC1}
\end{center}
\end{figure}

In our current setting that the $\{ M_{1,n}\} _n$ converge to the foliation ${\cal L}_1$ except
at two lines $l_1,l_2$ with opposite handedness, the convergence of $\{ M_{1,n}\} _n$ to ${\cal L}_1$ has
the same basic structure as a two-limit-end example. More precisely, let
${\cal C}(R_1)$ be a solid cylinder of radius $R_1$ that contains $S({\cal L}_1)$ in its interior.
Consider the intersection of $M_{1,n}$ with an open slab $S$ bounded by two planes in
${\cal L}_1$. For $R_2>R_1$ and $n$ sufficiently large,
the part of $M_{1,n}\cap S$ in ${\cal C}(R_2)\setminus {\cal C}(R_1)$ contains
an arbitrarily large number of annular graphs
with boundary curves in the cylinders $\partial {\cal C}(R_1), \partial {\cal C}(R_2)$
and these graphs are almost-parallel to the planes in
${\cal L}_1$; note that $R_1$ can be taken large enough
so that the columns of the limiting parking garage structure are contained in $\cC(R_1/2)$.
The remaining components of $M_{1,n}\cap S\cap [{\cal C}(R_2)\setminus {\cal C}(R_1)]$
are graphical almost-horizontal disk components
that are contained in  arbitrarily small neighborhoods of $\partial S$ as $n$ tends to infinity.
Furthermore, inside ${\cal C}(R_1)\cap S$ we can find as many of the related
curves $\gamma(i,n) \subset M_{1,n}$ from the proof of Lemma~\ref{lemma2} as we
desire (take the $\g (i,n)$ as
`connection loops' that converge to a straight line segment joining $l_1,l_2$ and
orthogonal to these lines).
Carrying out the arguments in the proof of the Lemma~\ref{lemma2} we obtain
a contradiction, which proves Lemma~\ref{lemma3}.
\end{proof}

So far we have proven that, after passing to a subsequence,  the surfaces $M_{1,n}$
converge smoothly with multiplicity $1$ to a
connected, properly embedded minimal surface $L_1$ in one of the
following two cases.
\begin{enumerate}[(C1)]
    \item $L_1$ is a one-ended surface with positive genus less
    than or equal to $g$ and $L_1$ is asymptotic to a
    helicoid.
    \item $L_1$ has finite total curvature, genus at most $g$ and at least two ends.
\end{enumerate}

\subsection{Surgery in the first stage.}
\label{surgery1stage}
We start this section with a lemma to be used later.
\begin{lemma}
\label{ass2.5}
  Let $\Sigma \subset \R^3$ be a properly embedded minimal surface of finite genus
  and one end.
Then, for $R>0$ sufficiently large, $\partial \B (R)$ intersects $\Sigma $
transversely in a simple closed curve and $\Sigma \setminus \B (R)$ is an annular end
representative of the unique end of $\Sigma $.
Furthermore, given $\ve>0$, $R$ can be chosen large enough so that this end
representative is $\ve $-close in the $C^2$-norm
to an end of a helicoid.

\end{lemma}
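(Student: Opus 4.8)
The plan is to combine the known asymptotic classification of properly embedded minimal surfaces with finite genus and one end with a compactness/blow-down argument to upgrade ``asymptotic to a helicoid'' into the explicit $C^2$-control on large spheres claimed in the statement. First I would invoke the Bernstein--Breiner theorem~\cite{bb2} (equivalently Meeks--P\'erez~\cite{mpe3}): a properly embedded minimal surface $\Sigma\subset\R^3$ of finite genus and one end is conformally a once-punctured compact Riemann surface, it has finite type, and outside a compact set it is a multigraph asymptotic (in the $C^k$-sense on a neighborhood of the puncture, for every $k$) to a vertical helicoid $\mathcal{H}$, after a rigid motion. In particular $\Sigma$ has a unique, properly embedded annular end, and the Gauss map of $\Sigma$ extends holomorphically to the puncture with a pole (the normal spirals around the equator infinitely often but converges uniformly, on the conformal annulus, to the Gauss map of $\mathcal{H}$).

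Next I would pass from this intrinsic conformal statement to the extrinsic statement about $\partial\B(R)$. Since $\Sigma$ has finite total curvature away from nowhere --- but it does have bounded Gaussian curvature on its end, by the asymptotic-helicoid property --- the end $E$ of $\Sigma$ is a graph, in suitable ``helicoidal'' coordinates, of a function decaying together with all derivatives as one moves out along the spiral. Concretely, parametrize a neighborhood of the puncture by $\mathbb{S}^1\times[\rho_0,\infty)$; in this parametrization $x_3$ and the log-radial coordinate $\log\sqrt{x_1^2+x_2^2}$ are proper, the image leaves any compact set of $\R^3$, and the surface converges in $C^2_{\mathrm{loc}}$ to $\mathcal{H}$ after rescaling back each piece. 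From this it is routine that for all $R$ large, $\partial\B(R)$ meets $E$ transversally (Sard plus the fact that $|\nabla^\Sigma\, r^2|>0$ for $r$ large because the end is a multigraph, not nearly tangent to spheres) in exactly one simple closed curve, and that $\Sigma-\B(R)$ is exactly this end representative: any other component of $\Sigma-\B(R)$ would have to lie in the compact part, which is impossible once $R$ exceeds the radius of a ball containing $\Sigma\setminus E$.

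For the final ``$\ve$-close to an end of a helicoid'' clause I would argue by contradiction and compactness, which is the main obstacle, since one must produce the $C^2$ bound uniformly over the whole annular end, not just on fixed compact pieces. Suppose not; then there is $\ve>0$ and $R_n\to\infty$ with $\Sigma-\B(R_n)$ not $\ve$-close in $C^2$ to any helicoidal end. Because $\Sigma$ has bounded curvature on its end and is a multigraph there, the standard Colding--Minicozzi / Meeks--Rosenberg blow-down analysis (as used in~\cite{mr8,bb2}) shows the rescaled surfaces $\tfrac{1}{R_n}(\Sigma-\B(R_n))$ converge $C^2$ on compact subsets of $\R^3\setminus\{0\}$ to (a piece of) the vertical axis's helicoidal foliation, i.e.\ the blow-down limit is a helicoid; transferring this convergence back through the scaling and using that the curvature decay forces the graphical pieces at scale $R_n$ to be uniformly close to the model, one gets that $\Sigma-\B(R_n)$ is $C^2$-close to a helicoidal end for $n$ large --- contradiction. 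Equivalently, and perhaps more cleanly, one reads this directly off the $C^\infty$ convergence of the conformal end parametrization to that of $\mathcal{H}$ near the puncture: $\ve$-closeness in $C^2$ of the truncated end to a helicoidal end is an immediate consequence once $R$ is large enough that the conformal parameter $\rho$ on $\partial\B(R)\cap\Sigma$ exceeds the threshold provided by the asymptotic expansion. I expect the only genuinely delicate point to be checking that the $C^2$ norm is controlled \emph{uniformly along the spiral}, which is exactly what the Bernstein--Breiner asymptotic expansion supplies, so I would cite that rather than reprove it.
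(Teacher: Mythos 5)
Your high-level route is the same as the paper's: invoke the Bernstein--Breiner/Meeks--P\'erez helicoidal asymptotics and extract both the sphere-intersection structure and the $C^2$-closeness from the asymptotic (Weierstrass) description of the end; the paper does precisely this, using the explicit data $g(z)=e^{iz+f(z)}$, $dh=dz$. The genuine gap is in the transversality step, which is the actual technical content of the first sentence. Sard's theorem only gives transversality for almost every $R$, whereas the lemma needs it for \emph{every} sufficiently large $R$, so your appeal to Sard does no work; everything rests on your parenthetical claim that $|\nabla^{\Sigma}|x|^2|>0$ outside a compact set ``because the end is a multigraph, not nearly tangent to spheres.'' That justification only covers the portion of the end inside a cone $C(\de)=\{x_1^2+x_2^2>\de^2x_3^2\}$, where the surface is a nearly horizontal multigraph and the radial direction of $\partial \B(R)$ has a definite horizontal component. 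Near the axis of the asymptotic helicoid, i.e.\ in a fixed solid cylinder $\{x_1^2+x_2^2\le R_1^2\}$ at heights comparable to $R$, the end is not a horizontal multigraph at all and the sphere $\partial \B(R)$ is nearly horizontal there; transversality in that region requires a separate argument. The paper supplies it: under divergent vertical translations $\Sigma$ subconverges to a vertical translate of the model helicoid, whose tangent planes near the axis are nearly vertical, while the unit normals of $\partial \B(R)$ at the relevant points become vertical as $R\to\infty$; it also uses the description of the horizontal level curves (property (D3)) to control the intermediate region. Without some such case analysis your transversality claim is unproven. A smaller omission: you argue that $\Sigma-\B(R)$ lies in the end $E$, but not that $\Sigma\cap\partial\B(R)$ is a \emph{single} circle; the paper reduces this to transversality via elementary Morse theory for perturbations of $|x|^2$, which is strictly subharmonic on a minimal surface and so has no critical points of index $2$.

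Separately, the blow-down compactness argument you propose for the $\ve$-closeness is not the right tool: rescalings $\frac{1}{R_n}\Sigma$ of a helicoidal end do not converge to a helicoid but to the foliation by horizontal planes away from the axis, and $C^2$-closeness of the rescaled surfaces on compact sets does not transfer back to $C^2$-closeness of $\Sigma-\B(R)$ to a fixed helicoidal end at the original scale. Your alternative suggestion---reading the uniform $C^2$ control along the spiral directly off the asymptotic expansion/conformal parametrization of Bernstein--Breiner and Meeks--P\'erez---is the correct argument and is exactly what the paper means when it says the second sentence follows from the Weierstrass representation (\ref{eq:WD}).
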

\begin{proof}
Since arbitrarily small perturbations of the square of the distance function of $\Sigma$ to the origin in $\rth$
can be chosen to have only
non-degenerate critical points of index less than 2,  elementary Morse theory
implies that for the first sentence of the lemma to hold
it suffices to prove that for $R>0$ sufficiently large, $\partial \B (R)$ intersects $\Sigma $
transversely. Since $\Sigma $ is a properly embedded minimal surface of finite genus and one end, then $\Sigma $
is asymptotic to a helicoid (Bernstein and Breiner~\cite{bb2} or Meeks and P\'erez~\cite{mpe3}); in particular,
after a rotation in $\R^3$ we may assume that for some $R>0$ large and $\de >0$,
the intersection of $\Sigma \setminus \B (R)$ with the region
\[
C(\de )=\{ (x_1,x_2,x_3)\in \R^3\ | \ x_1^2+x_2^2>\de ^2x_3^2\}
\]
consists of two multivalued graphs over their projections to the $(x_1,x_2)$-plane
with norm of their gradients
less than 1, and given $\ve >0$, $R$ can be taken sufficiently large so that
the norm of the gradients of these multivalued graphs are less than
$\ve $. Furthermore, the analytic description in~\cite{hkp1,mpe3} ensures that
there exists a conformal parameterization $X\colon D(\infty ,\rho)\to \R^3$ of an end
representative of the end of $\Sigma $, with associated Weierstrass data
\begin{equation}
\label{eq:WD}
g(z)=e^{iz+f(z)},\quad dh=dz,
\end{equation}
where $D(\infty ,\rho)=
\{ z\in \C \ | \ |z|\geq \rho )$, $f$ is a holomorphic function in $D(\infty ,\rho)$
that extends across $z=\infty $ with
$f(\infty )=0$, $g$ is the stereographically projected
Gauss map of $X$ and $dh$ denotes its height differential.
This analytic description implies that the following properties hold:
\begin{enumerate}[(D1)]
\item $[\Sigma \setminus \B (R)]\cap C(\de )$ is  transversal to $\partial \B(R')$ for every $R'>R$.
\item $\Sigma \setminus \B (R)$ is transverse to every horizontal plane $\{ x_3=t\} $, for all
$t\in \R $.
\item For $|t|$ sufficiently large, $\be _t:=[\Sigma \setminus \B (R)]\cap \{ x_3=t\} $ consists of a
smooth, proper Jordan arc at distance less than 1 to the axis of
the helicoid $H$ to which $\Sigma $ is asymptotic, and the tangent lines to $\be_t$
are arbitrarily close to the constant value determined by the intersection straight
line $H\cap \{ x_3=t\} $.
\end{enumerate}
Property (D3) ensures that circles in $\{ x_3=t\} $
centered at $(0,0,t)$ and radii $r>R_0$ are transverse to
$\be _t$
for $|t|,R_0$ sufficiently large. This implies that
there exists  $R_1>R_0$ large such that for $R$ sufficiently large,
$\partial \B(R)$ is transverse to
$\Sigma \setminus \{ (x_1,x_2,x_3)\ | \ x_1^2+x_2^2\leq R_1^2\} $.
Finally, transversality of $\partial \B(R)$
to $\Sigma \cap \{ x_1^2+x_2^2\leq R_1^2\} $ for $R$ large follows from the fact that
 under any divergent sequence of vertical translations  of $\Sigma $, a subsequence converges to
some vertical translation of  $H$ and, as $R\to \infty$, the unit normal vectors to $\partial \B(R)$
at points of $\Sigma\cap \partial \B(R)\cap\{ x_1^2+x_2^2\leq R_1^2\} $ are
converging to vertical unit vectors.
This finishes the proof of the
first sentence of the lemma. The second sentence follows from the Weierstrass
representation equation (\ref{eq:WD}).
\end{proof}

Given $R_1>0$, let $L_1(R_1)=L_1\cap \overline{\B }(R_1)$.
By Lemma~\ref{ass2.5}, we can take
$R_1$ sufficiently large so that
in case~{(C1)} above, $L_1\setminus L_1(R_1)$ consists of an annular
end representative of the unique end of  $L_1$,
and this annular end representative is $\ve $-close in the
$C^2$-norm
to an end of a helicoid $H\subset \R^3$
for any fixed $\ve >0$ small. In case (C2), the asymptotic geometry of embedded ends with
finite total curvature allows us to assume that
$L_1\setminus L_1(R_1)$ consists of a finite collection
of annular end representatives of the at least two ends of
$L_1$, each one $\ve $-close in the $C^2$-norm
to the end of a plane or of a catenoid.

We will perform the following replacement of $L_1(R_1)$:
\begin{enumerate}[(E1)]
\item Suppose that $L_1$ is in case~{(C1)} above. Replace $L_1(R_1)$ by
a smooth disk that is a small normal graph over its projection to the helicoid
$H$, and so that the union of this new
piece with $L_1\setminus L_1(R_1)$ produces a smooth, properly embedded
surface $\widetilde{L}_1\subset \R^3$. For
$R_1$ large, this replacement can be made in such a way that
$\widetilde{L}_1$ is $\ve $-close to $H$ in the $C^2$-norm
on compact sets of $\R^3$
for an arbitrarily small $\ve >0$ (after choosing $R_1$ sufficiently large).
\item Now assume $L_1$ is in case~{(C2)}.
Then, for $R_1$ large,  $L_1\setminus L_1(R_1)$ consists of a finite
number $r\geq 2$ of non-compact, annular minimal graphs over the
limit tangent plane at infinity of $L_1$, bounded by $r$ closed
curves which are almost-parallel and logarithmically close in terms
of $R$ to an equator on $\partial \B (R_1)$. Replace
$L_1(R_1)$ by $r$ almost-flat parallel disks contained in $\B (R_1)$
so that the resulting surface, after gluing these disks to
$L_1\setminus L_1(R_1)$, is a possibly disconnected, smooth, properly embedded
surface $\widetilde{L}_1\subset \R^3$. This replacement is made
so that $\widetilde{L}_1\cap \overline{\B }(R_1)$ has arbitrarily small second
fundamental form (after choosing $R_1$ sufficiently large).
\end{enumerate}
In either of the cases (E1), (E2), note that the surface
$\widetilde{L}_1$ is no longer minimal, but it is
minimal outside $\ov{\B }(R_1)$.

We finish this first stage by performing the surgery on the surfaces $M_{1,n}$
that converge to $L_1$.
For $n$ large, the surface $M_{1,n}(R_1)=M_{1,n}\cap \overline{\B }(R_1)$
can be assumed to be
arbitrarily close to $L_1(R_1)$ in the $C^2$-norm. Modify $M_{1,n}$
in $\overline{\B }(R_1)$ as
we did for $L_1$ to obtain a new smooth, properly embedded surface
$\widetilde{M}_{1,n}$ that is $C^2$-close to $\widetilde{L}_1$ in
$\overline{\B }(R_1)$. Observe that if $L_1$ is in case (C1), then
$\widetilde{M}_{1,n}$ is connected and has the same number of ends
as $M_{1,n}$, while if $L_1$ is in case (C2), then $\widetilde{M}_{1,n}$
might fail to be connected, and the total number of ends of the connected
components of $\widetilde{M}_{1,n}$ is equal to the number of ends of
$M_{1,n}$ (see Figure~\ref{fig4} for a topological representation
of this surgery procedure).
\begin{figure}
\begin{center}
\includegraphics[width=13cm]{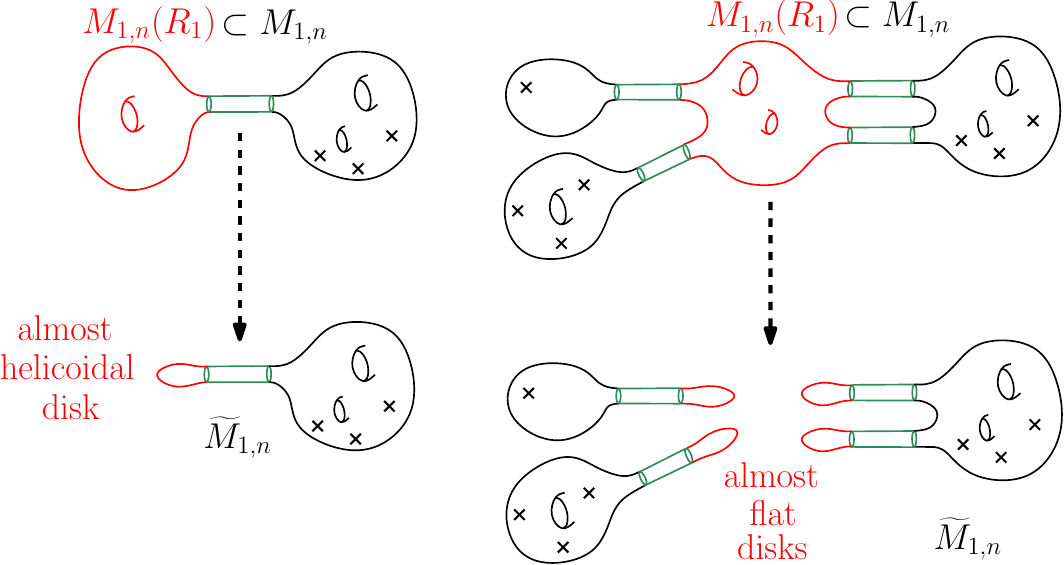}
\caption{Topological representation of the surgery in the first stage, where
the ends are represented by crosses. On the left,
replacement (E1) is done on $M_{1,n}$, thus
the compact surface
$M_{1,n}(R_1)$ (above, red) has only one boundary component and is replaced
by an almost helicoidal disk (below, red); after smoothing along a transition region
(represented by a green annulus), the resulting surface $\widetilde{M}_{1,n}$
is connected and has the same number of ends
as $M_{1,n}$. On the right, replacement (E2) occurs; in this
example, the limit surface $L_1$ has genus 2 and 4 ends,
each end producing a boundary component of
$M_{1,n}(R_1)$ that is glued after the surgery to an almost flat disk. The resulting surface
$\widetilde{M}_{1,n}$ disconnects into three components, with 1, 3 and 3 ends
respectively. Observe that replacement (E1) strictly decreases the genus of $\widetilde{M}_{1,n}$.
}
\label{fig4}
\end{center}
\end{figure}

\subsection{Rescaling by non-trivial topology in the second stage.}
\label{rescaling2stage}
Since the number of ends of $\widetilde{M}_{1,n}$ is unbounded as $n\to \infty $,
then $\widetilde{M}_{1,n}$
has some component that is not simply connected for $n$ large.
As $\widetilde{M}_{1,n}$ has
catenoidal or planar ends, then for $n$ large
 there exists a largest positive number
$r_{2,n}$ such that for every open ball $B$ in $\R^3$ of radius $r_{2,n}$,
every simple closed curve in $\widetilde{M}_{1,n}\cap B$
bounds a disk in $\widetilde{M}_{1,n}$ (but not necessarily inside $B$
as $\widetilde{M}_{1,n}$ does not necessarily satisfy the convex hull property),
and there exists a closed ball of radius $r_{2,n}$
centered at a point $T_{2,n}\in \R^3$ whose
intersection with $\widetilde{M}_{1,n}$ contains a simple closed curve that is
homotopically non-trivial in $\widetilde{M}_{1,n}$.

Since for $n$ large, every component of $\widetilde{M}_{1,n}\cap \B (2R_1)$ is
simply connected and we can assume $R_1\geq
2$, then every simple closed curve homotopically non-trivial on $\widetilde{M}_{1,n}$
that is contained in a ball of radius 2 is necessarily disjoint from $\B (R_1)$.
By the last paragraph, there exists a simple closed curve $\G \subset
\widetilde{M}_{1,n}$ that is homotopically non-trivial in
$\widetilde{M}_{1,n}$ and that is contained in a closed ball
of radius $r_{2,n}$. We next check that $r_{2,n}\geq 1$. It clearly suffices to show that this
inequality holds assuming that $r_{2,n}\leq 2$; in this case, the previous argument
implies that $\B(T_{2,n},r_{2,n})$ does not intersect $\B (R_1)$,
and so, $\G\subset M_{1,n}\cap \B(T_{2,n},r_{2,n})$.  Note that
$\G $ is homotopically non-trivial in $M_{1,n}$. In particular, $r_{2,n}\geq 1$ by
property $(\star )$, as desired.

Let $\widehat{M}_{2,n}=\frac{1}{r_{2,n}}(\widetilde{M}_{1,n}-T_{2,n})$. Thus,
$\overline{\B }(1)$ contains a simple closed curve in $\widehat{M}_{2,n}$ that is
homotopically non-trivial in $\widehat{M}_{2,n}$. Also define
\begin{equation}
\label{M2n}
M_{2,n}=\frac{1}{r_{2,n}}(M_{1,n}-T_{2,n}),\quad B_{1,n}=\frac{1}{r_{2,n}}(\B (R_1)-T_{2,n}).
\end{equation}
Clearly, $\widehat{M}_{2,n}$ is homeomorphic
to $\widetilde{M}_{1,n}$ and has simpler topology than
$M_{2,n}$, in the sense that $\widetilde{M}_{1,n}$ has less generators for its
first homology group than $M_{2,n}$; the simplification of the topology of $M_{2,n}$
giving $\widehat{M}_{2,n}$ as a replacement of a subdomain by disks, only occurs inside the ball $B_{1,n}$.

\subsection{Controlling the limit of the rescaled surfaces in the second stage.}
\begin{lemma}
\label{lemma4}
Let $C\subset \R^3$ be any compact set.  Then, for $n$ large, the ball $B_{1,n}$
is disjoint from~$C$.
\end{lemma}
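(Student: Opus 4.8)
The plan is to reduce matters to showing that the \emph{center} of $B_{1,n}$ escapes to infinity. Since $B_{1,n}=\B\bigl(-\tfrac{1}{r_{2,n}}T_{2,n},\,\tfrac{R_1}{r_{2,n}}\bigr)$ and $r_{2,n}\geq 1$, its radius $R_1/r_{2,n}$ is at most $R_1$, so it suffices to prove $|T_{2,n}|/r_{2,n}\to\infty$. Arguing by contradiction and passing to a subsequence, suppose $B_{1,n}\cap C\neq\varnothing$ for some fixed compact set $C$; then all the $B_{1,n}$ lie in a fixed closed ball $\overline{\B}(\rho_0)$, which is equivalent to the inequality $R_1+|T_{2,n}|\leq\rho_0\,r_{2,n}$. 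Consequently the closed ball $\overline{\B}(T_{2,n},r_{2,n})$ --- which by the definition of $r_{2,n}$ meets $\widetilde{M}_{1,n}$ in a set containing a simple closed curve $\Gamma_n$ that is homotopically non-trivial in $\widetilde{M}_{1,n}$ --- is contained in $\overline{\B}\bigl((\rho_0+1)\,r_{2,n}\bigr)$.

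The key geometric fact is that the first-stage surgery has essentially destroyed the topology of $\widetilde{L}_1$, and therefore of the surfaces $\widetilde{M}_{1,n}$ on compact sets. By constructions (E1) and (E2), $\widetilde{L}_1$ is $\varepsilon$-close in the $C^2$-norm on compact subsets of $\R^3$ to a helicoid (in case (C1)) or to a finite union of parallel planes (in case (C2)); in either case $\widetilde{L}_1\cap\overline{\B}(\rho)$ is a finite union of disks for every $\rho>0$. Since, outside $\overline{\B}(R_1)$, $\widetilde{M}_{1,n}$ coincides with $M_{1,n}$ and $\widetilde{L}_1$ coincides with $L_1$, while $M_{1,n}\to L_1$ smoothly with multiplicity one, and since inside $\overline{\B}(R_1)$ the surface $\widetilde{M}_{1,n}$ is $C^2$-close to $\widetilde{L}_1$, it follows that $\widetilde{M}_{1,n}\to\widetilde{L}_1$ smoothly with multiplicity one on compact subsets of $\R^3$. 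Hence for each $\rho>0$ and all $n$ large, $\widetilde{M}_{1,n}\cap\overline{\B}(\rho)$ is a finite union of disks, so every simple closed curve of $\widetilde{M}_{1,n}$ lying in $\overline{\B}(\rho)$ is null-homotopic in $\widetilde{M}_{1,n}$. In particular, if the sequence $\{r_{2,n}\}_n$ were bounded, $\Gamma_n$ would lie in a fixed ball for all $n$, contradicting its non-triviality; so the lemma holds unless $r_{2,n}\to\infty$ along the subsequence.

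The main obstacle is ruling out the case $r_{2,n}\to\infty$. Here I would rescale, setting $N_n:=\tfrac{1}{r_{2,n}}\widetilde{M}_{1,n}$. These surfaces have genus at most $g$, are minimal outside the ball $\overline{\B}(R_1/r_{2,n})$, whose radius tends to $0$, and --- by the definition of $r_{2,n}$ --- every open ball of radius $1$ meets $N_n$ only in components in which every loop bounds a disk in $N_n$, whereas $N_n$ carries a homotopically non-trivial loop $\tfrac{1}{r_{2,n}}\Gamma_n$ contained in $\overline{\B}(\rho_0+1)$. A chord--arc argument in the spirit of the proof of Lemma~\ref{lemma3} (using Theorem~0.5 of~\cite{cm35} together with the fact that, by property $(\star)$, the $\widetilde{M}_{1,n}$ are uniformly locally simply connected at scale one away from $\overline{\B}(R_1)$) shows that $\{N_n\}_n$ has locally positive injectivity radius on $\R^3-\{0\}$, so Theorem~\ref{structurethm} applies to $\{N_n\}_n$ with $W=\{0\}$ and yields a limit minimal lamination $\mathcal{L}'$ of $\R^3-\{0\}$ whose closure $\overline{\mathcal{L}'}$ is a minimal lamination of $\R^3$. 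Since $\widetilde{M}_{1,n}$ is $\varepsilon$-close to a helicoid on compact sets and $M_{1,n}$ has a divergent number of parallel ends, the part of $N_n$ near the origin consists of rescaled, highly sheeted pieces developing a limiting parking garage column over a vertical line through the origin; this is incompatible with the non-flat-leaf alternative of item~\ref{it5} of Theorem~\ref{structurethm}, in which the convergence would be smooth, of multiplicity one, with empty singular set. Therefore $\overline{\mathcal{L}'}$ must be a foliation of $\R^3$ by horizontal planes carrying a limiting parking garage structure; but then, for $n$ large, $N_n$ meets $\overline{\B}(\rho_0+1)$ only in simply connected multivalued graphs around the column(s) and in almost-flat disks away from them, so $\tfrac{1}{r_{2,n}}\Gamma_n$ bounds a disk in $N_n$, contradicting its non-triviality. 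Hence $r_{2,n}$ cannot tend to $\infty$, and the proof is complete. The genuinely delicate part of this last step is verifying that Theorem~\ref{structurethm} applies to the merely asymptotically minimal sequence $\{N_n\}_n$ and that its limit is forced to be a parking garage structure over a column through the origin.
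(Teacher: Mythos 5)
Your first reduction (the case of bounded $r_{2,n}$, i.e.\ radii of $B_{1,n}$ bounded away from zero) is essentially the paper's first paragraph and is fine. The gaps are all in the main case $r_{2,n}\to\infty$, and the step you yourself flag as ``delicate'' is in fact where the proof lives. First, the claim that $\{N_n\}_n$ has locally positive injectivity radius in $\R^3-\{0\}$ is not justified by what you cite: property $(\star)$ is a unit-scale statement about $M_{1,n}$ in the \emph{old} coordinates, and after dividing by $r_{2,n}\to\infty$ it only gives simple connectivity at scale $1/r_{2,n}\to 0$, which is vacuous; likewise the chord-arc bound of Theorem~0.5 of~\cite{cm35} is used in Lemma~\ref{lemma3} for a different purpose and does not yield local simple connectedness here. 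What the definition of $r_{2,n}$ gives after rescaling is only that loops in unit balls bound disks \emph{somewhere} in the surgered surface -- possibly passing through the collapsing surgery ball -- and converting this into locally positive injectivity radius away from the collapse point is exactly the content of the paper's Assertion~\ref{ass3.7}: a small homotopically non-trivial curve in $M_{2,n}$ near $q\neq p$ bounds a disk in $\widehat{M}_{2,n}$, that disk must enter $B_{1,n}$ by the convex hull property, and the minimal piece in between would have boundary in two small, far-apart spheres, which a catenoid barrier and the maximum principle forbid. Second, you apply Theorem~\ref{structurethm} to the non-minimal surfaces $N_n$; the paper avoids this entirely by running the limit argument on the genuinely minimal surfaces $M_{2,n}\cap[\overline{\B}(n)-\B(p,1/n)]$ with $W=\{p\}$, using the surgered surfaces only for the homotopical bookkeeping.

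Beyond that, two further steps are asserted rather than proved. You exclude the non-flat-leaf alternative by claiming a parking garage column is ``developing over a vertical line through the origin'' -- but that presupposes the very singular structure you are trying to establish, and in case (C2) there is no helicoidal picture at all; the paper's Assertion~\ref{ass3.8} rules out a non-flat leaf by the monotonicity formula at $p$ combined with multiplicity-one convergence (an area-ratio contradiction), which requires minimality of $M_{2,n}$ near $p$ and so again cannot be run on your $N_n$. Finally, your closing step -- ``a parking garage structure forces $N_n\cap\overline{\B}(\rho_0+1)$ to be simply connected'' -- is false in the two-column case: two oppositely handed columns model the Riemann examples, whose fixed-size balls contain homotopically non-trivial connection loops between the columns. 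The paper must (and does) exclude two columns by repeating the flux argument of Lemma~\ref{lemma3}, and handles the single-column case with the double-spiral/main-component analysis after first isotoping the non-trivial curve $\Gamma_n$ off a fixed neighborhood $\B(p,1/2)$ of the collapse point. As written, your argument is missing the injectivity-radius lemma, the justification for applying the structure theorem, the monotonicity argument, and the two-column exclusion, each of which is an essential ingredient.
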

\begin{proof}
Suppose to the contrary, that after
passing to a subsequence, every $B_{1,n}$ intersects
a compact set $C\subset \R^3$. We first
show that the radii of $B_{1,n}$ go to zero as $n\to \infty $. If not,
and again after taking a subsequence, we can assume that the radius of
$B_{1,n}$ is larger than some $\ve >0$ for every $n\in \N $.
This condition together with the fact that the distance from
$\overline{\B }(1)$ to $B_{1,n}$ is bounded independently of $n$, imply the following
two properties:
\begin{enumerate}[I.]
\item The change of scale (\ref{M2n}) between $M_{1,n}$ and $M_{2,n}$ is essentially
the identity (in the sense that the ratio $1/r_{2,n}$ appearing in (\ref{M2n}) is bounded
from above and below by positive constants independently of $n$, and the translational
part $T_{2,n}$ of (\ref{M2n}) is bounded from above independently of $n$). Observe
that $\wh{M}_{1,n}$ and $\wh{M}_{2,n}$ are related by the same change of scale as
the one in (\ref{M2n}) between $M_{1,n}$ and $M_{2,n}$.
\item There exists $r_0>0$ such that the open ball $B_{1,n}'$ concentric with $B_{1,n}$
of radius $r_0$, contains $\overline{\B }(1)$ for every $n$.
\end{enumerate}
Since $\{ M_{1,n}\} _n$ converges smoothly on arbitrarily large compact subsets of
$\R^3$ to $L_1$ and outside $\B (R_1)$, $L_1$ consists of its annular ends
(here we have possibly applied Lemma~\ref{ass2.5}), we conclude
from properties I, II above that for $n$ large, $B_{1,n}'$ intersects $\widehat{M}_{2,n}$ in disks. This last property
contradicts that $\overline{\B }(1)$ contains a closed curve that is homotopically
 non-trivial in $\widehat{M}_{2,n}$. This contradiction shows that
the radii of the balls $B_{1,n}$ tend to zero as $n\to \infty $, provided that these
balls intersect $C$.

By the previous paragraph, after taking a subsequence we can assume that the
sequence of balls $\{ B_{1,n}\} _n$ converges to a point $p\in C$. To proceed with the proof
of Lemma~\ref{lemma4}, we need the following assertion.
\begin{assertion}
\label{ass3.7}
$\{ M_{2,n}\} _n$ is locally simply connected in $\R^3\setminus \{ p\} $.
\end{assertion}
\begin{proof}
Fix a point $q\in
\R^3\setminus \{ p\} $. Then we can write $|p-q|=d\ve $ for $d\geq 10$, $\ve >0$.
Reasoning by
contradiction, suppose that for $\ve $ arbitrarily
small, for $n$ large, we find a simple closed
curve $\G _n\subset M_{2,n}\cap \B (q,\ve )$ that is homotopically non-trivial in
$M_{2,n}$. Since $\ve $ can be assumed to be less than 1, then $\G _n$ must bound
a disk $D_n$ in $\widehat{M}_{2,n}$. $D_n$ cannot be contained in
$M_{2,n}$ since $\G _n$ is homotopically non-trivial in $M_{2,n}$. Thus,
$D_n$ is not minimal and  must intersect $B_{1,n}$.
Note that $D_n\setminus B_{1,n}$ contains a compact, connected, minimal
planar domain whose boundary intersects each of the spheres $\partial \B (q,\ve )$ and
$\partial B_{1,n}$. An elementary application of the
maximum principle for minimal surfaces shows that there is
no connected minimal surface whose boundary is contained in two
such spheres (pass a suitable catenoid between
the closed balls $\overline{\B }(q,\ve )$ and
$\overline{B_{1,n}}$). Thus, $\{ M_{2,n}\} _n$
is locally simply connected in $\R^3\setminus \{ p\}$.
\end{proof}
We next continue with the proof of Lemma~\ref{lemma4}. Consider the sequence of
compact minimal surfaces with boundary
\[
\left\{ M_{2,n} \cap [\overline{\B }(n) \setminus  \B (p,1/n)] \right\}_n.
\]
As we have already observed, this sequence has locally positive injectivity
radius in $\R^3 \setminus  \{p \}$. By Theorem~\ref{structurethm} applied
to this sequence with $W=\{ p\} $,
there a 
minimal lamination ${\cal L}$ of $\R^3 \setminus  \{p \}$ and a closed subset $S({\cal L})\subset
{\cal L}$ such that $\{ M_{2,n} \cap [\overline{\B }(n) \setminus  \B (p,1/n)] \}_n$
converges $C^{\a }$ (for all $\a \in (0,1)$) to ${\cal L}$ outside the singular
set of convergence $S({\cal L})$. Furthermore, the closure $\overline{\cal L}$
of ${\cal L}$ in $\R^3$ has the structure of a minimal lamination of $\R^3$.

\begin{assertion}
\label{ass3.8}
 $\overline{\cal L}$ consists entirely of planes.
\end{assertion}
\begin{proof}
Reasoning by contradiction,
assume that
$\overline{\lc}$ contains a non-flat leaf.
By item~\ref{it5} of Theorem~\ref{structurethm}, $S({\cal L})=\varnothing$ and
$\overline{\cal L}$ consists of a single leaf $L$, which is
a properly embedded minimal surface in $\R^3$ with genus at most $g$.
Furthermore, the convergence of portions of the surfaces
$M_{2,n} \cap [\overline{\B }(n) \setminus  \B (p,1/n)]$ to $L$ is
of multiplicity 1.

Since for all fixed $\ve > 0$ and for $n$
large the area of $M_{2,n} \cap \B (p, \ve)$
is greater than $\frac{3}{2} \pi \ve^2$ (by the
monotonicity of area formula) and $S({\cal L})=\varnothing$,
then we deduce that $p\in L$. As $L$ is
embedded, the area of $L\cap \B (p,\ve )$
divided by $\ve ^2$ tends to $\pi $ as $\ve \to 0$,
which contradicts that the $M_{2,n}$ converge
smoothly to $L$ in $\B (p,\ve )\setminus \{ p\} $ with multiplicity 1.
This completes the proof of the assertion.
\end{proof}

By Assertion~\ref{ass3.8}, $\overline{\cal L}$ consists entirely of planes, which
after a rotation in $\R^3$ can be assumed
to be horizontal.

Since ${\widehat{M}_{2,n}}\setminus \B (p, 1/4)$ is minimal for
$n$ large and every simple closed curve in
${\widehat{M}_{2,n}}\cap\B (p, 1/4) $
is the boundary of a disk
in ${\widehat{M}_{2,n}}$, we claim that the components of
${\widehat{M}_{2,n}}\cap\B (p,1/4) $
are disks in $\widehat{M}_{2,n}$ when $n$ is large.
To see this claim holds, for $n$ large choose a component 
$\Delta$ of ${\widehat{M}_{2,n}}\cap\B (p,1/4) $
and notice that because $\frac{1}{4}<1$, then $\Delta$  lies in a disk $\cD$ 
in ${\widehat{M}_{2,n}}$
such that the  intersection of $\cD$ with $\rth \setminus B_{1,n}$ is minimal. Hence,
by the convex hull property applied to $\cD\cap [\rth \setminus \B (p, 1/4) ]$,
$\Delta\subset \cD\subset\B (p, 1/4) $. Since  $\widehat{M}_{2,n}\cap B_{1,n}$ consists of
disks, then $\Delta$ is a disk, which proves the claim.

It follows that
${\widehat{M}_{2,n}} \cap \overline{\B }(1)$
contains a homotopically non-trivial simple closed curve,
such that  after an isotopy of such a curve in
$\widehat{M}_{2,n}$, produces another  simple closed curve
$\Gamma_n\subset {\widehat{M}_{2,n}}\cap \overline{\B }(3/2)$
which is homotopically non-trivial in ${\widehat{M}_{2,n}}$ and disjoint from ${\B} (p, 1/4)$.

\begin{assertion}
\label{ass3.9}
The singular set of convergence $S(\lc)$ of the
$M_{2,n} \cap [\overline{\B }(n) \setminus  \B (p,1/n)] $ to ${\cal L}$ is non-empty and intersects $\B(3)$.
\end{assertion}
\begin{proof}
Arguing by contradiction, suppose $S(\lc)\cap \B(3)=\varnothing$. Note that
$\B (p,1/4)$ cannot intersect both of the spheres $\partial \B(3/2)$, $\partial \B(2)$;
in what follows we will assume that  $\B (p,1/4)\cap
\partial \B(3/2)=\varnothing$,
as the argument in the other case is similar.
Since all leaves
of $\overline{\cal L}$ are horizontal planes,
$S(\lc)\cap \overline{\B}(2)=\varnothing $
and that the surfaces
$M_{2,n}\cap [\overline{\B}(3/2)\setminus
\B (p,1/4)]$ are compact,
then for $n$ large, each component of
$M_{2,n}\cap [\overline{\B}(3/2)\setminus
\B (p,1/4)]$ that intersects
$\partial \B (5/4)$ is an almost-horizontal
graph over its projection to the $(x_1, x_2)$-plane and is
either a disk with boundary
in $\partial \B (3/2)$ or a planar domain with one boundary component in $\partial \B (3/2) $
and its other boundary components in $  \partial
\B (p,1/4)$
(here we may assume $\partial \B (p, 1/4)$
is transverse to every such minimal surface).
In particular, for $n$ large, every component of
$\widehat{M}_{2,n} \cap \overline{\B }(3/2)$
is a disk, which contradicts the existence of the curve
$\Gamma_n$.
Thus, $S(\lc)\cap \B(3) \neq \varnothing$.
\end{proof}

By Assertion~\ref{ass3.9} and item~2 of Theorem~\ref{structurethm},
$\overline{\cal L}$ is a foliation of $\R^3$ by (horizontal)
planes and the convergence of the
$M_{2,n} \cap [\overline{\B }(n) \setminus  \B (p,1/n)]$ to $\overline{\cal L}$
has the structure of a horizontal
limiting parking garage structure with one or two columns (vertical lines)
as singular set of convergence.
By Assertion~\ref{ass3.9}, $\overline{S(\lc)}$ intersects
$\B (3)$.

The proof of Lemma~\ref{lemma3} applies to show that
$\overline{S(\lc)}$ does not contain two components (observe that the presence
of the point $p$ does not affect the argument of the proof of Lemma~\ref{lemma3},
as this argument can be done in a horizontal slab far from $p$).
Hence, $\overline{S(\lc)}$ consists of a single
 line.

We now check that the possibility that $\overline{S(\lc)}$
is a single line also leads to a contradiction, which will finish the proof of
Lemma~\ref{lemma4}. Define $\ve >0$ by
\[
\ve =\left\{ \begin{array}{ll}
\min \{ \frac{1}{4},\frac{1}{2}d_{\R^3}(p,\overline{S({\cal L})}) \} & \mbox{if }p\notin \overline{S(\lc)},
\\
\frac{1}{2}& \mbox{if }p\in \overline{S(\lc)}.
\end{array}\right.
\]
Choose $R\geq 3$ sufficiently large so that $p\in \B(R/2)$.
Recall from the paragraph just before Assertion~\ref{ass3.9} that we found
a simple closed curve $\Gamma_n\subset \widehat{M}_{2,n}\cap \overline{\B }(3/2)$
which is homotopically non-trivial in $\widehat{M}_{2,n}$ and disjoint from
$\overline{\B }(p,1/4)$ (in particular, $\G_n\cap \overline{B_{1,n}}=\varnothing$
for $n$ large).
For $n$ large, each of the surfaces $M_{2,n}\cap [\overline{\B }(R) \setminus  \B (p, \ve)]$ contains
a main planar domain component $C_n$, which contains the curve $\Gamma_n$ and
a long, connected, double spiral curve on $\partial \B (R)$.
 The component $C_n$ intersects $\partial \B (p, \ve)$ in either a double spiral curve
when $p \in \overline{S(\lc)}$ or in a large number of
almost-horizontal closed curves when $p \notin S(\lc)$.
It follows that $\widehat{M}_{2,n} \cap \overline{\B }(R)$
consists entirely of disks, which contradicts
the existence of $\Gamma_n$. This contradiction completes the proof of
 Lemma~\ref{lemma4}.
\end{proof}

\begin{lemma}
\label{lemma3.10}
The sequence $\{ M_{2,n}\} _n$ is locally simply connected in
$\R^3$.
\end{lemma}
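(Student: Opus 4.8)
The plan is to argue by contradiction, following the pattern already established in the proofs of Lemmas~\ref{lemma2} and~\ref{lemma3}, but now working with the sequence $\{M_{2,n}\}_n$ and using Lemma~\ref{lemma4} as the crucial new input. Suppose $\{M_{2,n}\}_n$ is \emph{not} locally simply connected in $\R^3$. Then there is a point $q\in\R^3$ and a radius going to zero (or a fixed small radius) such that, after passing to a subsequence, $M_{2,n}$ contains a simple closed curve $\Gamma_n$ in a small ball around $q$ that is homotopically non-trivial in $M_{2,n}$. By Lemma~\ref{lemma4}, for $n$ large the ball $B_{1,n}$ (the region where the surgery simplification occurred) is disjoint from any fixed compact set, hence disjoint from a neighborhood of $q$; therefore $\Gamma_n$ lies in the part of $M_{2,n}$ that agrees with $\widehat{M}_{2,n}$, and so $\Gamma_n$ is also homotopically non-trivial in $\widehat{M}_{2,n}$. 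The first step is thus to reduce the failure of the local-simply-connected property for $\{M_{2,n}\}_n$ to a failure for $\{\widehat{M}_{2,n}\}_n$ at a point $q\neq p$ (the point $p$ of Lemma~\ref{lemma4}), or possibly at $p$ itself, handled separately.

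Next I would apply Theorem~\ref{structurethm} to the sequence $\{\widehat{M}_{2,n}\cap[\ov{\B}(n)-\B(q,1/n)]\}_n$ (with $W=\{q\}$, or $W=\{p,q\}$ if $q$ is near $p$), which by the monotonicity-of-area argument used in Assertion~\ref{ass3.8} forces every leaf of the limit lamination $\ov{\cL}$ to be a plane: a non-flat leaf would have to pass through $q$ by the area bound at $q$, contradicting multiplicity-one smooth convergence away from $q$ in a punctured ball. Once $\ov{\cL}$ is a foliation by (horizontal, after rotation) planes, the singular set $S(\cL)$ is non-empty — otherwise the curve $\Gamma_n$, lying in an almost-flat graphical piece, would bound a disk, exactly as in Assertion~\ref{ass3.9}. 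Then item~2 of Theorem~\ref{structurethm} gives a horizontal limiting parking garage structure with one or two vertical columns, and the arguments already carried out in the proof of Lemma~\ref{lemma3} apply verbatim: the two-column case is excluded because, working in a horizontal slab far from $p$ and $q$, one produces a stable surface with the same flux contradiction as before (via the Riemann-minimal-example comparison), and the one-column case is excluded because a curve homotopically non-trivial on $\widehat{M}_{2,n}$ and contained in a ball of bounded radius would have to meet a line of $S(\cL)$ in two points, contradicting the chord-arc bound of Theorem~0.5 in~\cite{cm35} exactly as in Lemma~\ref{lemma3}.

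The main obstacle I expect is the bookkeeping needed to transport the non-trivial curve $\Gamma_n$ from $M_{2,n}$ to $\widehat{M}_{2,n}$ cleanly and to verify that it stays homotopically non-trivial \emph{and} can be arranged (after an isotopy in $\widehat{M}_{2,n}$) to sit in a controlled ball that avoids both $p$ and the columns of the parking garage structure, so that the one- and two-column exclusion arguments from Lemma~\ref{lemma3} genuinely apply. This is precisely where Lemma~\ref{lemma4} does the heavy lifting: because $B_{1,n}$ escapes every compact set, the modification region is irrelevant on any fixed scale, so $\widehat{M}_{2,n}$ and $M_{2,n}$ are indistinguishable near $q$ and the homotopy class of $\Gamma_n$ is unaffected. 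A secondary subtlety is the case $q=p$ (or $q$ converging to $p$ along with the balls), which should be absorbed by taking $W=\{p\}$ from the start and noting, as in the proof of Lemma~\ref{lemma4}, that the presence of the single puncture $p$ does not interfere with the slab arguments. Modulo these points, the proof is a repackaging of the machinery of Lemmas~\ref{lemma2},~\ref{lemma3} and~\ref{lemma4}, and I would present it as such, referring back to those proofs rather than repeating the flux and chord-arc computations.
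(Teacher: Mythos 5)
There is a genuine gap, and it occurs at the very first step of your reduction. You claim that because $B_{1,n}$ escapes every compact set (Lemma~\ref{lemma4}), the curve $\G_n\subset M_{2,n}\cap \B(q,\ve)$ "is also homotopically non-trivial in $\widehat{M}_{2,n}$". This inference is invalid: homotopic triviality is a global property, and in fact the opposite is true by construction. The scale $r_{2,n}\geq 1$ of the second stage was chosen precisely so that every simple closed curve of $\widehat{M}_{2,n}$ contained in a ball of radius $1$ bounds a disk in $\widehat{M}_{2,n}$; since $\ve<1$, your curve $\G_n$ \emph{does} bound a disk $D_n\subset \widehat{M}_{2,n}$, even though it is non-trivial in $M_{2,n}$. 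Consequently the whole second half of your argument (re-running Theorem~\ref{structurethm}, Assertions~\ref{ass3.8}--\ref{ass3.9} and the one/two-column exclusions of Lemma~\ref{lemma3} for $\widehat{M}_{2,n}$) is built on a false premise, and it is also delicate for an independent reason: $\widehat{M}_{2,n}$ is not minimal inside $B_{1,n}$, so the structure theorem cannot be applied to it without the very reduction you are trying to establish.

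The correct use of Lemma~\ref{lemma4} is the reverse of what you propose, and it makes the proof much shorter (this is the paper's argument, modeled on Assertion~\ref{ass3.7}): since $\G_n$ is non-trivial in $M_{2,n}$ but bounds the disk $D_n$ in $\widehat{M}_{2,n}$, and $M_{2,n}$, $\widehat{M}_{2,n}$ coincide outside $B_{1,n}$, the disk $D_n$ must intersect $B_{1,n}$. Hence $D_n-B_{1,n}$ contains a compact, connected \emph{minimal} planar domain in $M_{2,n}$ whose boundary meets both $\partial\B(q,\ve)$ and $\partial B_{1,n}$. But the radius of $B_{1,n}$ is at most $R_1$ (because $r_{2,n}\geq 1$, see the paragraph before (\ref{M2n})), while by Lemma~\ref{lemma4} the ball $B_{1,n}$ leaves every compact set; once the two balls are far apart, the maximum principle (sliding a suitable catenoid between them, exactly as in Assertion~\ref{ass3.7}) forbids such a connected minimal surface with boundary in the union of the two balls. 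This contradiction proves the lemma without any appeal to limit laminations, parking garage structures, flux, or chord-arc bounds.
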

\begin{proof}
The proof of this lemma is similar to the proof of Assertion~\ref{ass3.7}.
The failure of $\{ M_{2,n}\} _n$ to be locally simply connected at a point $q\in \R^3$
implies for $n$ large the existence of a
disk $D_n\subset \widehat{M}_{2,n}$ whose boundary curve
$\G _n\subset M_{2,n}$ is contained in a ball $\B (q,\ve )$ of small radius $\ve \in (0,1)$
($\G _n$ is homotopically non-trivial in $M_{2,n}$), and such that $D_n$ intersects
$B_{1,n}$.

Since $r_{2,n}\geq 1$ as proven in the paragraph just before (\ref{M2n}),
then the radius of $B_{1,n}$ is less than or equal to $R_1$. As
$B_{1,n}$ leaves every compact set for $n$ large, we deduce from the maximum
principle that there is no connected minimal
surface with boundary contained in $\B (q,\ve )\cap B_{1,n}$
when the distance between the balls
$\B (q,\ve )$, $B_{1,n}$ is sufficiently large. This contradicts the existence of the
minimal planar domain $D_n\setminus B_{1,n}$, and completes the proof of the lemma.
\end{proof}

Applying Theorem~\ref{structurethm} to the sequence $\{ M_{2,n}\} _n$ with
$W=\varnothing$, we conclude that there exists a minimal lamination
 ${\cal L}_2$ of $\R^3$ and a closed subset $S({\cal L}_2)\subset {\cal L}_2$ such that
after passing to a subsequence, the $M_{2,n}$  converge in $\R^3\setminus S({\cal L}_2)$
 to ${\cal L}_2$. Now our previous arguments in Lemmas~\ref{lemanew},
\ref{lemma2} and \ref{lemma3} apply without modifications and complete the proof
of the following proposition.

\begin{proposition}
\label{propos3.11}
After passing to a subsequence, the $M_{2,n}$
converge smoothly with multiplicity $1$ to a
connected, properly embedded minimal surface $L_2\subset \R^3$
that lies in one of the cases
(C1) or (C2) stated at the end of Section~\ref{limit1stage}.
\end{proposition}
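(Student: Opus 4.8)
The plan is to run, for the rescaled sequence $\{M_{2,n}\}_n$, exactly the same chain of arguments used in the first stage, namely the analogs of Lemmas~\ref{lemanew}, \ref{lemma2} and \ref{lemma3}. The first thing to check is that Theorem~\ref{structurethm} is applicable with $W=\mbox{\O }$: by Lemma~\ref{lemma3.10} the sequence $\{M_{2,n}\}_n$ is locally simply connected (equivalently, has locally positive injectivity radius) in all of $\R^3$, and each $M_{2,n}$ is a rescaling and translation of the original surface $M(n)$, hence a connected, properly embedded minimal surface without boundary and of genus $g$. Therefore, after passing to a subsequence and composing with a fixed rotation, the $M_{2,n}$ converge in $C^{\a}$ on compact subsets of $\R^3-S(\cL_2)$ to a minimal lamination $\cL_2$ of $\R^3$ with singular set of convergence $S(\cL_2)$, and $\cL_2$ is a minimal lamination of $\R^3$ in the sense of item~1 of Theorem~\ref{structurethm}.

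The one genuinely new point, and the step I expect to be the main obstacle, is to re-establish in the new scale the topological marker that played the role of properties $(\star)$ and $(\star \star)$ in the first stage. I would argue as follows. By Lemma~\ref{lemma4}, the balls $B_{1,n}$ eventually leave every compact subset of $\R^3$, so for any fixed compact $C\subset \R^3$ one has $M_{2,n}\cap C=\widehat{M}_{2,n}\cap C$ for all $n$ large. Since $\overline{\B }(1)$ contains a simple closed curve that is homotopically non-trivial in $\widehat{M}_{2,n}$, and the surgery of the first stage that produces $\widehat{M}_{2,n}$ from $M_{2,n}$ only deletes homology generators and only inside $B_{1,n}$, a loop non-trivial in $\widehat{M}_{2,n}$ and disjoint from $B_{1,n}$ remains non-trivial in $M_{2,n}$; a standard argument then yields, for $n$ large, a piecewise-smooth simple closed curve $\beta_2(n)\subset M_{2,n}\cap \overline{\B }(1)$ that does not bound a disk in $M_{2,n}$. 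Moreover, using the convex hull property (valid for $M_{2,n}$ wherever it is minimal) together with the fact that every ball of radius $1$ meets $\widehat{M}_{2,n}$ in curves bounding disks in $\widehat{M}_{2,n}$, one gets that inside any fixed compact region every open unit ball meets $M_{2,n}$ in disk components for $n$ large. This is precisely the local data that drives the three first-stage lemmas.

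With $\beta_2(n)$ in hand, the rest is a verbatim repetition. First, the analog of Lemma~\ref{lemma3} shows $S(\cL_2)=\mbox{\O }$: otherwise item~\ref{itt2} of Theorem~\ref{structurethm} would force $\cL_2$ to be a foliation of $\R^3$ by parallel planes with $S(\cL_2)$ consisting of one or two orthogonal lines, and both possibilities are excluded exactly as in Lemma~\ref{lemma3} --- the single--line case via the injectivity-radius bounds along $\beta_2(n)$, the geodesic-loop construction and the chord-arc estimate of~\cite{cm35}, and the two-line case via the Riemann-minimal-example parking-garage analysis combined with the argument of Lemma~\ref{lemma2}; all of this takes place inside a fixed horizontal slab or solid cylinder, where $M_{2,n}$ coincides with $\widehat{M}_{2,n}$ for $n$ large, so no change is needed. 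Next, the analog of Lemma~\ref{lemanew} shows that $\cL_2$ contains a non-simply-connected, hence non-flat, leaf: if all leaves were simply connected then Theorem~\ref{structurethm}(item~\ref{it5}) together with the uniqueness of the helicoid~\cite{mr7} would make $\cL_2$ a family of planes or a single helicoid, and then the component of $M_{2,n}\cap \overline{\B }(\de )$, $\de \in (1,2)$, containing $\beta_2(n)$ would be an almost-flat disk or a small graph over the limiting helicoid, so $\beta_2(n)$ would bound a disk in $M_{2,n}$, a contradiction. Hence, by item~\ref{it5} of Theorem~\ref{structurethm}, $\cL_2$ consists of a single connected, properly embedded minimal leaf $L_2\subset \R^3$ of genus at most $g$, $\{M_{2,n}\}_n$ converges to $L_2$ smoothly with multiplicity $1$, and exactly one of cases 3(a), 3(b), 3(c) of that theorem holds. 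Finally, the analog of Lemma~\ref{lemma2} rules out case 3(c): the separating-curve construction (the slab $S$ between heights $t_0$ and $t_{2g+2}$, the connected planar domain with $2g+4$ boundary components producing a separating curve $\gamma(i,n)$, the stable least-area surface $\Delta(n)$ in the complementary region, and the Gauss-map/strict-stability argument forcing the ends of $M_{2,n}$ to be horizontal and hence $\gamma(i)$ to have vertical flux, contradicting Theorem~6 in~\cite{mpr3}) is again carried out inside a fixed compact region and goes through unchanged. Therefore $L_2$ lies in case (C1) (one end, positive genus $\le g$, asymptotic to a helicoid) or in case (C2) (finite total curvature, genus $\le g$, at least two ends), which is the assertion of the proposition.
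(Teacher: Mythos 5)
Your proposal is correct and follows essentially the same route as the paper, which proves Proposition~\ref{propos3.11} by applying Theorem~\ref{structurethm} with $W=\mbox{\O}$ (via Lemma~\ref{lemma3.10}) and then rerunning the arguments of Lemmas~\ref{lemanew}, \ref{lemma2} and \ref{lemma3}, the point being that by Lemma~\ref{lemma4} the balls $B_{1,n}$ diverge, so $M_{2,n}$ and $\widehat{M}_{2,n}$ coincide on any fixed compact set and the non-trivial curve in $\widehat{M}_{2,n}\cap\overline{\B}(1)$ serves as the topological marker. Your additional observation that this curve is also homotopically non-trivial in $M_{2,n}$ is correct (a disk in $M_{2,n}$ bounded by a curve outside $B_{1,n}$ can be converted into one in $\widehat{M}_{2,n}$ by capping with the surgery disks), and in any case the first-stage arguments only ever produce disks inside fixed compact regions where the two surfaces agree, so they go through as you describe.
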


\subsection{Surgery in the second stage.}
With the notation of the previous proposition and given  $R_2>0$,
we let $L_2(R_2)=L_2\cap \overline{\B }(R_2)$, where
the radius $R_2$ is chosen large enough so that
$L_2\setminus L_2(R_2)$ consists of annular representatives of the ends of $L_2$.
As we did in replacements (E1), (E2) in Section~\ref{surgery1stage}, we
perform the corresponding
replacement of $L_2(R_2)$ by disks to obtain
a smooth, properly embedded (nonminimal)
surface $\widetilde{L}_2\subset \R^3$.
Since $\{ M_{2,n}\} _n$ converges
smoothly with multiplicity $1$ to $L_2$ and $B_{1,n}$ leaves any
compact set of $\R^3$ for $n$ large enough, then
the sequence $\{ \widehat{M}_{2,n}\} _n$
also converges smoothly with multiplicity~1 to $L_2$.

Finally, replace $\widehat{M}_{2,n}\cap \overline{\B }(R_2)$ in a
similar way to get a new smooth properly embedded surface
$\widetilde{M}_{2,n}\subset \R^3$ which is not
minimal but is $C^2$-close to $\widetilde{L}_2$ in $\overline{\B }(R_2)$.
Note that $\widetilde{M}_{2,n}$ has simpler topology than $M_{2,n}$,
the simplification of topology occurring as two replacements by collections of
disks inside the balls $B_{1,n}$
and $\B (R_2)$. This finishes the second stage in a recursive definition
of properly embedded surfaces obtained as rescalings and disk replacements from the
original surfaces $M(n)$.

\subsection{The inductive process.}
We now proceed inductively to produce the $k$-th stage. After passing to a
subsequence of the original surfaces $M(n)$, we assume that for each $i<k$ the
following properties hold:

\begin{enumerate}[(F1)]
\item There exist largest numbers $r_{i,n}\geq 1$ such that in every open ball
$B\subset \R^3$ of radius $r_{i,n}$, every simple closed curve in $\widetilde{M}_{i-1,n}\cap B$
bounds a disk contained in $\widetilde{M}_{i-1,n}$ (in the case $i=1$
we let $\widetilde{M}_{0,n}$ to be $M(n)$).
\item There exist points $T_{i,n}\in \R^3$ such
that $\widetilde{M}_{i-1,n}\cap \overline{\B }(T_{i,n},r_{i,n})$
contains a simple closed curve that is homotopically non-trivial in $\widetilde{M}_{i-1,n}$.

\item The sequence of surfaces
$M_{i,n}=\frac{1}{r_{i,n}}(M_{i-1,n}-T_{i,n})$  is locally simply connected
in $\R^3$, all being rescaled images of the original surfaces $M(n)$.
\item $\{ M_{i,n}\} _n$ converges smoothly with multiplicity $1$
to a connected, properly embedded minimal surface $L_i\subset \R^3$
that lies in one of the cases (C1) or (C2) stated at the end of Section~\ref{limit1stage}.
\item The surface $\widehat{M}_{i,n}=\frac{1}{r_{i,n}}(\widetilde{M}_{i-1,n}-T_{i,n})$
has simpler topology than $M_{i,n}$: the simplification of the topology of $M_{i,n}$
giving $\widehat{M}_{i,n}$
consists of $i-1$ replacements by collections of disks and these
replacements occur in $i-1$ disjoint balls
that leave each compact set of $\R^3$ as $n\to \infty $.
Furthermore, $M_{i,n},\widehat{M}_{i,n}$
coincide outside such $i-1$ balls.
\item There exists a large number $R_i>0$ such that
$L_i\setminus L_i(R_i)$ consists of annular representatives of the ends of $L_i$.
\item There exists a smooth, properly embedded (nonminimal)
surface $\widetilde{L}_i\subset \R^3$ such that
$\widetilde{L}_i$ coincides with $L_i$ in $\R^3\setminus \B (R_i)$ and
$\widetilde{L}_i\cap \overline{\B }(R_i)$ is either
a disk 
(when $L_i$ is in case (C1)) or a finite number of
almost-flat disks (if $L_i$ is in case (C2)).
\item There exist smooth, properly embedded (nonminimal)
surfaces $\widetilde{M}_{i,n}\subset \R^3$ such that
$\widetilde{M}_{i,n}$ coincides with $\widehat{M}_{i,n}$
in $\R^3\setminus \B (R_i)$ and $\widetilde{M}_{i,n}\cap
\overline{\B }(R_i)$ is arbitrarily $C^2$-close to
$\widetilde{L}_i\cap \overline{\B }(R_i)$.
Note that by (F5) above, the surface
$\widehat{M}_{i,n}\cap \overline{\B }(R_i)$ coincides with
$M_{i,n}\cap \overline{\B }(R_i)$. As a consequence,
$\widetilde{M}_{i,n}$ has simpler topology than
$M_{i,n}$, with the simplification of topology consisting of $i$
replacements by collections of disks,
one of these replacements occurring in $\overline{\B }(R_i)$
and the remaining ones inside $i-1$ balls that
leave each compact set of $\R^3$ as $n\to \infty $. Outside these $i$ balls, $\widetilde{M}_{i,n}$
and $M_{i,n}$ coincide.
\end{enumerate}
We now describe how to define $r_{k,n}, T_{k,n},M_{k,n},L_k,\widehat{M}_{k,n}, R_k,\widetilde{L}_k$ and
$\widetilde{M}_{k,n}$.

We define $r_{k,n}$ to be the largest positive number
such that for every open ball $B\subset \R^3$ of radius
$r_{k,n}$, every simple closed curve in $\widetilde{M}_{k-1,n}\cap B$
bounds a disk in $\widetilde{M}_{k-1,n}$.
Following the arguments in the second paragraph of Section~\ref{rescaling2stage},
one proves that $r_{k,n}\geq 1$. Furthermore,
there exists a closed ball of radius $r_{k,n}$ centered at a
point $T_{k,n}\in \R^3$ whose intersection with
$\widetilde{M}_{k-1,n}$  contains a simple closed curve that
is homotopically non-trivial in $\widetilde{M}_{k-1,n}$.
We denote by
\[
M_{k,n}=\frac{1}{r_{k,n}}(M_{k-1,n}-T_{k,n}),
\quad
\widehat{M}_{k,n}=\frac{1}{r_{k,n}}(\widetilde{M}_{k-1,n}-T_{k,n}).
\]
Hence, $M_{k,n}$ is a rescaled
and translated image of the original surface $M(n)$,
and $\widehat{M}_{k,n}\cap \overline{\B }(1)$ contains a curve
that is homotopically non-trivial
in $\widehat{M}_{k,n}$. Finally, $\widehat{M}_{k,n}$ is obtained
from $M_{k,n}$ after $k-1$ replacements by
collections of disks, one of these replacements occurring inside the ball
\[
B_{k-1,n}=\frac{1}{r_{k,n}}(\B (R_{k-1})-T_{k,n})
\]
and the remaining $k-2$ replacements in pairwise disjoint balls
$\widetilde{B}_{1,n}(k),\ldots ,\widetilde{B}_{k-2,n}(k)$ that are
disjoint from $B_{k-1,n}$ and
where rescaled and translated images of the forming limits
$L_1,\ldots ,L_{k-2}$ are captured (see Figure~\ref{fig5}).
Note that the radius of $B_{k-1,n}$
is $\frac{R_{k-1}}{r_{k,n}}\leq R_{k-1}$, and repeating this
argument we have that the radius
of $\widetilde{B}_{j,n}(k)$ is less than or equal to $R_j$
for each $j=1,\ldots ,k-2$.
\begin{figure}
\begin{center}
\includegraphics[width=8.5cm]{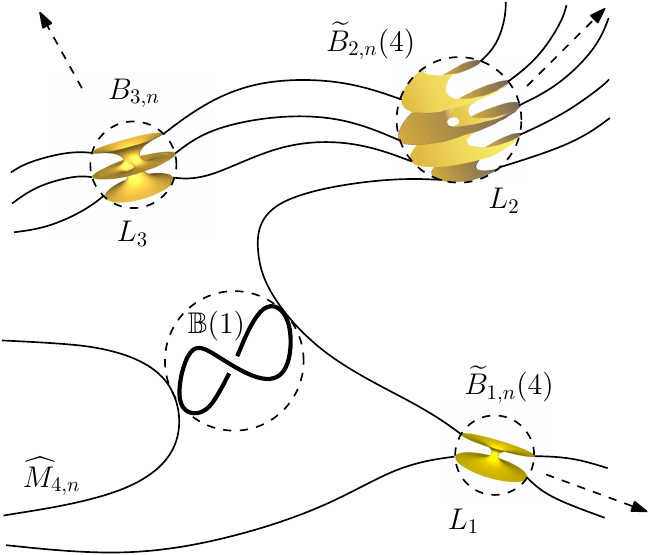}
\caption{The inductive process in the stage $k=4$.} \label{fig5}
\end{center}
\end{figure}

\begin{lemma}
\label{lemma5}
Given any compact set $C\subset \R^3$,  $B_{k-1,n}\cup
\left( \bigcup _{j=1}^{k-2}\widetilde{B}_{j,n}(k)\right) $ is disjoint from~$C$
for $n$ sufficiently large.
\end{lemma}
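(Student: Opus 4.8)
The plan is to closely follow the proof of Lemma~\ref{lemma4}, which is the case $k=2$ of the present statement, with the single ball $B_{1,n}$ there replaced by the finite collection $\{ B_{k-1,n}\} \cup \{ \widetilde{B}_{j,n}(k)\} _{j=1}^{k-2}$ and the single accumulation point $p$ there replaced by a finite set $W\subset \R^3$. Arguing by contradiction, suppose a compact set $C$ meets the displayed union of balls for infinitely many $n$. Since there are only $k-1$ balls, after passing to a subsequence we may assume that each of them either stays inside a fixed compact subset of $\R^3$ for all $n$ or else diverges to infinity, and that at least one of the non-divergent ones meets $C$. Each non-divergent ball has uniformly bounded radius (that of $B_{k-1,n}$ equals $R_{k-1}/r_{k,n}\le R_{k-1}$, and that of $\widetilde{B}_{j,n}(k)$ is at most $R_j$, as recorded just before the statement), so after a further subsequence the center and radius of each non-divergent ball converge; let $W=\{ p_1,\dots ,p_m\}$, with $m\ge 1$, denote the set of limits of the centers of the non-divergent balls.

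The first step is to show that every non-divergent ball has radius tending to $0$. If some non-divergent ball had radius bounded below, then the scaling factor $r_{k,n}$ would be bounded, and hence on a fixed compact region containing $\overline{\B}(1)$ the surface $\widehat{M}_{k,n}$ would be $C^2$-close to a surface all of whose intersections with balls are disks: a bounded rescaling of one of the fixed surfaces $\widetilde{L}_i$ from property (F7) if the translation $T_{k,n}$ stays bounded, or a piece of the (helicoidal or planar) ends of one of the limits $L_i$ if $T_{k,n}\to\infty$. Either way, $\overline{\B}(1)$ would meet $\widehat{M}_{k,n}$ only in disk components for $n$ large, contradicting that $\overline{\B}(1)$ carries a loop homotopically non-trivial in $\widehat{M}_{k,n}$. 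Thus the non-divergent balls shrink to the finite set $W$. From here the argument is that of Lemma~\ref{lemma4}: as in Assertion~\ref{ass3.7}, sliding a catenoid between two disjoint small balls and applying the maximum principle shows that $\{ M_{k,n}\} _n$ is locally simply connected in $\R^3-W$; since each $M_{k,n}$ is a rescaled and translated copy of the original (hence finite-total-curvature) surface $M(n)$, Theorem~\ref{structurethm} applies to $\{ M_{k,n}\cap [\overline{\B}(n)-\bigcup_i\B(p_i,1/n)]\} _n$ with the finite (hence closed and countable) set $W$, producing a minimal lamination $\lc$ of $\R^3-W$ whose closure $\overline{\lc}$ is a minimal lamination of $\R^3$. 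As in Assertion~\ref{ass3.8}, a non-flat leaf of $\overline{\lc}$ is excluded: item~\ref{it5} of Theorem~\ref{structurethm} would then give an empty singular set and multiplicity-one smooth convergence, while the monotonicity-of-area estimate near each $p_i$ would force $p_i$ onto the leaf and contradict the multiplicity-one convergence there. Hence $\overline{\lc}$ is a foliation by parallel planes, taken horizontal. The loop $\Gamma_n\subset \widehat{M}_{k,n}\cap \overline{\B}(2)$ obtained after an isotopy and made disjoint from the shrinking balls then forces, as in Assertion~\ref{ass3.9}, the singular set $S(\lc)$ to be nonempty and to meet a fixed ball, so $\overline{\lc}$ is a horizontal limiting parking garage structure with one or two columns meeting $\B(3)$. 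The two-column case is ruled out exactly as in Lemma~\ref{lemma3} — the finite set $W$ is irrelevant, since the contradiction there is extracted in a horizontal slab disjoint from $W$ — and the one-column case is ruled out by the double-spiral argument at the end of the proof of Lemma~\ref{lemma4}, which forces every component of $\widehat{M}_{k,n}\cap \overline{\B}(3)$ to be a disk, again contradicting the existence of $\Gamma_n$. This contradiction would finish the proof.

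The main difficulty is organizational rather than geometric: one must check that the subsequence extraction splits the $k-1$ balls cleanly into divergent and non-divergent ones; that the non-divergent ones inherit from the inductive hypotheses (F1)--(F8) enough information that, on compact sets, $\widehat{M}_{k,n}$ near each of them is $C^2$-close to a bounded rescaling of the appropriate fixed surface $\widetilde{L}_i$ (or to a piece of the ends of the appropriate $L_i$); and that each localized ingredient of the proof of Lemma~\ref{lemma4} — the catenoid barrier, the area-density estimate excluding non-flat leaves, the parking-garage dichotomy and the exclusion of each of its two cases — survives replacing the single point $p$ by the finite set $W$. All of these are routine once one grants that $W$ is finite and that $M_{k,n}$ is a rescaled copy of a finite-total-curvature surface; as in Lemma~\ref{lemma4}, the only delicate point is the exclusion of a single column, where one needs the almost-parking-garage description of $\widehat{M}_{k,n}$ near the column to be sharp enough that the long double-spiral curve on $\partial\B(3)$ forces $\widehat{M}_{k,n}\cap\overline{\B}(3)$ to consist of disks.
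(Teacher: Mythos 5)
Your overall skeleton (finitely many balls, divergent versus non-divergent ones, radii of the non-divergent ones shrinking to a finite set $W$, then Theorem~\ref{structurethm} with this finite $W$, planes, parking garage structure, and exclusion of one and two columns as in Lemmas~\ref{lemma3} and~\ref{lemma4}) is the same as the paper's. But there is a genuine gap at the step you dismiss as routine: you claim that $\{M_{k,n}\}_n$ is locally simply connected in $\R^3-W$ ``as in Assertion~\ref{ass3.7}, sliding a catenoid between two disjoint small balls.'' The paper explicitly states that the proof of Assertion~\ref{ass3.7} does \emph{not} work at the $k$-th stage and replaces it by a substantially different argument. The reason is that the disk $\widehat{D}_{k,n}(r)\subset \widehat{M}_{k,n}$ bounded by a small homotopically non-trivial loop of $M_{k,n}$ may enter \emph{several} of the $k-1$ balls in $\mathcal{B}=\{B_{k-1,n},\widetilde{B}_{1,n}(k),\ldots,\widetilde{B}_{k-2,n}(k)\}$; the resulting minimal piece $\widehat{D}_{k,n}(r)-\bigcup\mathcal{B}$ then has boundary distributed over $\B(p,r)$ and many spheres, some of which may stay at bounded distance from one another and have radii that do not tend to zero (only the non-divergent balls shrink). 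A connected minimal surface can perfectly well have boundary on two or more such nearby spheres --- the surfaces $M_{k,n}$ themselves do --- so no two-ball catenoid barrier rules this situation out, and the contradiction cannot be extracted this way.

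What the paper does instead, and what is missing from your proposal, is to exploit the known asymptotic geometry of what was captured inside the balls of $\mathcal{B}$ at earlier stages: one glues back to $\Omega_{k,n}(r)=\widehat{D}_{k,n}(r)-\bigcup\mathcal{B}$ the forming helicoids-with-handles coming from (E1)-type replacements to obtain a compact minimal surface $\widehat{\Omega}_{k,n}(r)$, observes via (G1)--(G2) that each boundary curve other than $\g_{k,n}(r)$ lies on the boundary of a ball where an (E2)-type replacement was made and is close to a rescaled intersection of a large sphere with a catenoidal or planar end, so that the normal lines of $\widehat{\Omega}_{k,n}(r)$ along these boundary curves lie in finitely many arbitrarily small disks $\mathcal{D}_n\subset\Pe^2$, and then runs a furthest-point, tilted-plane maximum-principle argument (the limit direction $F$ of $q_n-p$ must and cannot lie in $\mathcal{D}_\infty$) to reach the contradiction. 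The same new argument is also what proves Assertion~\ref{ass3.14} (that the components of $\widehat{M}_{k,n}$ in the small balls around the $p_i$ are disks), which you need in order to isotope your curve $\Gamma_n$ off the shrinking balls. Without this replacement for the catenoid-barrier step, your proof of the crucial local simple connectedness in $\R^3-W$ does not go through, even though the remainder of your outline coincides with the paper's.
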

\begin{proof}
Assume that the lemma fails for some compact set $C$.
The arguments in the first paragraph of the proof of Lemma~\ref{lemma4}
can be adapted to show that if after choosing a subsequence, some of the balls in the collection
$\mathcal{B}=\{B_{k-1,n},\widetilde{B}_{1,n}(k),\ldots ,\widetilde{B}_{k-2,n}(k)\}$ stay at bounded distance from the
origin as $n$ goes to $\infty $, then their corresponding radii go to zero.
This implies that, after extracting a subsequence,
$\bigcup \mathcal{B}=B_{k-1,n}\cup \left( \bigcup _{j=1}^{k-2}\widetilde{B}_{j,n}(k)\right) $
has non-empty limit set as $n\to \infty $ being a finite set of points in $\R^3$,
denoted by $\{ p_1,\ldots ,p_l\} $.

\begin{assertion}
$\{ M_{k,n}\} _n$ is locally simply connected in
$\R^3\setminus \{ p_1,\ldots ,p_l\} $.
\end{assertion}
\label{as313}
\begin{proof}
The proof of the similar
fact in Assertion~\ref{ass3.7} does not work
in this setting, so we will give a different proof.
Arguing by contradiction, we may assume
that there exists a point $p\in \R^3\setminus \{ p_1,\ldots ,p_l\} $ such that for any
$r\in (0,1)$ fixed
and for $n$ sufficiently large, there exists a homotopically non-trivial curve $\g _{k,n}(r)$ in
$M_{k,n}\cap \B (p,r)$. By our normalization, $\g _{k,n}(r)$ bounds a disk $\widehat{D}_{k,n}(r)$
on $\widehat{M}_{k,n}$. Since $\widehat{M}_{k,n}$ and $M_{k,n}$ coincide outside
$\bigcup \mathcal{B}$, we deduce from
the convex hull property that
$\widehat{D}_{k,n}(r)$ must enter some of the balls in $\mathcal{B}$.
Hence, $\widehat{D}_{k,n}(r)$ intersects the boundary of
$\bigcup \mathcal{B}$
in a non-empty collection
${\cal A}$ of curves, each of which is arbitrarily close to a rescaled and translated image
of the intersection of a sphere 
of large radius centered at the origin with
either an embedded minimal end of finite total curvature or with a helicoidal end.
Define
\[
\Omega _{k,n}(r)=\widehat{D}_{k,n}(r)\setminus
 \bigcup \mathcal{B}
\]
which is a planar domain in $M_{k,n}$ whose boundary
consists of $\g _{k,n}(r)$ together with the curves in ${\cal A}$.
Let $\widehat{\Omega }_{k,n}(r)$ be the compact subdomain on $M_{k,n}$ obtained
by gluing to $\Omega _{k,n}(r)$ the forming helicoids with handles whose boundary curves
lie on $\Omega _{k,n}(r)$. Since
 $\widehat{\Omega }_{k,n}(r)$ is a compact minimal surface
with boundary and
$\bigcup \mathcal{B}$
is disjoint from $\B (p,r)$ for $n$ large, then
the convex hull property implies that $\widehat{\Omega }_{k,n}(r)$ has
at least one boundary curve outside $\overline{\B }(p,r)$.
	We will obtain the desired contradiction by
	an argument based on the maximum principle for minimal
	surfaces, applied to $\widehat{\Omega }_{k,n}(r)$ and to
	suitably chosen planes that leave
	$\widehat{\Omega }_{k,n}(r)$ at one side. To find these
	planes, we will first analyze the behavior of
	$\widehat{\Omega }_{k,n}(r)$ near each of its boundary 
components outside $\B(p,r)$. For every component $\G $ of
	$\partial \widehat{\Omega }_{k,n}(r)$ outside
	$\overline{\B }(p,r)$,
the following properties hold.
\begin{enumerate}[(G1)]
\item  $\G $ lies in the boundary of one of the
balls in $\mathcal{B}$,
which we simply denote by $B_{\G }$.
Observe that $\G $ is the intersection of the boundary of $\Omega _{k,n}(r)$ and
the boundary of a compact domain $\Omega'\subset M_{k,n}$ obtained as intersection
of $M_{k,n}$ with the closure of $B_{\G }$. Furthermore, $\Omega'$ is a compact
domain in $M_{k,n}$ where a replacement of type (E2) has been made in a
stage previous to the $k$-stage.

\item There exists a closed neighborhood $U_{\G }$ of $\G $ in
$\widehat{\Omega }_{k,n}(r)$ that lies outside
$B_{\G }$,  an end $\widetilde{E}$ of a vertical catenoid
centered at the origin $\vec{0}\in \R^3 $
or of the plane $\{ x_3=0\} $ and a map $\phi \colon \R^3\to \R^3$, 
which is the composition of a homothety and a rigid motion,
such that $U_{\G }$ can be taken arbitrarily
close to $\phi (E)$ (by taking $n$ large enough and the radii $R_i$, $i<k$, fixed but sufficiently large),
where $E$ is the intersection of
$\widetilde{E}$ with the closed region between two spheres of
large radii centered at $\vec{0}$, so that $\G $
corresponds through $\phi $ to the intersection of $E$
with the inner sphere, see Figure~\ref{fig7}.
\end{enumerate}
\begin{figure}
\begin{center}
\includegraphics[width=7.5cm]{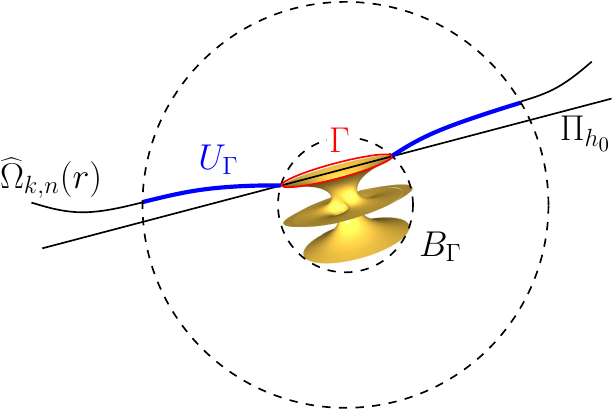}
 \caption{$\widehat{\Omega }_{k,n}(r)$ lies
outside $B_{\G }$ along $\G $.} \label{fig7}
\end{center}
\end{figure}

In particular, the normal vector to $\widehat{\Omega }_{k,n}(r)$ along $\G $ can be assumed to
lie in an arbitrarily small open disk in the unit
sphere, centered at the image by the linear part of
$\phi $ of the limit normal vector to~$\widetilde{E}$.
In the case $\widetilde{E}$ is the end of a catenoid, the
compact subdomain $E$ can be chosen as the intersection of
$\widetilde{E}$ with a  slab of the type
$\{ (x_1,x_2,x_3)\mid a\leq x_3\leq b\} $, for $0<a<b$ large.
For $a$ fixed and $b>a$ arbitrarily large, the sublinearity of the
growth of the third coordinate function on
$\widetilde{E}$ implies that if a plane $\Pi _1\subset \R^3$
touches $E$ at a point of $\{ x_3=a\} $ and leaves $E$ at
one side of $\Pi _1$, then $\Pi _1$ must be arbitrarily close
to the horizontal in terms of $a$. Therefore:
	\begin{enumerate}[($\diamondsuit$)]
	\item If for $n$ large, a plane $\Pi \subset \R^3$ touches $\widehat{\Omega }_{k,n}(r)$
	along $\G $ and leaves $\widehat{\Omega }_{k,n}(r)$
	at one side of $\Pi $, then the orthogonal direction to $\Pi $
	must be arbitrarily close to $\pm \phi (0,0,1)$.
	A similar conclusion holds when $\widetilde{E}$ is the end of the plane $\{ x_3=0\} $.
\end{enumerate}

We now explain how to choose the plane for which
we will use property $(\diamondsuit)$ to find a contradiction based on the maximum principle.
Since the number of components of
$\partial \widehat{\Omega }_{k,n}(r)\setminus \overline{\B }(p,r)$
is bounded independently of $n$, we deduce that the normal
lines to $\widehat{\Omega }_{k,n}(r)$ along
its boundary curves other than $\g _{k,n}(r)$ lie in a collection ${\cal D}_n$
of arbitrarily small (for $n$ sufficiently large) open disks in the projective plane
$\Pe ^2$, the number of which is
bounded independently of $n$. Furthermore, as $n\to \infty $, the ${\cal D}_n$ converge
after extracting a subsequence to a finite set ${\cal D}_{\infty }\subset \Pe ^2$.
Given $n$ large, consider a furthest point $q_n$
in $\partial \widehat{\Omega }_{k,n}(r)$ to $p$. Let $F\in \Pe ^2$ be
the line obtained as a (subsequential) limit of the directions of the position vectors $q_n-p$.
 Suppose for the moment that $F$ does not lie in
${\cal D}_{\infty }$. Consider the family of planes $\{ \Pi _h\subset \R^3\mid h\in \R \} $
orthogonal to $F$, $h$ being the oriented distance to $p$.
If $h_0(n)=\mbox{dist}(p,q_n)$, then
the fact that $q_n$ is a furthest point
	in $\partial \widehat{\Omega }_{k,n}(r)$ to $p$ implies that
$\widehat{\Omega }_{k,n}(r)$ lies entirely at
one side of the plane $\Pi _{h_0(n)}$. Taking $n\to \infty $,
property $(\diamondsuit)$ implies that $F$ lies in ${\cal D}_{\infty }$, which is
contrary to our assumption. Hence $F\in {\cal D}_{\infty }$. Fix
$n$ large and consider the plane $\Pi _{h_0(n)}$ which
has $\widehat{\Omega }_{k,n}(r)$ at one side, with $q_n\in
 \Pi _{h_0(n)}\cap \widehat{\Omega }_{k,n}(r)$. Fix $\ve >0$ small and tilt slightly
 $\Pi _{h_0(n)+\ve }$ to produce a new plane $\Pi'$
  so that $\widehat{\Omega }_{k,n}(r)$ still lies at one side of
 $\Pi'$ and $\Pi'\cap \widehat{\Omega }_{k,n}(r)=\varnothing$. Observe that the
 normal direction to $\Pi'$ can be assumed not to lie in ${\cal D}_{\infty }$.
 Move $\Pi'$ towards $ \widehat{\Omega }_{k,n}(r)$ by parallel planes
 until we find a first contact point (which must lie on $\partial
 \widehat{\Omega }_{k,n}(r)$ by the maximum principle).
 Now we can repeat the argument based on property $(\diamondsuit)$ as before
 with $\Pi'$ instead of $\Pi _{h_0(n)}$ to find
 a contradiction if $n$ is sufficiently large.
This proves the assertion.
\end{proof}

Consider for each $i=1,\ldots ,l$ a ball $B_i$ centered at $p_i$, whose radius is much smaller
than the minimum distance between pairs of distinct points $p_j,p_h$ with $j,h\in \{ 1,\ldots ,l
\} $. To proceed with the proof of Lemma~\ref{lemma5}, we need the following property.
\begin{assertion}
\label{ass3.14}
For each $i$, the components of $\widehat{M}_{k,n}$ in $B_i$ are disks for $n$ large.
\end{assertion}
\begin{proof}
To see this, suppose that for a given $i=1,\ldots ,l$,
there exists a simple closed curve $\g _{k,n}\subset \widehat{M}_{k,n}\cap B_i$
that does not bound a disk on $\widehat{M}_{k,n}\cap B_i$.
As the radius of $B_i$ can be assumed to be less than~$1$, then
$\g _{k,n}$ must bound a disk in $\widehat{M}_{k,n}$.
Now the same proof of Assertion~3.13 
gives a contradiction,
thereby proving Assertion~\ref{ass3.14}.
\end{proof}
We next continue with the proof of Lemma~\ref{lemma5}. For
the fixed value $k$, define the sequence of compact minimal
surfaces
\begin{equation}
\label{eq:seq}
\left\{ M_{k,n} \cap \left[ \overline{\B }(n) \setminus \bigcup _{i=1}^l\B (p_i,1/n)\right]
\right\} _n,
\end{equation}
which has locally positive injectivity radius in $A=\R^3\setminus \{ p_1,\ldots, p_l\} $ by
Assertion~3.13. 
Applying Theorem~\ref{structurethm} to this sequence with $W=
\{ p_1,\ldots ,p_l\} $, we deduce that there exists a
minimal lamination ${\cal L}$ of $\R^3\setminus \{ p_1,\ldots ,p_l\} $ and a closed subset
$S({\cal L})\subset {\cal L}$ such that the sequence defined by (\ref{eq:seq})
converges $C^{\a }$ for all $\a \in (0,1)$ to ${\cal L}$, outside of the singular set
of convergence $S({\cal L})$.
Furthermore, the closure $\overline{\cal L}$ of ${\cal L}$
has the structure of a minimal lamination of $\R^3$.
 In fact, the arguments in the
proof of Assertions~\ref{ass3.8} and~\ref{ass3.9} can be easily adapted to
demonstrate that $\overline{\cal L}$ consists of planes and $S({\cal L})\neq \varnothing$.
Therefore, items~2, 3 of Theorem~\ref{structurethm} imply that $\overline{\cal L}$
is a foliation of $\R^3$ by planes and the convergence of the sequence defined in (\ref{eq:seq})
to $\overline{\cal L}$ has the structure of a limit minimal parking garage structure
with $\overline{S({\cal L})}$ consisting of one or two columns.
Once we know this, the arguments in the proof of Assertion~\ref{ass3.9}
ensure that $\overline{S({\cal L})}$ intersects $\B (3)$.

 Again, the arguments in the
 proof of Lemma~\ref{lemma3} remain valid and imply that $\overline{S(\lc)}$
 consists of a single line. The final step in the proof
 of Lemma~\ref{lemma5} is to discard the case that $\overline{S(\lc)}$ is one line.
 Consider the positive number
 \[
 \ve = \frac{1}{3} \min [\{1\} \cup \{ d_{\R^3}(p_i, p_j)\}_{i \neq j}].
 \]
 Arguing as in the last paragraph of the proof of Assertion~\ref{ass3.9} we conclude that
 there exists a simple closed curve $\Gamma_n\subset \widehat{M}_{k,n}
 \cap \overline{\B }(2)$
which is homotopically non-trivial in $\widehat{M}_{k,n}$ and
disjoint from $\cup _{i=1}^l\overline{\B }(p_i,\ve )$.
Furthermore, for $n$ large each of the surfaces
$M_{k,n}\cap [\overline{\B }(3) \setminus \cup _{i=1}^l \B (p_i,\ve)]$ contains
a main planar domain component $C_n$,
which contains the curve $\G_n$. For $n$ large, $C_n$
intersects $\partial \B (3)$ in a
long, connected, double spiral curve, and it intersects $\partial \B (p_i, \ve)$
in either a double spiral curve  when
$p_i\in \overline{S(\lc)}$ or in a large number of
almost-horizontal closed curves when $p_i \notin \overline{S(\lc)}$, $i=1,\ldots ,l$.
Thus, $\widehat{M}_{k,n} \cap \overline{\B }(3)$
consists of disks, which contradicts the
 existence of $\Gamma_n$. Now the proof of
Lemma~\ref{lemma5} is complete.
\end{proof}

Straightforward modifications in the proof of Lemmas~\ref{lemma3.10}
and Proposition~\ref{propos3.11} give the following lemma, whose proof
we leave to the reader.
\begin{lemma}
The sequence $\{ M_{k,n}\} _n$ is locally simply connected in $\R^3$,
and after passing to a subsequence,
it converges with multiplicity $1$ to a minimal lamination ${\cal L}_k$ of $\R^3$
consisting of a single leaf $L_k$ which satisfies the properties in cases {(C1)} or {(C2)}.
\end{lemma}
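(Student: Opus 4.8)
The plan is to mirror, essentially verbatim, the structure of the first two stages of the construction. The statement has two parts: first, that $\{M_{k,n}\}_n$ is locally simply connected in $\R^3$, and second, that after passing to a subsequence it converges with multiplicity one to a single-leaf lamination $L_k$ in case (C1) or (C2). The key point is that Lemma~\ref{lemma5} has already done the hard work: it guarantees that the collection of balls $B_{k-1,n}\cup\left(\bigcup_{j=1}^{k-2}\widetilde B_{j,n}(k)\right)$ eventually leaves every compact set of $\R^3$. Once we have this, the rescaled surfaces $M_{k,n}$ and the surgered surfaces $\widehat M_{k,n}$ agree on every fixed compact set for $n$ large, so all local minimal-surface arguments apply to $M_{k,n}$ directly.

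First I would prove local simple connectivity in $\R^3$. This is exactly the argument of Lemma~\ref{lemma3.10}: suppose $\{M_{k,n}\}_n$ fails to be locally simply connected at some $q\in\R^3$. Then for $n$ large there is a homotopically non-trivial simple closed curve $\G_n\subset M_{k,n}\cap\B(q,\ve)$ with $\ve\in(0,1)$; by our normalization of $r_{k,n}$, $\G_n$ bounds a disk $D_n$ in $\widehat M_{k,n}$. Since $\widehat M_{k,n}$ and $M_{k,n}$ coincide outside $\bigcup\mathcal B$, the convex hull property forces $D_n$ to enter one of the balls of $\mathcal B$. But by Lemma~\ref{lemma5}, every ball of $\mathcal B$ has diameter bounded by $2\max_{i<k}R_i$ and leaves every compact set as $n\to\infty$, so for $n$ large the distance between $\B(q,\ve)$ and $\bigcup\mathcal B$ is arbitrarily large, and then $D_n-\bigcup\mathcal B$ contains a compact connected minimal planar domain with boundary in two spheres that are arbitrarily far apart -- impossible by the maximum principle (slide a suitable catenoid between the two spheres). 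This contradiction proves local simple connectivity.

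Next I would apply Theorem~\ref{structurethm} to $\{M_{k,n}\}_n$ with $W=\mbox{\O}$, producing (after passing to a subsequence, and after a fixed rotation) a minimal lamination ${\cal L}_k$ of $\R^3$ with singular set of convergence $S({\cal L}_k)$. Then I would rerun, without change, the chain of arguments from Lemmas~\ref{lemanew}, \ref{lemma2} and \ref{lemma3}: property $(\star)$ is replaced by the fact that $\widehat M_{k,n}\cap\overline\B(1)$ contains a homotopically non-trivial curve and $\widehat M_{k,n}=M_{k,n}$ on compact sets for $n$ large; Lemma~\ref{lemanew}'s argument (uniqueness of the helicoid plus the convex hull property applied in $\overline\B(\de)$, $\de\in(1,2)$) shows ${\cal L}_k$ has a non-simply-connected leaf if $S({\cal L}_k)=\mbox{\O}$; Lemma~\ref{lemma2}'s argument (the stable barrier surface $\Delta(n)$, the flux computation, and Theorem~6 in~\cite{mpr3}) rules out the two-limit-end case; and Lemma~\ref{lemma3}'s argument (the injectivity-radius bound on $\beta(n)$, the geodesic loop, the chord-arc estimate from Theorem~0.5 of~\cite{cm35}, and the two-column parking-garage analysis) rules out $S({\cal L}_k)\neq\mbox{\O}$. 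The conclusion is that $\{M_{k,n}\}_n$ converges smoothly with multiplicity one to a single properly embedded minimal leaf $L_k$ of genus at most $g$, which by item~\ref{it5} of Theorem~\ref{structurethm} together with the exclusion of case~3(c) lies in case (C1) or (C2).

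The only genuine point requiring care -- and the one I expect to be the main obstacle -- is verifying that the surgery balls behave well enough that the compact-set arguments transfer from $\widehat M_{k,n}$ to $M_{k,n}$; but this is precisely the content of Lemma~\ref{lemma5}, which has already been established, so for the present lemma the work is purely a matter of checking that each previous argument is insensitive to the replacement of a single surgery ball by the finite collection $\mathcal B$. Since the radii of the balls in $\mathcal B$ are uniformly bounded (each is at most $R_j$ for the appropriate $j<k$) and they all escape to infinity, every maximum-principle and convex-hull argument used above goes through verbatim, and I would simply remark this and leave the routine verification to the reader, exactly as the text does for the analogous step.
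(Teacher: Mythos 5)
Your overall architecture is the one the paper intends (it disposes of this lemma by saying that Lemma~\ref{lemma3.10} and Proposition~\ref{propos3.11} carry over), and the second half of your proposal --- apply Theorem~\ref{structurethm} with $W=\mbox{\rm \O}$ and rerun the arguments of Lemmas~\ref{lemanew}, \ref{lemma2} and \ref{lemma3}, using that $\widehat{M}_{k,n}$ and $M_{k,n}$ agree on any fixed compact set for $n$ large --- is exactly the proof of Proposition~\ref{propos3.11}. The gap is in your local simple connectivity step. You claim that the component of $D_n-\bigcup\mathcal{B}$ containing $\G_n$ is a connected compact minimal surface ``with boundary in two spheres that are arbitrarily far apart,'' and then slide a catenoid between them. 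But at stage $k$ the collection $\mathcal{B}$ consists of $k-1$ balls, and the disk $D_n\subset\widehat{M}_{k,n}$ may enter several of them; although each ball has radius at most $\max_{j<k}R_j$ and all of them diverge (Lemma~\ref{lemma5}), they need not be close to one another. The boundary of the relevant component is then spread over $\overline{\B}(q,\ve)$ together with several widely separated small spheres, its convex hull is no longer a thin capsule, and the two-sphere catenoid barrier argument of Assertion~\ref{ass3.7}/Lemma~\ref{lemma3.10} simply does not apply to that configuration. The paper is explicit about this failure mode: in the proof of Lemma~\ref{lemma5} the authors state that the proof of Assertion~\ref{ass3.7} ``does not work in this setting'' precisely because more than one ball is involved, and they give a different argument.

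The correct ``straightforward modification'' is therefore not the catenoid slide but the argument of the first assertion inside the proof of Lemma~\ref{lemma5}: glue back the forming limit pieces to obtain the compact minimal surface $\widehat{\Omega}_{k,n}(r)$, observe via (G1)--(G2) that its boundary components other than the small curve near $q$ lie on the surgery spheres and have normal directions confined to a bounded number of arbitrarily small disks ${\cal D}_n\subset\Pe^2$, and then run the furthest-point/tilted-plane maximum-principle argument. Nothing in that proof uses that the balls stay bounded (it only uses that $\bigcup\mathcal{B}$ is eventually disjoint from $\B(q,r)$, which Lemma~\ref{lemma5} gives for free when the balls diverge), so it applies verbatim here and yields local simple connectivity of $\{M_{k,n}\}_n$ in all of $\R^3$. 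With that substitution your proposal becomes a complete proof along the paper's lines; as written, the maximum-principle step would fail whenever $D_n$ visits two surgery balls lying in different directions far from $q$.
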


We can continue this inductive process indefinitely and using a diagonal
subsequence, we will obtain an infinite sequence $\{ L_k\} _{k\in \N}$
of non-simply connected, properly embedded minimal surfaces,
each one satisfying one of the properties
{(C1)} or {(C2)}. For each fixed $k\in \N$,
$L_k\cap \B (R_k)$ is the limit under homotheties and translations of compact domains
of $M(n)$ that are contained in balls $\widehat{B}_{n,k}$. Moreover,
$\widehat{B}_{n,k}$ is disjoint from $\widehat{B}_{n,k'}$ for $k\neq k'$.

\subsection{The final contradiction.}
Since the genus of $M(n)$ is fixed and finite, for all $k$ sufficiently large
the surface $L_k$ has genus zero (in particular, $L_k$ cannot satisfy case (C1),
see also the caption of Figure~\ref{fig4}).
By the L\' opez-Ros Theorem~\cite{lor1}, $L_k$ is a
catenoid. Fix $k_0$ such that for every $k\geq k_0$, $L_k$ is a catenoid.
Given $k\geq k_0$, the convergence to $L_k$
of suitable homotheties and translations of compact domains of the $M(n)$ contained in the
balls $\widehat{B}_{n,k}$ appearing in the last paragraph ensures that
there exists an integer $n(k)$ such that for all
$n\geq n(k)$, we may assume that $M(n)\cap \widehat{B}_{n,k_0},\ldots ,
M(n)\cap \widehat{B}_{n,k}$ are
close to $k-k_0+1$ catenoids. For these $k,n$ and for any integer $k'$ with $k_0\leq k'\leq k$,
let $\G _{n,k'}$ be the unique simple closed geodesic in $M(n)\cap \widehat{B}_{n,k'}$ which, after
scaling and translation, converges to the waist circle of $L_{k'}$ as $n\to \infty $.

\begin{lemma}
\label{lemma6} For any $m\in \N$, there exists $k\geq k_0$ such that at least $m$ of
the simple closed curves $\G _{n(k),k'}\subset M(n(k))\cap \widehat{B}_{n(k),k'}$
separate $M(n(k))$ where $k_0\leq k'
\leq k$.
\end{lemma}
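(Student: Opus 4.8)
The plan is to count how many of the waist circles $\G_{n,k'}$ must be separating, and to show this number grows without bound as $k$ grows. The key topological input is the same elementary fact used in the proof of Lemma~\ref{lemma2}: in a compact surface of genus $g$ with empty boundary, a connected planar subdomain with $N$ boundary components has at least $N-2g$ of those boundary components separating the ambient surface. I will apply this not to a single slab domain but to a carefully chosen connected planar subdomain of $M(n)$ whose boundary is made up of (copies of) the curves $\G_{n,k'}$.

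First I would fix $m$ and choose $k$ large; recall from the previous subsection that for $k\geq k_0$ every limit $L_{k'}$ with $k_0\leq k'\leq k$ is a catenoid, and that the balls $\widehat B_{n,k'}$ are pairwise disjoint and escape every compact set. Inside $M(n)$, for $n\geq n(k)$, each domain $M(n)\cap\widehat B_{n,k'}$ is $C^2$-close to a large compact piece of a catenoid; in particular it is an annulus, and its waist geodesic $\G_{n,k'}$ cuts it into two sub-annuli. Now remove from $M(n)$ all $k-k_0+1$ of these catenoidal necks, cutting along the curves $\G_{n,k'}$. What remains is a compact surface (with boundary the $2(k-k_0+1)$ copies of the $\G_{n,k'}$, plus whatever was already there); I would pass to the connected component $P_n$ that carries the ``outermost'' boundary copies, i.e.\ the one obtained from $M(n)$ by deleting, for each $k'$, the sub-annulus of $M(n)\cap\widehat B_{n,k'}$ lying on a prescribed side of $\G_{n,k'}$. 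After compactifying the ends of $M(n)$ (attaching a point at each end, which does not change whether a curve separates), $P_n$ is a compact planar subdomain — planar because all its topology, namely the generators coming from the necks, has been cut away, and the genus $g$ of $M(n)$ was already entirely absorbed into the forming handles at earlier stages so that the complement of the necks in $M(n)$ has genus $0$ in the relevant range (this uses (C1)/(C2) and the fact that by the choice of $k_0$ no further genus is lost).

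Applying the elementary fact to $P_n$ as a connected planar subdomain of the compact genus-$g$ surface $\overline{M(n)}$ with at least $k-k_0+1$ boundary curves among the $\G_{n(k),k'}$, I get at least $(k-k_0+1)-2g$ of these curves separating $\overline{M(n)}$. A curve separates $\overline{M(n)}$ if and only if it separates $M(n)$, since compactifying ends does not affect separation of an embedded closed curve disjoint from the ends. Choosing $k$ with $k-k_0+1-2g\geq m$ gives the conclusion.

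The main obstacle will be the bookkeeping in the previous paragraph: making precise that the surface $P_n$ obtained by excising the catenoidal necks really is \emph{planar}, i.e.\ that no genus survives. This requires tracking through the inductive construction that the only topology of $M(n)$ not yet ``used up'' by handles formed at stages $1,\dots,k$ — and hence available to be detected by the neck circles $\G_{n,k'}$ — is that of the necks themselves, together with the fixed genus $g$, which is accounted for by the $2g$ correction term. Concretely, one argues that for $k\geq k_0$ the limit $L_{k'}$ is a catenoid (genus $0$, case (C2)), so the surgery that produced $\widehat M_{k',n}$ from $M_{k',n}$ only replaced planar-type subdomains by disks and did not destroy any handle; hence at the last stage considered, the union of the $\G_{n,k'}$ is precisely a maximal collection of pairwise disjoint, pairwise non-homologous, non-separating-or-separating curves detecting the ``extra'' ends, and the count $N-2g$ is exactly right. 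Once this is in place, the rest is the standard separation argument and choosing $k$ large.
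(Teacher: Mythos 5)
Your counting strategy cannot be made to work, and the obstacle you flag at the end is not a bookkeeping issue but the whole content of the lemma. The elementary fact you quote requires a \emph{connected planar} subdomain having the curves among its boundary components, and nothing in the construction provides such a domain: after excising the catenoidal necks, the waist circles $\G_{n,k'}$ need not all lie on the boundary of a single component, the components need not be planar, and your claim that the genus of $M(n)$ has been ``absorbed into the forming handles at earlier stages'' is unjustified (the inductive process only shows that the limits $L_k$ are eventually catenoids; the handles of $M(n)$ need not be localized inside any of the balls $\widehat{B}_{n,k'}$, nor near the earlier forming limits). Worse, no purely topological count of this kind can prove the lemma: a genus-$g$ surface with $g\geq 1$ admits arbitrarily many pairwise disjoint simple closed curves, none of which separate --- for instance neck circles arranged in a single ``cycle'' through one handle, so that the adjacency graph of the cut pieces is one long circuit; this is compatible with fixed genus and with each curve being the waist of an almost perfectly formed catenoid, and it is exactly the configuration that must be excluded. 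In Lemma~\ref{lemma2} the planar domain $\Sigma_S(n,R_2)$ to which the $2g+4$-boundary-curve count is applied exists because of the geometry of the two-limit-end surface (a slab region of a planar domain); there is no analogous source of planarity here, where the balls $\widehat{B}_{n,k'}$ are widely separated and at different scales.

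This is why the paper argues quite differently: assuming the lemma fails, the fixed genus yields seven non-separating waist circles bounding consecutive annuli in the conformal compactification (properties (H1)--(H2)), and Assertion~\ref{ass3.17} shows $\G_1,\G_2,\G_3$ bound disks in the same complement of $M(n(k))$. The contradiction is then genuinely geometric: one solves Plateau problems with $\partial W$ as barrier to produce least-area surfaces $\Sigma(1,2)$, $\Sigma(2,3)$ and $\Sigma(1,3)$ spanning pairs of these curves, uses curvature estimates for stable surfaces, the area-minimizing property and the Gauss map/stability argument (Assertion~\ref{ass3.18}) to show each is a one-ended almost-horizontal graph, and finally observes that the graphical end of $\widetilde{\Sigma}(1,3)$ would have to lie between two graphical ends of the properly embedded surface $\Sigma$ while being disjoint from it --- impossible. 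To repair your proposal you would have to supply geometric input of this strength to rule out chains of non-separating necks; the homological bookkeeping alone cannot do it.
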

\begin{proof}
If the lemma were to fail, then for any $k\geq k_0$, there would be a bound on the
number of the curves $\G _{n(k),k^\prime}$ that separate $M(n(k))$.
Since the genus of $M(n(k))$ is independent of $k$, for $k$ sufficiently large
there exist seven of these geodesics $\G _{n(k),k^\prime}$
that bound two consecutive annuli in the conformal
compactification $\overline{M}(n(k))$ of $M(n(k))$.  More precisely, we find
$\Lambda_1,\G_1,\Lambda_2,\G _2,\Lambda_3,\G_3,\Lambda_4$, seven of the
 non-separating curves $\G _{n(k),k^\prime}$, so that
\begin{enumerate}[(H1)]
\item $\Lambda_1\cup \Lambda_4$ is the boundary of a
compact  annulus $A(\Lambda_1, \Lambda_4)\subset \ov{M}(n(k))$
that is the union of compact annuli $A(\Lambda_1, \G_1)$,
$A(\G_1, \Lambda_2)$,
$A(\Lambda_2, \G_2)$,
$A(\G_2, \Lambda_3)$,
$A(\Lambda_3, \G_3)$,
$A(\G_3, \Lambda_4)$ that do not intersect in their interiors.
Observe  by the convex hull property, all of these annuli
contain at least 1 end of $M(n(k))$, as does $M(n(k))\setminus A(\Lambda_1,\Lambda_4)$,
see Figure~\ref{fig7new}.
\end{enumerate}

Let
$A(\G_1,\G_2)$, $A(\G_2,\G_3)$ be the related subannuli in $\overline{M}(n(k))$
bounded by $\G _1\cup \G _2$, $\G _2\cup \G _3$, respectively.

Observe that (H1) now implies:
\begin{enumerate}[(H2)]
\item Each of the
three components of $M(n(k))\setminus (\G _1\cup \G _2\cup \G_3)$ contains at
least two ends of $M(n(k))$.
\end{enumerate}
\begin{figure}
\begin{center}
\includegraphics[width=11.5cm]{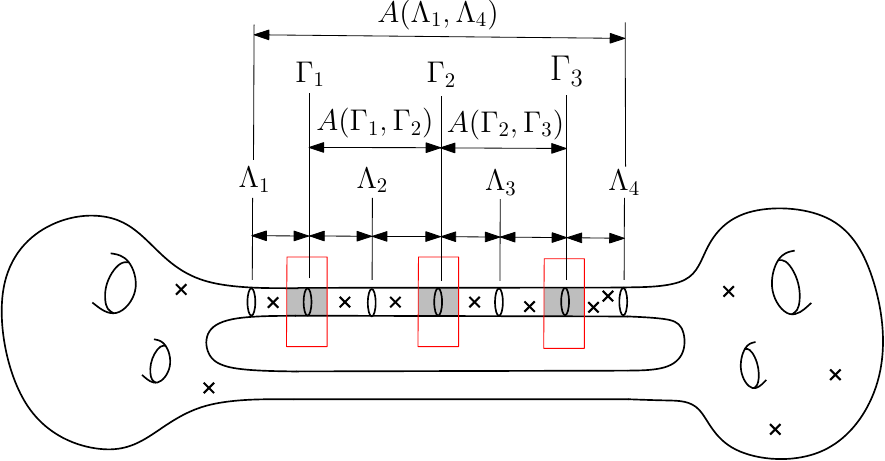}
 \caption{Properties (H1), (H2): Topological representation of $M(n(k))$, with ends represented by
 crosses. When viewed in $\R^3$, the compact portions of $M(n(k))$  enclosed in the red
rectangles represent almost  perfectly formed catenoids, whose almost waist circles
 are the non-separating curves $\G _i$.
 } \label{fig7new}
\end{center}
\end{figure}

 \begin{assertion}
 \label{ass3.17}
  In the above situation,
 $\G _1,\G _2,\G _3$ all bound disks on the same closed complement
of $M(n(k))$ in $\R^3$.
\end{assertion}
\begin{proof}
Suppose that the assertion fails. Then, we may assume without loss of generality that $\G_1$
and $\G_2$ bound disks on opposite sides of $M(n(k))$. Let $W,W_1$ denote
the closures of the two components of $\R^3\setminus M(n(k))$, so that $\G _1$ bounds a disk
$D_1$ in $W_1$ and $\G_2$ bounds another disk $D_2$ in $W$. Observe that
$\G _1$ is not homologous to zero in $W$ (otherwise $\G_1$ would separate
$M(n(k))$, which is contrary to our hypothesis).   Let $\Omega
\subset M(n(k))$ be the (noncompact) planar domain bounded by $\G _1\cup \G_2$.
After a small perturbation of $\Omega \cup D_2$ in $W$ that fixes $\G_1$, we
obtain a new surface $\Sigma $ contained in $W$, such that
$\Sigma \cap M(n(k))=\G _1$. The union of $\Sigma $ together with $D_1$
is a properly embedded surface that intersects
$M(n(k))$ only along $\G _1$.
This implies that $\G_1$ separates $M(n(k))$, which is a contradiction.
\end{proof}

Let $W_1$ be the closure of the component of $\R^3\setminus M(n(k))$ in which
$\G _1,\G _2,\G _3$ all bound disks, which exists by Assertion~\ref{ass3.17}.
Since none of the $\G_1,\G_2,\G_3$ separate
$M(n(k))$, then none of the $\G_1,\G_2,\G_3$
bound properly embedded surfaces in the closure
$W$ of $\R^3\setminus W_1$. As $\G_1\cup \G_2\subset \partial W$ bounds a connected,
 non-compact orientable surface in $W$ (which
is part of $M(n(k))$) and $\partial W$ is a good barrier for
solving Plateau problems in $W$, a standard
argument~\cite{my1,my2} ensures that there exists a
 non-compact, connected, orientable least-area surface
$\Sigma (1,2)\subset W$ with boundary $\partial \Sigma (1,2)=\G _1\cup \G _2$.

\begin{assertion}
\label{ass3.18}
$\Sigma (1,2)$ has just one end, this end has vertical limiting normal vector,
and $\Sigma (1,2)\cap\partial W=\G_1 \cup \G_2$.
\end{assertion}
\begin{proof}
Recall that the simple closed curves $\G _1,\G _2,\G _3$ are the unique closed geodesics in the
intersection of $M(n(k))$ with disjoint balls $B_1,B_2,B_3$ and that $M(n(k))\cap
B_i$ can be assumed to be arbitrarily close to a large region of a catenoid $C_i$ centered
at the center of $B_i$ (and suitably rescaled), $i=1,2,3$.
In order to check that $\Sigma (1,2)$ has exactly one end,
let $X$ be the non-simply connected region of
$B_1\setminus M(n(k))$ that lies between two coaxial cylinders with axis the axis of $C_1$ and radii
$\frac{R}{3},\frac{R}{2}$ where $R$ denotes the radius of $B_1$, see Figure~\ref{fig8}.
\begin{figure}
\begin{center}
\includegraphics[width=6.5cm]{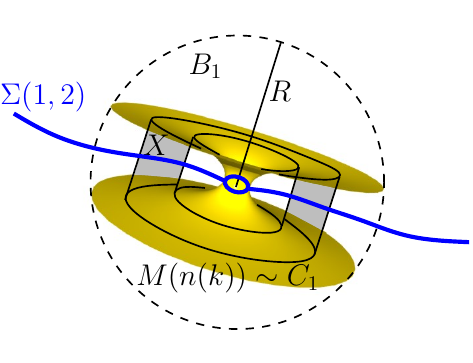}
\caption{The normal line to $\Sigma (1,2)\cap X$ is close to the
limit normal line to $C_1$.} \label{fig8}
\end{center}
\end{figure}

By curvature estimates for stable surfaces, the portion of $\Sigma (1,2)$ contained in $X$
consists of almost-flat graphs parallel to the almost-flat graphs defined by the catenoid
$C$ in the boundary of $X$.  Since the surface $\Sigma (1,2)$ is area-minimizing in
$W$, there is only one such an annular graph. A similar description can be made for
$\Sigma (1,2)$ in the ball $B_2$. After removing the portion of $\Sigma (1,2)$
inside the innermost cylinder in each of these balls, we obtain a connected,
 non-compact, stable minimal surface $\widetilde{\Sigma }(1,2)$ whose Gauss map
$\widetilde{G}\colon \widetilde{\Sigma }(1,2)\to \esf^2$ satisfies that
$\widetilde{G}(\partial \widetilde{\Sigma }(1,2))$ is contained in a small neighborhood of the
limiting normal directions of the corresponding forming catenoids $C_1,C_2$. Since the surface
$\widetilde{\Sigma }(1,2)$ is  stable and connected, it follows that
$\widetilde{G}(\widetilde{\Sigma }(1,2))$ is contained in a small neighborhood $U$
of a point in $\esf^2$
(see a similar argument in the penultimate paragraph of the proof of Lemma~\ref{lemma2}).
In particular, the two forming catenoids in $B_1,B_2$ are
almost-parallel. Since $\widetilde{\Sigma }(1,2)$ lies in the complement of
$M(n(k))$, then the values of $\widetilde{G}$ at the ends of $\widetilde{\Sigma
}(1,2)$ are the North or the South poles, which are contained in $U$.
Thus, the forming
catenoids inside $B_1,B_2$ are approximately vertical and $\widetilde{\Sigma }(1,2)$
is an almost-horizontal graph over its projection to the $(x_1,x_2)$-plane. In
particular, $\Sigma (1,2)$ has exactly one end and this end has vertical limiting normal vector.

Finally, the property that  $\Sigma (1,2)\cap\partial W=\G_1 \cup \G_2$ can be easily
deduced from  the maximum principle and from property (H2) above. This completes
the proof of the assertion.
\end{proof}

Note that $\Sigma (1,2)$ separates $W$ into two regions. Let $W'$ be the closed
complement of $\Sigma (1,2)$ in $W$ that contains $\G_3$ in its boundary. Let
$\G_2'\subset M(n(k))\cap B_2$ be an $\ve $-parallel curve to $\G _2$ in $\partial W'$,
with $\ve >0$ being very small. Since $\G _2'\cup \G _3$ bounds a connected
 non-compact surface in $\partial W'$ (which is part of $M(n(k))$), then
$\G _2'\cup \G _3$ also bounds a connected, non-compact, orientable least-area
surface $\Sigma (2,3)\subset W'$. Note that the arguments in the proof of
Assertion~\ref{ass3.18} apply to give that $\Sigma (2,3)$ intersects $\partial W'$
only along $\G _2'\cup \G _3$, and that
outside $B_2,B_3$,
the surface $\Sigma (2,3)$ is an almost-flat, almost-horizontal
graph over its projection to the $(x_1,x_2)$-plane.

Let $\Sigma $ be the
connected, non-compact, piecewise smooth surface consisting of
\[
\Sigma =\Sigma (1,2)\cup \Sigma
(2,3)\cup D_1\cup D_3\cup A(\G _2,\G _2'),
\]
where for $i=1,3$, $D_i$ is a disk in
$\R^3\setminus W$ bounded by $\G_i$ and $A(\G_2,\G_2')\subset M(n(k))\cap B_2$
is the compact annulus bounded by $\G_2\cup \G_2'$.
Since $\Sigma $ has no boundary and is properly embedded in
$\R^3$, then $\Sigma $ separates $\R^3$ into two open
regions. Let $\Delta (1,3)$ be the connected
component of $M(n(k))\setminus (\G _1\cup \G _3)$
that is disjoint from $\G_2$. Let $W''$ be the connected
component of $\R^3\setminus \Sigma $ that contains $\Delta (1,3)$.
Note that $\Delta (1,3)$ separates $W''$ (this follows because
the piecewise smooth  properly embedded surface
$\Delta (1,3)\cup D_1\cup D_3$ separates $\R^3$, the disks $D_1,D_3$ are contained
in the boundary of $W''$ and $\Delta (1,3)$ is contained in $W''$).
Let $W'''$ be the closed complement
of $\Delta (1,3)$ in $W''$ in which $\G_1$ is not homologous to zero. Let $\G _1',\G
_3' \subset M(n(k))$  be $\ve $-parallel curves to $\G_1,\G _3$
in $\partial W'''$, with $\ve >0$ small.
Since neither $\G_1'$ nor $\G_3'$ separate $M(n(k))$, then
$\G_1'\cup \G _3'$ bounds a connected non-compact, proper
surface in $\partial W'''$ which is part of $M(n(k))$. Thus,
$\G_1'\cup \G _3'$ also bounds a connected, orientable, non-compact,
properly embedded least-area surface
$\Sigma (1,3)\subset W'''$.  As
in Assertion~\ref{ass3.18},
$\Sigma (1,3)$ has exactly one end that is an almost-horizontal graph. The end of
this graph lies between the ends of the two horizontal annular ends of $\Sigma $
since it lies in $W'''\subset W''$, see Figure~\ref{fig6}.
\begin{figure}
\begin{center}
\includegraphics[width=16cm]{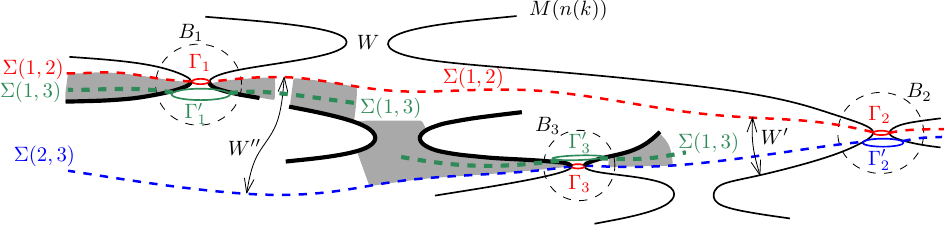}
\end{center}
\par
\vspace{-1cm}
\begin{center}
\includegraphics[width=10cm]{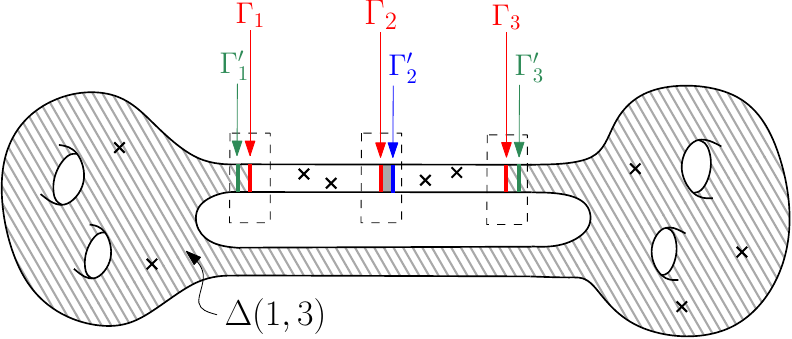}
\caption{Top: Producing a contradiction with three non-separating curves
$\G _1,\G _2,\G _3\subset M(n(k))$. The gray region that contains $\Sigma (1,3)$ is
$W'''$. Bottom: A topological representation of $M(n(k))$, with the curves that appear
in the top figure. The shaded component $\Delta (1,3)$ of $M(n(k))\setminus (\G _1\cup \G _3)$ in the
bottom figure corresponds to the thick black curve in the top figure.
} \label{fig6}
\end{center}
\end{figure}

We now obtain the desired contradiction. Consider the surface
\[
\widetilde{\Sigma}(1,3)=\Sigma (1,3)\cup D_1'\cup D_3',
\]
where $D_i'$ is a disk in $\R^3\setminus W$ bounded
by $\G '_i$, $i=1,3$. The surface $\widetilde{\Sigma }(1,3)$ is properly embedded in $\R^3$
and $\widetilde{\Sigma }(1,3)\cap \Sigma =\varnothing$, hence,
$\Sigma $ must lie on one side of $\widetilde{\Sigma }(1,3)$ in $\R^3$. However,
since the graphical end of $\widetilde{\Sigma }(1,3)$ lies between two graphical
ends of $\Sigma $, we obtain a contradiction that finishes the proof of Lemma~\ref{lemma6}.
\end{proof}

We now complete the proof of Theorem~\ref{thm1}.
By Lemma~\ref{lemma6}, for any $m$ there exists $k\geq k_0$ such that at least
$m$ of the $k-k_0+1$ closed geodesics of the type
$\G _{n(k),k'}\subset M(n(k))\cap \widehat{B}_{n(k),k'}$
separate $M(n(k))$, $k_0\leq k'\leq k$. These $m$
separating curves $\G _{n(k),k'}$ can be assumed to be
arbitrarily close to the waist circles of suitable rescaled,
large compact regions of $m$ disjoint catenoids.
In particular, $M(n(k))$ has non-zero flux vector
along any of these
curves, and the separating property implies that
such flux vectors are all vertical (any separating curve in $M(n(k))$ with non-zero
flux must be homologous to a finite positive number of ends of $M(n(k))$, which have
vertical flux). Therefore, the $m$ forming catenoids inside $M(n(k))$ are all vertical.
Now exchange the geodesics $\G _{n(k),k'}$ by planar
horizontal convex curves $\widetilde{\G}_{n(k),k'}$
in $M(n(k))$, which can be chosen arbitrarily close
to the corresponding $\G _{n(k),k'}$. Since the
genus of $M(n(k))$ is fixed and finite, we can take $m$ large enough so that at
least two of these planar curves, say
$\widetilde{\G}_1,\widetilde{\G}_2$, bound a non-compact planar domain
$\Omega $ inside $M(n(k))$ and bound planar horizontal disks in the same complement
of $M(n(k))$ in $\R^3$. Since $\Omega $ has vertical
catenoidal and/or planar ends, the L\' opez-Ros
deformation~\cite{lor1,ros5} applies to give the desired contradiction. This
finishes the proof of the theorem.

\center{William H. Meeks, III at profmeeks@gmail.com\\
Mathematics Department, University of Massachusetts, Amherst, MA 01003}
\center{Joaqu\'\i n P\'{e}rez at jperez@ugr.es\qquad \qquad Antonio Ros at aros@ugr.es \\
Department of Geometry and Topology and Institute of Mathematics
(IEMath-GR), University of Granada, 18071 Granada, Spain}

\bibliographystyle{plain}

\bibliography{bill}
\end{document}